\documentclass{amsart}

\usepackage{amssymb}
\usepackage{tikz}
\usepackage{verbatim}
\usepackage{hyperref}
\usepackage[english]{babel}
\usepackage{wasysym}
\usepackage[shortlabels]{enumitem}

\usepackage[colorinlistoftodos, textsize=small]{todonotes}
\usepackage{color, soul}
\usepackage{mathtools}
\usepackage{mathrsfs}

\usepackage{thmtools}
\usepackage{thm-restate}

\newtheorem{theorem}{Theorem}[section]
\newtheorem{lemma}[theorem]{Lemma}
\newtheorem{corollary}[theorem]{Corollary}

\theoremstyle{definition}
\newtheorem{definition}[theorem]{Definition}
\newtheorem{notation}[theorem]{Notation}
\newtheorem{example}[theorem]{Example}

\newtheorem{question}[theorem]{Question}
\newtheorem{proposition}[theorem]{Proposition}

\newtheorem{conjecture}[theorem]{Conjecture}

\theoremstyle{remark}
\newtheorem{remark}[theorem]{Remark}
\theoremstyle{remark}

\numberwithin{equation}{section}

\newcommand{\sym}{\operatorname{Sym}}
\newcommand{\pol}{\operatorname{Pol}}
\newcommand{\proj}{\mathscr{P}}

\newcommand{\id}{\operatorname{id}}

\newcommand{\csp}{\operatorname{CSP}}
\newcommand{\pcsp}{\operatorname{PCSP}}
\newcommand{\yes}{\bf{YES}}
\newcommand{\no}{\bf{NO}}
\newcommand{\dom}{\operatorname{Dom}}

\newcommand{\ar}{\operatorname{ar}}

\newcommand{\zf}{\operatorname{ZF}}
\newcommand{\zfc}{\operatorname{ZFC}}
\newcommand{\ac}{\operatorname{AC}}
\newcommand{\acf}{\ac^*(<\!\omega)}
\newcommand{\ufl}{\operatorname{UL}}
\newcommand{\kw}{\operatorname{KW}}
\newcommand{\hh}{H}
\newcommand{\nae}{\operatorname{NAE}}
\newcommand{\val}{\operatorname{val}}
\newcommand{\ex}{\operatorname{ex}}
\newcommand{\power}{\mathcal{P}}
\newcommand{\upower}{\mathcal{U}}
\newcommand{\free}{\mathbf{F}}

\newcommand{\p}{\mathbf{P}}
\newcommand{\np}{\mathbf{NP}}
\newcommand{\npc}{$\mathbf{NP}$-complete}
\newcommand{\nph}{$\mathbf{NP}$-hard}

\newcommand{\fa}{\mathfrak{A}}
\newcommand{\fb}{\mathfrak{B}}
\newcommand{\fc}{\mathfrak{C}}
\newcommand{\fd}{\mathfrak{D}}

\newcommand{\ff}{\mathfrak{F}}
\newcommand{\fg}{\mathfrak{G}}
\newcommand{\fh}{\mathfrak{H}}
\newcommand{\fm}{\mathfrak{M}}

\newcommand{\ii}{\mathfrak{I}}

\newcommand{\fs}{\mathfrak{S}}

\begin{document}

\title{Equivalences of promise compactness principles}
\author{Bertalan Bodor}
\address{HUN-REN Alfréd Rényi Institute of Mathematics, Reáltanoda utca 13-15, H-1053, Budapest}
\email{bodor@renyi.hu}
\thanks{The author has been funded by the European Research Council (Project POCOCOP, ERC Synergy Grant 101071674). Views and opinions expressed are however those of the authors only and do not necessarily reflect those of the European Union or the European Research Council Executive Agency. Neither the European Union nor the granting authority can be held responsible for them.}
\maketitle

\begin{abstract}
	For a pair of finite relational structures $(\fa,\fb)$ such that $\fa$ homomorphically maps to $\fb$ we denote by $K_{(\fa,\fb)}$ the following statement: \emph{for all structures $\mathfrak{I}$ with the same signature as $\fa$ if all finite substructures of $\mathfrak{I}$ homomorphically maps to $\fa$ then $\mathfrak{I}$ homomorphically maps to $\fb$}. 
	
	In this article, we show that if $(\fa,\fb)$ has no Ol\v{s}\'{a}k polymorphism, then $K_{(\fa,\fb)}$ is equivalent to the ultrafilter principle over $\zf$. This includes the statements $K_{(K_3,K_5)}$ and $K_{(H_2,H_c)}$ for all $c\geq 2$ where $K_n$ denotes the clique of size $n$ and $H_k$ denotes the ternary not-all-equal structure on a $k$-element set. This means, for example, that in any $\zf$ model, if every finitely 3-colourable graph can be coloured by 5 colours then all these graphs can in fact be coloured by 3 colours.


\end{abstract}

\section{Introduction}\label{sect:intro}

	For a finite relational structure $\fa$ we write $K_{\fa}$ for the following statement.
	
\begin{itemize}
\item \emph{For every relation structure $\ii$ with the same signature as $\fa$ if all finite\\ substructures of $\ii$ homomorphically map to $\fa$ then so does $\ii$.}
\end{itemize}
	
	We refer to the statement $K_{\fa}$ as the \emph{compactness principle} over $\fa$. Note that all statements $K_{\fa}$ follow from the compactness theorem of propositional logic, and thus they are all theorems of $\zfc$. In fact, we know that the compactness theorem (even for first-order logic), and thus all compactness principles $K_{\fa}$, follow already from the ultrafilter lemma which is strictly weaker than the axiom of choice~\cite{jech1977axiom} over $\zf$. However, different compactness principles have different strengths in choiceless set theory. For instance, by a theorem of L\"{a}uchli~\cite{lauchli1971coloring} we know that $K_{K_3}$ is in fact equivalent to the ultrafilter lemma over $\zf$ meaning that $K_{K_3}$ is (one of) the strongest compactness principles. On the other hand, it is fairly easy to see that $K_{K_2}$ is equivalent to $\ac(2)$, i.e., that axiom of choice for families of 2-element sets which is much weaker than the ultrafilter lemma~\cite{jech1977axiom}. 
	
\subsection{CSPs and compactness principles} For a relational structure $\fa$ the CSP (constraint satisfaction problem) over $\fa$, denoted by $\csp(\fa)$, is the computation problem where the input is any structure $\ii$ with the same signature of $\fa$, and we need to decide whether $\ii$ homomorphically maps to $\fa$. 
As has been observed at many places in the literature, the strength of $K_{\fa}$ is closely related the computation complexity of the CSP over $\fa$; in general the harder the CSP is over $\fa$ the stronger the statement $K_{\fa}$ seems to be~\cite{rorabaugh2017logical,katay2023csp,tardif2025constraint}. For example $\csp(K_3)$, i.e., 3-colourability of graphs, is {\npc} whereas $K_{K_2}$, i.e., 2-colourability is polynomial time decidable. This observation can be formalised using the notion of \emph{primitive positive (or pp-) constructions}: we say that a structure $\fa$ \emph{pp-constructs} a structure $\fb$ if $\fb$ is homomorphically equivalent to some pp-power of $\fa$~\cite{wonderland}. 
One of the most important features of pp-constructions is that they give reductions between complexity of CSPs. More specifically, if $\fa$ pp-constructs $\fb$ then $\csp(\fb)$ polynomial time (in fact LOGSPACE) reduces to $\csp(\fa)$~\cite{wonderland}. Following the terminology of~\cite{brunar2025janus} we say that finite structure $\fa$ is \emph{omniexpressive} if it pp-constructs all finite structures. We know, for instance, that $K_3$ is omniexpressive, and thus omniexpressivity is also equivalent to the pp-constructability of $K_3$. In particular, if $\fa$ pp-constructs $K_3$ then $\csp(\fa)$ is {\npc}. On the other hand, it was shown independently by Bulatov~\cite{BulatovFVConjecture} and Zhuk~\cite{zhuk2020proof} that all CSPs over not omniexpressive finite structures are solvable in polynomial time, thereby confirming the famous dichotomy conjecture by Feder and Vardi~\cite{FederVardi}.

	As for compactness principles some analogous results were obtained in a recent work of Kátay, Tóth, and Vidnyánszky~\cite{katay2023csp}. Here, it has been shown that if $\fa$ pp-constructs $\fb$ then $K_{\fa}$ implies $K_{\fb}$. This implies immediately that all compactness principles over finite omniexpressive structures are equivalent to each other, and thus by the theorem of L\"{a}uchli, they are also equivalent to the ultrafilter lemma. On the other hand, we know if $\fa$ is finite and not omniexpressive then $K_{\fa}$ is strictly weaker than $K_{K_3}$. In fact, the following result is shown in the aforementioned paper.
	
\begin{theorem}[\cite{katay2023csp}, Theorem 2.18]\label{thm:ktv}
	There exists a model of $\zf$ where for a finite structure $\fa$ the compactness principle $K_{\fa}$ holds if and only if $\fa$ is not omniexpressive.
\end{theorem}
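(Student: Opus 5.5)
We want a model of $\zf$ in which $K_{\fa}$ holds exactly for the finite structures $\fa$ that are not omniexpressive. Since only countably many finite structures exist up to isomorphism, one model has to work for all of them at once.

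The plan is to take a permutation/symmetric model in which the ultrafilter lemma fails. In such a model $K_{K_3}$ fails by L\"{a}uchli's theorem. By the result of K\'atay--T\'oth--Vidny\'anszky that pp-constructions transfer compactness principles, $K_{\fa}$ then fails for every omniexpressive $\fa$: otherwise $K_{\fa}$ would imply $K_{K_3}$. The model must still be rich enough in choice that every non-omniexpressive $\fa$ satisfies $K_{\fa}$.

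For that second direction I would use the algebraic characterisation of non-omniexpressivity. By Barto--Kozik and Siggers, $\fa$ is not omniexpressive iff $\fa$ has a cyclic (equivalently, Siggers or Ol\v{s}\'ak) polymorphism. More usefully, by the Bulatov--Zhuk theory, $\fa$ is then solved by a procedure that is ``local'' in a definable way. The strategy is to show that in the chosen model, for any instance $\ii$ all of whose finite substructures map to $\fa$, one can construct a homomorphism $\ii\to\fa$ using only a weak choice principle that holds in the model. The first candidate is choice for well-orderable families of finite sets, or a suitable invariance argument in the permutation model.

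Concretely, I would build a Fraenkel--Mostowski model over a set of atoms carrying a rich homogeneous structure, then transfer to $\zf$ by Jech--Sochor embedding. The atoms form a countable $\omega$-categorical structure, and the supports are finite. Given $\ii$ in the model with finite support $E$, compactness in the ambient $\zfc$ universe gives homomorphisms $\ii\to\fa$, but we need one fixed by the stabiliser of $E$.

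Here cyclic or Ol\v{s}\'ak polymorphisms enter. Averaging a family of homomorphisms over orbits of the finite-index action by applying a symmetric polymorphism yields an invariant homomorphism. A cyclic polymorphism of arity $p$ symmetrises over $\mathbb{Z}_p$-actions. So one chooses the group of automorphisms of the atoms so that stabilisers of finite sets act on the relevant finite data through groups, such as products of cyclic groups of prime orders, that these polymorphisms can average over.

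The omniexpressive direction additionally needs an explicit instance witnessing failure of $K_{K_3}$, which L\"{a}uchli's theorem supplies once the ultrafilter lemma fails. I would make sure the model is chosen, for instance via a group action with nontrivial $\mathbb{Z}_2$-type symmetry, so that the ultrafilter lemma fails.

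The main obstacle is the averaging step. It requires that for every non-omniexpressive $\fa$ the stabiliser symmetries can be ``absorbed'' by polymorphisms of $\fa$, while the same symmetries genuinely obstruct $K_3$-colourings. I would first try to make the group of atoms act so that every orbit structure on finite data is controlled by abelian $p$-groups for large primes $p$. Then the existence of cyclic polymorphisms of all large prime arities (Barto--Kozik) gives invariant solutions, whereas $K_3$, which has no cyclic polymorphisms of any arity greater than $1$, cannot.
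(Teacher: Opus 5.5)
The paper does not prove this statement; it is quoted from \cite{katay2023csp}. Your proposal therefore has to stand on its own. The failure direction is fine. Your central idea for the other direction is also viable: average homomorphisms inside a permutation model using cyclic polymorphisms of all prime arities $p>|A|$ (Barto--Kozik, applied to the core of $\fa$ expanded by constants).

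\textbf{The model is mis-specified.} You miss the constraint that pins it down: \emph{every} model of the theorem satisfies $\acf$. Indeed, $K_2$ and $\vec C_p$ for $p$ prime are not omniexpressive: the ternary majority, resp.\ $(x_1,\dots,x_q)\mapsto q^{-1}(x_1+\dots+x_q) \bmod p$ for a prime $q\neq p$, is a cyclic polymorphism. Yet $K_{K_2}$ implies $\ac(2)$, and $K_{\vec C_p}$ implies $\kw(p)$ by Corollary~\ref{cycle}. So Corollary~\ref{kw_prime} yields $\acf$. This has three consequences.
\begin{enumerate}
\item The $\mathbb{Z}_2$-type symmetry you propose cannot kill $\ufl$ by blocking selection from a family of $2$-element sets, since that kills $K_{K_2}$.
\item For each fixed prime $p$, rotations of $p$-element sets must disappear after enlarging the support by finitely many atoms; otherwise $\kw(p)$ fails. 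Note that $\vec C_p$ is a non-omniexpressive template with no $p$-ary cyclic polymorphism, so it cannot be handled by averaging over $\mathbb{Z}_p$.
\item ``Large primes'' must therefore mean ``larger than $|A|$ after an $\fa$-dependent finite enlargement of the support''. In a cyclic-symmetry model, $\ufl$ must fail because unboundedly large primes occur, not because of any fixed prime.
\end{enumerate}
Nothing in the homogeneous $\omega$-categorical set-up you describe provides this. A model that does: split the atoms into blocks $B_n$ of pairwise distinct prime sizes $p_n$, and let $G=\prod_n\mathbb{Z}_{p_n}$ rotate each block. There $\{B_n: n\in\omega\}$ has no choice function, so $\ufl$ fails. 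For a given $\fa$, put one atom of each block with $p_n\le|A|$ into the support.

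\textbf{The averaging step is missing an argument.} The stabiliser $\operatorname{fix}(E')$ is an infinite product, not a finite group, so no finitary polymorphism averages over its orbits. One fix:
\begin{enumerate}
\item Average over finitely many factors at a time. This preserves invariance under factors already treated, because the factors commute.
\item The sets of homomorphisms $\ii\to\fa$ invariant under finitely many factors are nonempty, closed and nested in the compact space $\prescript{I\!}{}{A}$ of the ground $\zfc$ universe. So their intersection contains some $h$.
\item Every element of $\ii$ has finite support, so $h$ is invariant under all of $\operatorname{fix}(E')$ and lies in the permutation model.
\end{enumerate}

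\textbf{The transfer to $\zf$ is incomplete.} The Jech--Sochor embedding transfers boundable statements such as $\neg\ufl$. It does not by itself transfer the universally quantified statements $K_{\fa}$. These need a further argument, e.g.\ Pincus's transfer theorems or a direct symmetric extension.
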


	Note that combining the finite-domain CSP dichotomy theorem with Theorem~\ref{thm:ktv} we obtain that $\csp(\fa)$ is {\nph} if and only if $K_{\fa}$ is equivalent to the ultrafilter lemma, unless $\p=\np$.
\subsection{Promise CSPs}

	We call a pair of structures $(\fa,\fb)$ a \emph{promise template} if $\fa$ and $\fb$ have the same signature and there is a homomorphism from $\fa$ to $\fb$. A promise template $(\fa,\fb)$ is called finite if both $\fa$ and $\fb$ are finite.
	The \emph{promise CSP}, or \emph{PCSP}, over a finite promise template $(\fa,\fb)$, denoted by $\pcsp(\fa,\fb)$, is the decision problem where for an input $\ii$ with the same signature as $\fa$ (or $\fb)$ we need to output
\begin{itemize}
\item YES if $\ii$ homomorphically maps to $\fa$, and
\item NO if $\ii$ does not homomorphically map to $\fb$.
\end{itemize}

	The ``promise'' in this setting is that we are given an input where one of the two cases above holds. Note that in this notation $\csp(\fa)$ is the same as $\pcsp(\fa,\fa)$. The notion of promise PCSPs was introduced in~\cite{austrin20172+varepsilon}, although some variations, such as approximate graph colourings (i.e., PCSPs over $(K_k,K_c)$) already appeared earlier in the literature~\cite{garey1976complexity,dinur2005hardness,guruswami2004hardness,huang2013improved,brakensiek2016new}. As it turns out, pp-constructions, and thus also omniexpressivity, can be generalised to promise templates, and they still imply complexity reductions in the same way~\cite{barto2021algebraic}. 
	For example, it follows from the results of~\cite{brakensiek2016new} 
	that $(K_k,K_{2k-2})$ is omniexpressive for all $k\geq 3$, and therefore $\pcsp(K_k,K_{2k-2})$ is {\npc}.
	However, as opposed to finite-domain CSPs, there are many non-omniexpressive finite promise templates which are known to have hard PCSPs. Some notable examples are
	
\begin{itemize}
\item $(K_k,K_{2k-1})$ for $k\geq 3$~\cite{barto2021algebraic},
\item $(K_k,K_c)$ where $k\geq 4$ and $c={k\choose \lfloor k/2 \rfloor}-1$~\cite{krokhin2023topology},
\item $(C_{2k+1},K_4)$ for $k\geq 1$ where $C_{\ell}$ denotes the (undirected) cycle of length $\ell$~\cite{avvakumov2025hardness}, and
\item $(H_2,H_c)$ for $c\leq 2$ where $H_k$ denotes the structures on a $k$-element set with a single ternary not-all-equal relation~\cite{dinur2005hardness}.
\end{itemize}

	Further hardness results have been shown in~\cite{austrin20172+varepsilon,filakovsky2023hardness}. We remark that all PCSPs over $(K_k,K_c)$ where $3\leq k\leq c$ are in fact conjectured to be hard but already the complexity of $\pcsp(K_3,K_6)$ is open~\cite{barto2021algebraic}.
	
\subsection{Promise compactness}

	Following the idea of promise CSPs, we introduce the promise version of compactness principles in the following way. Let $(\fa,\fb)$ be a promise template. We then write $K_{(\fa,\fb)}$ for the following statement, to which we refer as the \emph{promise compactness principle} over $(\fa,\fb)$.

\begin{itemize}
\item \emph{For every relation structure $\ii$ with the same signature as $\fa$ if all finite\\ substructures of $\ii$ homomorphically map to $\fa$ then $\ii$ homomorphically maps to $\fb$.}
\end{itemize}

	It follows from a straightforward generalization of the proofs of~\cite{katay2023csp} that if $(\fa,\fb)$ pp-constructs $(\fc,\fd)$ then $K_{(\fa,\fb)}$ implies $K_{(\fc,\fd)}$ (over $\zf$) (see Section~\ref{sect:promise}). Thus, it still holds that if $(\fa,\fb)$ is omniexpressive then $K_{(\fa,\fb)}$ is equivalent to the ultrafilter lemma. By the results of~\cite{brakensiek2016new} mentioned above, this includes all promise templates of the form $(K_k,K_c)$ where $3\leq k\leq c\leq 2k-2$. This means, for example, that if in a $\zf$ model every finitely 3-colourable graph can be coloured by 4 colours then all these graphs can in fact be coloured by 3 colours. So we can get rid of one of the colours without any choice axiom. 

	
	Since in the promise setting omniexpressivity does not correspond to hardness of PCSPs this motivates the question as to which promise compactness principles are equivalent to $K_{K_3}$, or more concretely: is it true that if $\pcsp(\fa,\fb)$ is hard, then $K_{(\fa,\fb)}$ is equivalent to the ultrafilter lemma? In this article, we settle this question in the following cases.
	
\begin{theorem}\label{thm:int1}
	Any of the following statements are equivalent to the ultrafilter lemma over $\zf$.
\begin{enumerate}
\item $K_{(K_k,K_{2k-1})}$ with $k\geq 3$.
\item $K_{(H_k,H_c)}$ with $2\leq k\leq c$.
\end{enumerate}
\end{theorem}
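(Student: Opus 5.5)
We prove the two directions separately. The implication from the ultrafilter lemma to each $K_{(\fa,\fb)}$ is immediate. If every finite substructure of $\ii$ maps to $\fa$, then by the compactness theorem (a consequence of the ultrafilter lemma over $\zf$) $\ii$ maps to $\fa$. Composing with the homomorphism $\fa\to\fb$ given by the promise template shows that $\ii$ maps to $\fb$. So the work is entirely in the converse: showing that $K_{(K_k,K_{2k-1})}$, respectively $K_{(H_k,H_c)}$, implies $K_{K_3}$, which is equivalent to the ultrafilter lemma by L\"{a}uchli's theorem.

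The strategy for the converse is to prove a general statement, as announced in the abstract: if $(\fa,\fb)$ has no Ol\v{s}\'{a}k polymorphism, then $K_{(\fa,\fb)}$ implies the ultrafilter lemma. Theorem~\ref{thm:int1} then follows from known facts. The templates $(K_k,K_{2k-1})$ for $k\geq 3$ and $(H_k,H_c)$ have no Ol\v{s}\'{a}k polymorphism (this is the algebraic obstruction used in~\cite{barto2021algebraic} and its follow-ups). Moreover, $(H_k,H_c)$ pp-constructs (or is homomorphically sandwiched appropriately with) $(H_2,H_c)$. If needed, the implication ``$(\fa,\fb)$ pp-constructs $(\fc,\fd)$ implies $K_{(\fa,\fb)}\Rightarrow K_{(\fc,\fd)}$'' from Section~\ref{sect:promise} transfers the result between templates.

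For the general statement, I would follow the pattern of L\"{a}uchli's argument and of~\cite{katay2023csp}, with the free structure replacing pp-constructibility. Fix a structure $\ii$ that is finitely $K_3$-colourable, and aim for a $K_3$-colouring using $K_{(\fa,\fb)}$. Build an auxiliary (possibly infinite) structure $\fm$ in the signature of $\fa$. Its elements are pairs consisting of a finite substructure $F\subseteq\ii$ together with coordinates of $\fa^{\hom(F,K_3)}$, i.e.\ a ``minion-like'' encoding of all partial solutions, glued along restrictions. Finite substructures of $\fm$ map to $\fa$ because any finite part only involves finitely many $F$, and a single solution on their union projects compatibly. By $K_{(\fa,\fb)}$ we obtain a homomorphism $h:\fm\to\fb$. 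Restricting $h$ to each finite piece gives polymorphism-like maps $\fa^{n}\to\fb$, compatible under minors. This yields a minion homomorphism from a minion attached to the finite solution sets of $\ii$ into $\pol(\fa,\fb)$.

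The key step, and the main obstacle, is turning this compatible system into a choice function: a coherent selection of a solution on each finite $F$, which then glues to a global $K_3$-colouring. I would argue by contradiction using Ol\v{s}\'{a}k's identity. If on some finite $F$ the induced map does not ``pick'' a single coordinate (a single solution), then one can combine the maps for three overlapping finite pieces to produce a $6$-ary function in $\pol(\fa,\fb)$ satisfying $o(x,x,y,y,y,x)=o(x,y,x,y,x,y)=o(y,x,x,x,y,y)$, which is forbidden. Making this precise requires identifying, for each finite $F$, a nonempty set of ``essential'' solutions that is invariant under restriction. It also requires showing that these sets are singletons, or at least that they form an inverse system of nonempty finite sets with a definable choice. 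Finding the right definition of essential coordinates, so that the absence of Ol\v{s}\'{a}k terms forces uniqueness without using choice, is where I expect most of the difficulty. Once it is done, the selected solutions are unique, so the union is a well-defined homomorphism $\ii\to K_3$, giving $K_{K_3}$ and hence the ultrafilter lemma.
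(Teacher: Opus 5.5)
Your easy direction is fine, and so is the overall reduction: $(H_2,H_c)$ is a homomorphic relaxation of $(H_k,H_c)$, all these templates and $(K_k,K_{2k-1})$ lack Ol\v{s}\'{a}k polymorphisms, and your auxiliary structure $\fm$ plays the role of the paper's $\Gamma(\ii)$ mapped into a free structure. (The gluing needs care, e.g.\ evaluating only at partial colourings that extend to every finite superset.)

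The genuine gap is the step you flag yourself, and the mechanism you propose for it cannot work. Suppose ``essential'' is an intrinsic notion, and you attach to each polymorphism $t(F)$ a single essential coordinate carried to that of $t(F')$ by every restriction minor. That is in effect a minion homomorphism $\pol(\fa,\fb)\to\proj$, i.e.\ omniexpressivity (Corollary~\ref{hardness}). But $(K_k,K_{2k-1})$ is not omniexpressive (Theorem~\ref{graph_promise}). So the absence of Ol\v{s}\'{a}k polymorphisms cannot force such uniqueness, and no argument will turn ``three overlapping pieces'' into an Ol\v{s}\'{a}k polymorphism.

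Your fallback is circular. The principle that every inverse system of nonempty finite sets indexed by the finite subsets of $\ii$ has a thread already implies $K_{K_3}$: apply it to the $3$-colourings of finite subgraphs. You offer no definable choice in its place. You also never address choice, whereas even the paper's weaker extraction step needs $\acf$.

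The paper's route differs in kind, in three steps.

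(i) Ol\v{s}\'{a}k-free templates have no cyclic polymorphisms (Proposition~\ref{cycl}), and this yields $\kw(p)$ for every prime $p$, hence $\acf$, from $K_{(\fa,\fb)}$. Apply $K_{(\fa,\fb)}$ to a disjoint union of copies of $\fa^p$ indexed by a family of $p$-element sets $C$. In each copy take the $\prec$-least polymorphism obtainable by relabelling, and collect the elements of $C$ that the optimal relabellings send to $0$. This is a nonempty proper subset of $C$: a transitive stabiliser in $\sym(p)$ would contain a $p$-cycle by Cauchy's theorem, giving a cyclic polymorphism.

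(ii) After pp-constructing $(H_2,H_c)$, the paper uses the deep fact (Theorem~\ref{wrochna}) that $\pol(H_2,H_c)$ admits a weak $(d,r)$-minion homomorphism $\xi$ to $\proj$. Each polymorphism gets at most $d$ candidate coordinates. Compatibility holds only for some pair in each chain of $r$ minors, not along every minor.

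(iii) The candidate sets $\xi(t(U))$, for $U\subseteq\dom(\ii)$ of sizes $k_0\geq\dots\geq k_r$, form a consistent sequence of bounded-value partial assignment systems. The PAS theorem (Theorem~\ref{pas}), reproved in $\zf+\acf$ as Theorem~\ref{pas2}, then gives an $m$-solution, i.e.\ a homomorphism $\ii\to K_3$ (Theorem~\ref{dr_reduction}). Here $\acf$ supplies the simultaneous finite choices.

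The global colouring thus comes from a combinatorial gap theorem, not from any coherent local selection. That theorem is exactly the ingredient your plan is missing.
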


	As for finitely 3-colourable graphs this means that we can also get rid of a second colour without any choice axiom.

\subsection{Polymorphisms}

	Let $(\fa,\fb)$ be a promise template. Then a map $f:A^k\rightarrow B$ is called a \emph{polymorphism} of $(\fa,\fb)$ if it is a homomorphisms from the categorical power $\fa^k$ to $\fb$. A polymorphism of a single structure $\fa$ is a polymorphism of $(\fa,\fa)$. Polymorphisms of a promise template always form a \emph{minion}, i.e., a minor-closed set of operations, 
	and polymorphisms of a single structure from a clone. We write $\pol(\fa,\fb)$ for the polymorphism minion of $(\fa,\fb)$, and we write $\pol(\fa,\fa)\coloneqq \pol(\fa)$. Polymorphisms play an essential role in the study of CSPs and PCSPs. For example, we know by the results of~\cite{barto2021algebraic} that for finite $\fa,\fb,\fc,\fd$ the promise template $(\fa,\fb)$ pp-constructs $(\fc,\fd)$ if and only if there exists a \emph{minion homomorphism} from $\pol(\fa,\fb)$ to $\pol(\fc,\fd)$. By the results of~\cite{Cyclic} we know the following algebraic dichotomy for finite structures.
	
\begin{theorem}\label{thm:cyclic}
	Let $\fa$ be a finite structure. Then $\fa$ is not omniexpressive if and only if $\fa$ has a cyclic polymorphism, i.e., an $h\in \pol(\fa)$ satisfying the identity $h(x_0,\dots,x_{k-1})=h(x_1,\dots,x_{k-1},x_0)$ for some $k\geq 2$.
\end{theorem}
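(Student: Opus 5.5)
The plan is to prove the two directions separately. The backward direction is elementary modulo the minion machinery. The forward direction reduces to the Barto–Kozik cyclic terms theorem, which I would invoke rather than reprove.

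\emph{($\Leftarrow$)} Suppose $h\in\pol(\fa)$ is cyclic of arity $k\geq 2$, and suppose towards a contradiction that $\fa$ is omniexpressive, so $\fa$ pp-constructs $K_3$. By the correspondence of~\cite{barto2021algebraic} (or the earlier~\cite{wonderland}), there is a minion homomorphism $\xi\colon\pol(\fa)\to\pol(K_3)$. Minion homomorphisms commute with taking minors, so they preserve height-one identities; since the cyclic identity is height one, $\xi(h)$ is a cyclic polymorphism of $K_3$ of arity $k$. Next I use the classical fact (Greenwell--Lov\'asz) that every polymorphism of $K_3$ is essentially unary, so $\xi(h)(x_0,\dots,x_{k-1})=\sigma(x_i)$ for some $i$ and some automorphism $\sigma$ of $K_3$. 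The cyclic identity then gives $\sigma(x_i)=\sigma(x_{i+1 \bmod k})$ for all inputs. Since $k\geq 2$ these are distinct variables and $\sigma$ is injective, this is a contradiction.

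\emph{($\Rightarrow$)} Suppose $\fa$ is not omniexpressive, i.e.\ $\fa$ does not pp-construct $K_3$. Since $K_3$ pp-constructs every finite structure and $\pol(K_3)$ is minion-equivalent to the projection minion $\proj$, the characterisation of~\cite{wonderland,barto2021algebraic} shows that there is no minion homomorphism $\pol(\fa)\to\proj$. Equivalently, $\pol(\fa)$ satisfies some nontrivial height-one condition, i.e.\ the polymorphism algebra of $\fa$ is a finite Taylor algebra. It then remains to upgrade ``Taylor'' to ``cyclic''. Here I apply the theorem of~\cite{Cyclic}: a finite idempotent Taylor algebra $\mathbf{A}$ has a cyclic term of every prime arity $p>|A|$. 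To reduce to the idempotent case, I first pass to the core of $\fa$ and add all singleton unary relations. Both operations preserve pp-constructibility up to homomorphic equivalence, so the expanded core is still not omniexpressive. Its polymorphism clone is idempotent, and a cyclic polymorphism of the core composes with the retraction and inclusion to give a cyclic polymorphism of $\fa$ of the same arity.

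The main obstacle is the cyclic terms theorem itself. If it had to be proved rather than cited, I would follow Barto--Kozik's route. First, reformulate the existence of a $p$-ary cyclic term as the statement that a suitable relation on the $p$-th power, invariant under cyclic shifts, contains a constant tuple. Then argue by induction on $|A|$ using absorption theory: either $\mathbf{A}$ has a proper absorbing subuniverse or proper subalgebra one can shrink to, or $\mathbf{A}$ is absorption-free. In the absorption-free case, the Loop Lemma for Taylor algebras, applied to a smooth digraph of algebraic length one, produces the required loop. The arithmetic condition that $p$ is a prime with $p>|A|$ is exactly what guarantees that the cyclic-shift digraph built this way has algebraic length one. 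Everything outside this step is bookkeeping with minion homomorphisms and cores.
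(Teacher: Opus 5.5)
Your proposal is correct and follows essentially the paper's route, which proves this statement inside Theorem~\ref{minor_equiv}: for the forward direction the paper cites~\cite{Cyclic} in the idempotent case and removes idempotency by passing to the core expanded by constants, exactly as you do. For the converse, the paper goes through area-rare and Ol\v{s}\'{a}k operations and Corollary~\ref{hardness2} (a minion homomorphism to $\proj$ via $K_3^*$), while you map the cyclic operation directly into $\pol(K_3)$ and apply Greenwell--Lov\'asz; both rest on the same fact that a minion homomorphism preserves the cyclic identity, whereas a minion of essentially unary operations cannot satisfy it.
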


	We remark that Theorem~\ref{thm:cyclic}, in fact, plays a crucial role in both Zhuk's and Bulatov's proof of the finite-domain CSP dichotomy theorem, as well as in the proof of Theorem~\ref{thm:ktv}, although in the former two proofs a weaker version of Theorem~\ref{thm:cyclic} (with \emph{weak near-unanimity} polymorphisms) is sufficient.
	
	In a recent work of Barto and Kozik~\cite{barto2022combinatorial} a new reduction between promise CSPs was introduced which is strictly more powerful than pp-constructions. This reduction involves a generalization of minion homomorphisms which we will simply call \emph{weak minion homomorphisms} (originally called \emph{$(d,r)$-minion homomorphisms}, see Definition~\ref{def:min}). This reduction together with the results of~\cite{nakajima2025complexity} gives a new proof of the hardness of PCSPs over $(H_k,H_c): 2\leq k\leq c$ and $(K_k,K_{2k-1}): k\geq 3$. Note however that this reduction is purely algebraic in nature, and as of now there is no clear description on the level of structures as to when this reduction is possible. Nevertheless, in this article, we show that this reduction is robust enough that it can also be used to infer implication between the corresponding promise compactness principles provided some weak version of $\ac$.
	
\begin{theorem}\label{thm:int2}
	Assume that for all $k\in \omega$ the axiom of choice holds for all families of $k$-element sets, and let us assume that there exists a weak minion homomorphism from $\pol(\fa,\fb)$ to $\pol(\fc,\fd)$. Then if $K_{(\fa,\fb)}$ holds, then so does $K_{(\fc,\fd)}$.
\end{theorem}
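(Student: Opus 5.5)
We first recall the shape of a weak ($(d,r)$-)minion homomorphism in the sense of Barto--Kozik. It assigns to each $f\in\pol(\fa,\fb)$ of arity $n$ a nonempty set $\xi(f)$ of at most $r$ polymorphisms in $\pol(\fc,\fd)$ of arity $n$. For every minor $f^\pi$ there are $g\in\xi(f)$ and $g'\in\xi(f^\pi)$ with $g^\pi=g'$; more generally, any $d$ minor conditions can be simultaneously compatibly witnessed. The plan is to run the Kátay--Tóth--Vidnyánszky argument (generalised to templates in Section~\ref{sect:promise}) at the level of a free structure. The choice principle $\acf$ is used to choose among the boundedly many candidates that $\xi$ provides.

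Fix a $\fc$-structure $\ii$ all of whose finite substructures map to $\fc$. Since $\fc$ is finite, we may assume its domain is $C=\{c_1,\dots,c_m\}$. The standard construction applies: from $\ii$ we form the ``pp-power'' instance $\ii^*$ over the signature of $\fa$ whose domain is $A^{C}$-indexed. Concretely, $\ii^*$ is the structure whose elements are pairs $(v,a)$ with $v\in I$ encoding candidate $\fa$-labels. It has constraints forcing, on each finite piece, that a homomorphism to $\fb$ restricts to a polymorphism $\fa^{C}\to\fb$. This is the free-structure / minor-condition encoding: homomorphisms $\ii^*\to\fb$ correspond to families $(f_v)_{v\in I}$ of $m$-ary polymorphisms in $\pol(\fa,\fb)$ that satisfy the minor conditions dictated by the tuples of $\ii$. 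We also arrange that every finite substructure of $\ii^*$ maps to $\fa$: a homomorphism of a finite part of $\ii$ to $\fc$ gives projection-type assignments. Then $K_{(\fa,\fb)}$ yields a homomorphism $h:\ii^*\to\fb$, hence a family $(f_v)$ of polymorphisms satisfying, for each tuple $\bar v\in R^{\ii}$, the minor identities relating the $f_{v_i}$ to a polymorphism $f_{\bar v}$ of arity $|R^\fc|$.

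Now we apply $\xi$. Each $\xi(f_v)$ is a set of at most $r$ elements of the finite set $\pol(\fc,\fd)^{(m)}$, so $\acf$ is not even needed to pick from it. The issue is coherence: we must choose $g_v\in\xi(f_v)$ so that for each tuple $\bar v$ the $g_{v_i}$ are minors of a common $g\in\xi(f_{\bar v})$. Then evaluating gives the map $v\mapsto g_v(c_1,\dots,c_m)$, which is the desired homomorphism $\ii\to\fd$.

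The main obstacle is this coherent choice, and it is a compactness problem in its own right. For each $v$ the choice set has size $\le r$, and for each constraint the allowed combinations are determined by $\xi$. The $(d,r)$ condition guarantees that any $d$ of these local minor conditions are simultaneously satisfiable. I would try first to iterate: collapse this into a new instance over a finite template, one whose domain is subsets of $\pol(\fc,\fd)^{(m)}$ of size $\le r$. Then use that $\ii$ itself is finitely $\fc$-satisfiable together with $\acf$, choosing uniformly from the finite sets of ``good'' choices on each finite orbit. If the local $d$-consistency is insufficient, I would instead choose $d$ large relative to the arity and replace $\ii$ by a suitable power or barycentric refinement. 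In that refined instance every constraint involves at most $d$ minor relations, so the Barto--Kozik condition applies exactly. The argument then ends by selecting, via $\acf$ on the finite sets of consistent choices, a globally consistent family.
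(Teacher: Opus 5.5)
There is a genuine gap, and it begins with the definition you are working from. In a $(d,r)$-minion homomorphism (Definition~\ref{def:min}), $d$ bounds $|\xi(t)|$ and $r$ is a chain length. The only guarantee is that along every chain $t_0\xrightarrow{\pi_{0,1}}t_1\xrightarrow{\pi_{1,2}}\dots\xrightarrow{\pi_{r-1,r}}t_r$ \emph{some} pair $i<j$ admits $g\in\xi(t_i)$, $h\in\xi(t_j)$ with $g\xrightarrow{\pi_{i,j}}h$. For $r\geq 2$ an individual minor need not have compatible witnesses at all. There is also no property that ``any $d$ minor conditions are simultaneously witnessed'', and $d$ is fixed by $\xi$, so it cannot be chosen large.

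Your instance $\ii^*$ is essentially Lemma~\ref{free_reduction} with $\fs=\fc$ and $\mathcal{M}=\pol(\fa,\fb)$, and that part is fine. But it only produces one-step minors $f_{\bar v}\to f_{v_i}$, about which the defining property of $\xi$ says nothing. Moreover, even if every single minor had compatible witnesses, selecting $g_v\in\xi(f_v)$ coherently over all constraints is a label-cover-type problem. Its local solvability does not imply global solvability, already for finite $\ii$. This is a combinatorial obstruction, not a choice-theoretic one. $\acf$ only gives independent choices from finite sets and no coherence across overlapping constraints; it is strictly weaker than $\ufl$, i.e.\ than $K_{K_3}$. ``Collapsing'' the selection problem into a new instance over a finite template just trades it for a compactness principle you do not have.

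The missing idea is the layered Barto--Kozik construction together with their PAS theorem, which is designed so that no coherent selection from the sets $\xi(\cdot)$ is ever made. The paper proceeds as follows.
\begin{itemize}
\item It fixes $k_0\geq\dots\geq k_r$ as in Theorem~\ref{pas}, with $n=|D|$ and $m$ the maximal arity of $\fc$.
\item The elements of $\Gamma(\ii)$ are all $U\in\bigcup_i{V\choose k_i}$. The coordinates at $U$ are indexed by the finite set $D_U$ of homomorphisms $\ii|_U\to\fc$, enumerated by bijections $\sigma_U$ chosen using $\acf$.
\item For $W\subseteq U$ it adds a binary constraint of the auxiliary structure $\fs$ (whose relations are all partial functions on a fixed finite set) encoding restriction $D_U\to D_W$.
\item A homomorphism $\Gamma(\ii)\to\free_{\mathcal{M}}(\fs)$, obtained from $K_{(\fa,\fb)}$ via Lemma~\ref{free_reduction}, yields $t(U)\in\mathcal{M}$ with $t(U)\xrightarrow{\pi_{U,W}}t(W)$. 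Hence each descending chain $U_0\supseteq\dots\supseteq U_r$ with $|U_i|=k_i$ gives exactly a chain of $r$ minors.
\item The PASes $\mathcal{I}_i(U)=\{Z_U(g):g\in\xi(t(U))\}$ have value at most $d$ and consist of partial homomorphisms to $\fd$. The $(d,r)$-condition is precisely their consistency.
\item The PAS theorem (Lemma~\ref{pas3}, whose proof for infinite variable sets is the other place $\acf$ is needed) gives an $m$-solution of some $\mathcal{I}_i$, which is already a homomorphism $\ii\to\fd$.
\end{itemize}
Your observation that picking from the sets $\xi(f_v)$ needs no choice is correct, but it does not address this step.
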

	
	Even though Theorem~\ref{thm:int2} uses a choice assumption in its formulation, we show that in the concrete cases where we need to apply it, this assumption automatically holds. More specifically, we show the following.

\begin{theorem}\label{thm:int3}
	Let us assume that $(\fa,\fb)$ is finite promise template which has no cyclic polymorphism and $K_{(\fa,\fb)}$ holds. Then for all $k\in \omega$ the axiom of choice holds for all families of $k$-element sets.
\end{theorem}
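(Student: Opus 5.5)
The plan is to encode a family of $k$-element sets into one instance $\ii$ of the appropriate signature, apply $K_{(\fa,\fb)}$ to get a homomorphism $\ii\to\fb$, and read a choice function off that homomorphism. The absence of cyclic polymorphisms will force the extracted data to break the symmetry of each set.

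\textbf{Step 1 (reduction to a symmetry-breaking statement).} I will argue by induction on $k$. Assume that choice holds for all families of $m$-element sets with $m<k$. Let $\mathcal F$ be a family of $k$-element sets. It suffices to assign to each $S\in\mathcal F$, uniformly and without choice, a partition of $S$ into at least two classes that is not a single class. Given such partitions, for each $S$ I first choose one of the (fewer than $k$) classes. I apply the inductive hypothesis to the family of sets of classes, after splitting $\mathcal F$ according to the number of classes, which is a finite case split. I then choose an element inside the chosen class, again by the inductive hypothesis, since classes have fewer than $k$ elements. The base case $k=1$ is trivial.

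\textbf{Step 2 (the instance).} For $S\in\mathcal F$ let $L_S$ be the set of bijections $k\to S$. This set is defined without choice and has $k!$ elements. Let $\ii$ be the disjoint union over $S\in\mathcal F$ of the categorical powers $\fa^{L_S}$. Every finite substructure of $\ii$ lies in finitely many summands. For each of these summands I pick one $\lambda\in L_S$ (a finite choice, so valid in $\zf$) and use the projection $\fa^{L_S}\to\fa$ onto coordinate $\lambda$; this is a homomorphism. Hence every finite substructure maps to $\fa$, and $K_{(\fa,\fb)}$ gives a homomorphism $h\colon\ii\to\fb$. Its restriction $h_S\colon A^{L_S}\to B$ is, after any identification $L_S\cong k!$, a polymorphism of $(\fa,\fb)$.

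\textbf{Step 3 (the stabiliser).} The group $\sym(S)$ acts on $L_S$ by left composition, and this action is regular. Let $G_S\le\sym(S)$ be the stabiliser of $h_S$ under the induced action on coordinates. This group is canonically defined from $h_S$. If $G_S$ is not transitive on $S$, its orbits give the required nontrivial partition of $S$, and Step 1 finishes the proof.

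\textbf{Step 4 (the main obstacle: $G_S$ transitive).} I must show that if $G_S$ is transitive on $S$, then $(\fa,\fb)$ has a cyclic polymorphism, contradicting the hypothesis. Transitivity gives $G_S\neq 1$ because $k\ge2$. Pick $g\in G_S$ of some order $m\ge2$. Since $\sym(S)$ acts regularly on $L_S$, the cyclic group $\langle g\rangle$ acts freely on $L_S$, so $L_S$ splits into $k!/m$ cycles, each of length $m$. In each cycle I fix a starting point; this involves only finitely many choices, and this step concerns the mere existence of a finite operation, so there is no choice issue. I then label the elements of each cycle by positions $0,\dots,m-1$ so that $g$ shifts the position by one. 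Define the minor $f(x_0,\dots,x_{m-1})\coloneqq h_S(z)$, where $z_\lambda=x_i$ if $\lambda$ has position $i$. Invariance of $h_S$ under $g$ then yields $f(x_0,\dots,x_{m-1})=f(x_1,\dots,x_{m-1},x_0)$, and $f\in\pol(\fa,\fb)$ because polymorphisms form a minion.

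The delicate points are, first, checking that the direction of the action (left composition on $L_S$ versus the permutation of coordinates) is consistent, so that invariance really gives the cyclic shift; and second, making sure that everything in Steps 1 and 3 is defined uniformly in $S$. Note that Step 4 in fact only uses $G_S\neq1$. One could therefore already split $S$ using any nontrivial stabiliser, but the transitivity dichotomy is the natural formulation.
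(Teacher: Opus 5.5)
There is a genuine gap, and your closing remark points straight at it. As you note, Step 4 only uses $G_S\neq 1$. Any nontrivial $g\in G_S$ acts freely on $L_S$, because $\sym(S)$ acts regularly on $L_S$, and so it yields a cyclic polymorphism of arity $\operatorname{ord}(g)$. Under the hypothesis this forces $G_S=1$ for every $S\in\mathcal F$. The orbit partition of Step 3 is therefore \emph{always} the partition of $S$ into $k$ singletons.

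Step 1, however, needs a partition with fewer than $k$ classes; the phrase ``at least two classes'' does not guarantee this. Choosing one of $k$ singleton classes is exactly the original problem. So Steps 1 and 3 never make progress: with the regular encoding the stabiliser is trivial, and its orbits carry no information.

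What is missing is a way to break symmetry canonically using the finiteness of the set of operations, rather than relying only on the orbit partition. The paper does this with the encoding $\fa^C$, where the coordinates are indexed by $C$ itself and $|C|=p$ is prime.
\begin{itemize}
\item It fixes a well-order $\prec$ of the $p$-ary polymorphisms and takes the $\prec$-least $g(C)$ among the minors $(h|_{A^C})_\alpha$, $\alpha\colon C\to p$ a bijection.
\item It sets $f(C)=\{\beta^{-1}(0): (h|_{A^C})_\beta=g(C)\}$, which is the preimage of a single orbit of the stabiliser $\Gamma$ of $g(C)$.
\item Cauchy's theorem shows $\Gamma$ is intransitive, which gives $\kw(p)$. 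Then $\acf$ follows from Corollary~\ref{kw_prime}, and hence from Theorem~\ref{prime_sum}.
\end{itemize}

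Your regular encoding can be rescued by the same trick, and it then becomes even shorter.
\begin{itemize}
\item Fix an enumeration of $\sym(k)$ and a well-order $\prec$ of the finite set of maps $A^{k!}\to B$.
\item For $\mu\in L_S$, let $P_\mu$ be $h_S$ transported along $\lambda\mapsto\mu^{-1}\circ\lambda$.
\item A direct computation gives $P_{\sigma\circ\mu}=P_\mu$ if and only if $\sigma\in G_S$. Since $G_S=1$, the map $\mu\mapsto P_\mu$ is injective.
\item The $\prec$-least $P_\mu$ therefore determines a unique $\mu_S\in L_S$, and $S\mapsto \mu_S(0)$ is a choice function.
\end{itemize}
This gives $\ac(k)$ for every $k$ directly, with no induction and without passing through $\kw(p)$ or Theorem~\ref{prime_sum}.
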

	
	An operation $f:A^6\rightarrow B$ is called \emph{Ol\v{s}ák} if it satisfies the identities $$f(x,x,y,y,y,x)=f(x,y,x,y,x,y)=f(y,x,x,x,y,y).$$
Using Theorems~\ref{thm:int2} and Theorem~\ref{thm:int3} we will show the following.

\begin{theorem}\label{thm:int4}
	Let us assume that $(\fa,\fb)$ is finite promise template which has no Ol\v{s}ák polymorphism. Then $K_{(\fa,\fb)}$ is equivalent to the ultrafilter lemma.
\end{theorem}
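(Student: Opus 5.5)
One direction is immediate: the ultrafilter lemma implies the compactness theorem of propositional logic. Given $\ii$ all of whose finite substructures map to $\fa$, compactness yields a homomorphism $\ii\to\fa$, and composing with a fixed homomorphism $\fa\to\fb$ gives $\ii\to\fb$. So the work is in showing that $K_{(\fa,\fb)}$ implies the ultrafilter lemma. By L\"{a}uchli's theorem it suffices to derive $K_{K_3}$, or any $K_{\fc}$ with $\fc$ omniexpressive, or more generally $K_{(\fc,\fd)}$ for an omniexpressive promise template $(\fc,\fd)$.

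The plan is to combine Theorems~\ref{thm:int2} and~\ref{thm:int3} with the algebraic input from Barto--Kozik~\cite{barto2022combinatorial} and Nakajima--\v{Z}ivn\'y~\cite{nakajima2025complexity}.

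First, we reduce Theorem~\ref{thm:int3} to the present hypothesis. An Ol\v{s}\'ak polymorphism can be obtained as a minor of a cyclic polymorphism: from a cyclic $h$ of arity $n$ one builds a suitable minor satisfying the Ol\v{s}\'ak identities. This is the standard fact that Ol\v{s}\'ak identities are implied by cyclic terms of any arity $n\ge 2$, for minions as well as clones. If that direct route fails for some arities, I would instead use the known result that the Ol\v{s}\'ak condition is the weakest nontrivial minor condition for minions of this type. So ``no Ol\v{s}\'ak polymorphism'' implies ``no cyclic polymorphism'', and Theorem~\ref{thm:int3} gives $\ac$ for families of $k$-element sets for every $k\in\omega$. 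Hence the choice hypothesis of Theorem~\ref{thm:int2} is satisfied in our model.

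Second, we need an algebraic statement. If $\pol(\fa,\fb)$ has no Ol\v{s}\'ak polymorphism, then there is a weak minion homomorphism, in the sense of Definition~\ref{def:min}, from $\pol(\fa,\fb)$ to $\pol(\fc,\fd)$ for some fixed omniexpressive template; the natural choice is the minion of projections, i.e.\ $\pol(K_3)$, or a template such as $(\mathbf{1in3},\mathbf{NAE})$-type whose minion is bounded-arity-projective. I would take this from the literature: Barto--Kozik show that the absence of Ol\v{s}\'ak polymorphisms forces a $(d,r)$-minion homomorphism into the projection minion $\proj$, which is exactly how hardness of PCSPs without Ol\v{s}\'ak polymorphisms is derived there. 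Since $\proj\cong\pol(K_3)$ up to minion homomorphism in both directions, composing gives a weak minion homomorphism into $\pol(K_3)$. I expect the main obstacle to be checking that composing a weak minion homomorphism with an honest minion homomorphism again yields a weak one with the same kind of parameters, and that the definitions match Definition~\ref{def:min}. If composition is awkward, I would instead apply Theorem~\ref{thm:int2} with target $\pol(\fc,\fd)$ equal to some template whose polymorphism minion is literally projections, such as $K_3$ itself, since $\pol(K_3)$ consists of essentially unary maps composed with projections.

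Finally, Theorem~\ref{thm:int2} yields $K_{K_3}$ from $K_{(\fa,\fb)}$, and L\"{a}uchli's theorem turns this into the ultrafilter lemma. Theorem~\ref{thm:int1} should then follow by verifying, via~\cite{nakajima2025complexity}, that $(K_k,K_{2k-1})$ and $(H_k,H_c)$ have no Ol\v{s}\'ak polymorphisms.
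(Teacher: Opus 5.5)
Your outer structure matches the paper's. First, $\ufl$ gives every $K_{(\fa,\fb)}$. Next, no Ol\v{s}\'ak polymorphism implies no cyclic one: Proposition~\ref{cycl} does this for every arity $\geq 2$. You should drop the fallback about Ol\v{s}\'ak being ``the weakest nontrivial minor condition'', because that is false for promise minions: $(\hh_2,\hh_c)$ is not omniexpressive yet has no Ol\v{s}\'ak polymorphism. Theorem~\ref{thm:int3} then gives $\acf$. Finally, Theorem~\ref{thm:int2} with a weak minion homomorphism into $\proj$, composed with the projection-preserving minion homomorphism $\proj\to\pol(K_3)$, yields $K_{K_3}$ and hence $\ufl$. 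Composing a $(d,r)$-minion homomorphism with an honest minion homomorphism on either side trivially keeps the $(d,r)$ property, so that is no obstacle. Applying Theorem~\ref{thm:int3} to $(\fa,\fb)$ directly rather than to $(\hh_2,\hh_c)$ is harmless.

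The gap is the load-bearing algebraic step, which you cite rather than prove: that every template without an Ol\v{s}\'ak polymorphism admits a weak minion homomorphism to $\proj$. You cannot simply attribute this to Barto--Kozik. The paper notes that even the special case $\pol(\hh_2,\hh_c)\to\proj$ (Theorem~\ref{wrochna}) is only implicit in the proof of Theorem~12 of~\cite{nakajima2025complexity}.

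The missing idea is the reduction to that special case. By item~(6) of Theorem~\ref{graph_promise} (from~\cite{barto2021algebraic}), $(\fa,\fb)$ pp-constructs $(\hh_2,\hh_c)$ for some $c\geq 2$. Hence there is a minion homomorphism $\pol(\fa,\fb)\to\pol(\hh_2,\hh_c)$, which you can compose with Theorem~\ref{wrochna}. Alternatively, as the paper does, use Corollary~\ref{construct_reduce2} to get $K_{(\hh_2,\hh_c)}$ in $\zf$ and run the rest of the argument for $(\hh_2,\hh_c)$.

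Also discard the alternative target ``$(\mathbf{1in3},\mathbf{NAE})$''. This is the template $(T,\hh_2)$ from Section~\ref{sect:open}, which has a tractable PCSP and is not omniexpressive, so it cannot replace $K_3$.
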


\subsection*{Outline of the paper}

	The paper is structured as follows. In Section~\ref{sect:prelims} we recall all notions and results in the literature that are relevant for this article. In Section~\ref{sect:promise} we introduce the notion of promise compactness and prove some results that follow easily from what is already available in the literature. In Section~\ref{sect:ac_fin} we discuss the relation of promise compactness and certain choice axioms for finite sets, and we prove Theorem~\ref{thm:int3}. In Section~\ref{sect:weak} we introduce weak minion homomorphisms and we show Theorem~\ref{thm:int2} by going through the proofs of the main results of~\cite{barto2022combinatorial} with a slight modification. This is the most technical part of the paper. Here, we also discuss how Theorems~\ref{thm:int1} and~\ref{thm:int4} can be derived from the two previously mentioned theorems. Finally, in Section~\ref{sect:open} we discuss some open problems and possible future research questions about promise compactness.

\section{Preliminaries}\label{sect:prelims}

	We denote by $\omega$ the set of natural numbers $\{0,1,\dots\}$, and we use the usual set theoretical convention that $n=\{0,1,\dots,n-1\}$ for $n\in \omega$. For any set $A$ and $B$ we use the notation $\prescript{B\!}{}{A}$ for the set of functions from $B$ to $A$. We think of $n$-tuples from a set $A$ as functions from $n$ to $A$, and if it does not cause confusion, we write $A^n$ (as opposed to $\prescript{n\!}{}{A}$) for the set of $n$-tuples from $A$. For a function $\prescript{B\!}{}{A}$ and $C\subseteq B$, we write $f|_C$ for the restriction of $f$ to $C$, i.e., $f|_C=f\cap (C\times A)$. For a subset $\mathcal{S}$ of $\prescript{B\!}{}{A}$ we write $\mathcal{S}|_C\coloneqq \{f|_C: f\in \mathcal{S}\}$. For a set $A$ we write $\power(A)$ for the set of all subsets of $A$, and for an $n\in \omega$ we write ${A\choose n}$ for the set of $n$-element subsets of $A$.
	
	By a \emph{relational signature} we mean a set of relational symbols $\mathcal{R}$ together with an arity function $\ar: \mathcal{R}\rightarrow \omega$. In this paper all signatures will be assumed to be relational and finite. A \emph{relational structure} is a triple $\fa=(A,\sigma,i)$ where $A$ is a set, called the \emph{domain set} of $\fa$, $\sigma$ is a relational signature, called the \emph{signature of $\fa$}, and $i$ maps each $R\in \mathcal{R}$ to a subset of $A^{\ar(R)}$. We will write $R^{\fa}$ for $i(R)$, and $\dom(\fa)$ for the domain set of $\fa$. We say that two structures $\fa$ and $\fb$ are \emph{similar} if they have the same signature. We use the notational convention that structures are denoted by Fraktur letters
, and their domain sets are denoted by the corresponding Latin letters. When talking about finite structures, we usually assume that their domain is $n$ for some $n\in \omega$.
	
	For a structure $\fa$ and a subset $B\subseteq A=\dom(\fa)$, we write $\fa|_B$ for the \emph{substructure of $\fa$ induced on $B$}, i.e., $\fa|_B$ is the structure whose domain set is $B$, is similar to $\fa$, and for all relational symbols $R$ in their signature we have $R^{\fa|_B}=R^{\fa}\cap B^{\ar(R)}$.
	
	If $\fa$ and $\fb$ are similar structures then a map $h: A\rightarrow B$ is called a \emph{homomorphism} from $\fa$ to $\fb$ if for all relational symbols $R$ in their signature whenever $(a_1,\dots,a_{\ar(R)})\in R^{\fa}$ then $(h(a_1),\dots,h(a_{\ar(R)}))\in R^{\fb}$. 
	
	For a $k\in \omega$ and a relational structure $\fa$ the \emph{$k$-th power of $\fa$}, denoted by $\fa^k$, is defined to be the structure whose domain is $A^k$, similar to $\fa$, and for any relational symbol $R$ with $\ar(R)=n$ we have $((a_{11},\dots,a_{1k}),\dots,((a_{n1},\dots,a_{nk}))\in R^{\fa^k}$ if and only if $(a_{1i},\dots,a_{ni})\in R^{\fa}$ for all $i=1,\dots,k$.

	We denote by $K_n$ the complete graph with vertex set $n$, and we write $H_n$ for the structure on $n$ with a single ternary \emph{not-all-equal} relation $\nae$, i.e., $$\nae\coloneqq\{\prescript{3}{}{n}\setminus \{(i,i,i): i\in n\}\}.$$

\subsection{Primitive positive constructions and minion homomorphism}

	In this subsection we are giving a quick introduction to pp-constructions of promise templates and recall some important results that are necessary for the proofs of this paper. Most of the material in this subsection are taken from~\cite{barto2021algebraic}.
	
	We assume that the reader is familiar with the usual notions in first-order logic.
		
\begin{definition}
	We say that a first-order formula $\varphi$ is \emph{primitive positive}, or \emph{pp} for short, if $\varphi$ can be built up from atomic formulas by only using conjunctions and atomic formulas.
\end{definition}
	
\begin{definition}\label{ppp}
	We write $\fa\rightarrow \fb$ if there exists a homomorphism from $\fa$ to $\fb$.\\
	A \emph{promise template} is a pair $(\fa,\fb)$ of similar structures with $\fa\rightarrow \fb$. A promise template $(\fa,\fb)$ is called finite is both $\fa$ and $\fb$ are finite. In this paper all promise templates will be finite.

	The \emph{PCSP} (P=promise) over $(\fa,\fb)$, in notation $\pcsp(\fa,\fb)$, is the decision problem where the input is a finite structure similar to $\fa$ and we need to output {\yes} if $\ii\rightarrow \fa$ and {\no} if $\ii\nrightarrow \fb$. The CSP over $\fa$, in notation $\csp(\fa)$, is $\pcsp(\fa,\fa)$.

	Let $(\fa,\fb)$ and $(\fc,\fd)$ be promise templates. Then
\begin{enumerate}[(i)]
\item $(\fc,\fd)$ is called a \emph{homomorphic relaxation} of $(\fa,\fb)$ if all structures $\fa,\fb,\fc,\fd$ are similar, $\fc\rightarrow \fa$ and $\fb\rightarrow \fd$.
\item $(\fc,\fd)$ is called an \emph{($n$th) pp-power} of $(\fa,\fb)$ if $C=A^n, D=B^n$, and for every relational symbol $R$ of arity $k$ in the signature of $\fc$ there exists a pp-formula $\varphi_R$ in the signature of $\fa$ with free variables $x_i^j: 1\leq i\leq k, 1\leq j\leq n$ such that
\begin{itemize}
\item for all $a_i^j\in A: 1\leq i\leq k, 1\leq j\leq n$ we have $((a_1^1,\dots,a_1^n),\dots,(a_k^1,\dots,a_k^n))\in R^{\fc}$ iff $\varphi_R(a_1^1,\dots,a_k^n)$ holds in $\fa$, and
\item for all $b_i^j\in B: 1\leq i\leq k, 1\leq j\leq n$ we have $((b_1^1,\dots,b_1^n),\dots,(b_k^1,\dots,b_k^n))\in R^{\fd}$ iff $\varphi_R(b_1^1,\dots,b_k^n)$ holds in $\fb$.
\end{itemize}
\end{enumerate}

	We say that $(\fc,\fd)$ is \emph{pp-constructible} from $(\fa,\fb)$, or $(\fa,\fb)$ \emph{pp-constructs} $(\fc,\fd)$, if $(\fc,\fd)$ is a homomorphic relaxation of a pp-power of $(\fa,\fb)$.
\end{definition}

	
	Note that for templates of the form $(\fa,\fa)$ homomorphic relaxation collapses to homomorphic equivalence and pp-power is just the usual pp-power for single structures.

\begin{definition}
	We denote by $\pi_i^k$ the \emph{$i$th projection map} on $k$-tuples, i.e., the map which assigns to any $k$-tuple its $i$th coordinate.\\
	Let us fix some sets $A$ and $B$. Then a set of functions $\mathcal{M}$ from some finite power $A^k: k\in \omega\setminus \{0\}$ to $B$ is called a \emph{(function) minion} if it is closed under taking minors, i.e., for any $f\in \mathcal{M}, f: A^k\rightarrow B$ and $\pi: k\rightarrow \ell$ we have $f_{\pi}\in \mathcal{M}$ where $$f_{\pi}: (x_0,\dots,x_{\ell-1})\mapsto f(x_{\pi(0)},\dots,x_{\pi(k-1)}).$$ For $f,g\in \mathcal{M}$ and for $\pi: k\rightarrow \ell$ we sometimes write $f\xrightarrow{\pi}g$ to indicate $g=f_{\pi}$.
	
	We say that a minion $\mathcal{M}$ has \emph{a finite domain} if $A$ and $B$ can be chosen to be finite.
	
	If $\mathcal{M}$ and $\mathcal{N}$ are minions, then a map $\xi: \mathcal{M}\rightarrow \mathcal{N}$ is called a \emph{minion homomorphism} if $\xi$ preserves the arities of functions, and for all $k$-ary $f\in \mathcal{M}$ and $\pi: k\rightarrow \ell$ we have $\xi(f_{\pi})=(\xi(f))_{\pi}$.
	
	We say that a map $f:A^k\rightarrow B$ is a \emph{polymorphism} of a PCSP template $(\fa,\fb)$ if $f$ is a homomorphism from $\fa^k$ to $\fb$. The set of polymorphisms of $(\fa,\fb)$ is denoted by $\pol(\fa,\fb)$. We write $\pol(\fa)$ for $\pol(\fa,\fa)$.
	
	We write $\proj$ for the set of all projections on $2=\{0,1\}$ (which is clearly a minion).
\end{definition}

	Note that the set of polymorphisms of a promise template always forms a minion. The main correspondence between pp-constructions and minion homomorphisms is the following.

\begin{theorem}[\cite{barto2021algebraic}, Theorem 4.12]\label{minions}
	Let $(\fa,\fb)$ and $(\fc,\fd)$ be finite promise templates. Then the following are equivalent.
\begin{enumerate}
\item $(\fa,\fb)$ pp-constructs $(\fc,\fd)$.
\item There exist a minion homomorphism from $\pol(\fa,\fb)$ to $\pol(\fc,\fd)$.
\end{enumerate}
\end{theorem}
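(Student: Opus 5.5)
The proof goes in two directions. For (1)$\Rightarrow$(2) I will build the minion homomorphism directly from a pp-construction. For (2)$\Rightarrow$(1) I will use a "free structure" construction: the minion $\pol(\fa,\fb)$ is itself encoded as a pp-power of $(\fa,\fb)$, and the given minion homomorphism then supplies the homomorphism needed for a homomorphic relaxation. Throughout I assume equality is available in pp-formulas.

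\emph{(1)$\Rightarrow$(2).} Minion homomorphisms compose, so it suffices to treat the two steps of a pp-construction separately.

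First, let $(\fc,\fd)$ be an $n$th pp-power of $(\fa,\fb)$. For a $k$-ary $f\in\pol(\fa,\fb)$, define $\xi(f)\colon (A^n)^k\to B^n$ by applying $f$ in each of the $n$ coordinates. To see that $\xi(f)\in\pol(\fc,\fd)$, take $k$ tuples in $R^{\fc}$, each witnessed by values of the quantified variables of $\varphi_R$ in $\fa$. Applying $f$ to these witnesses yields witnesses in $\fb$, because $f$ is a homomorphism $\fa^k\to\fb$ and every atom of $\varphi_R$ is preserved. Minors commute with coordinatewise application, so $\xi(f_\pi)=\xi(f)_\pi$.

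Second, let $(\fc',\fd')$ be a homomorphic relaxation via $g\colon\fc'\to\fc$ and $h\colon\fd\to\fd'$. Then $f\mapsto h\circ f\circ g^k$ maps $\pol(\fc,\fd)$ to $\pol(\fc',\fd')$ and clearly preserves minors.

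\emph{(2)$\Rightarrow$(1).} Let $\xi\colon\pol(\fa,\fb)\to\pol(\fc,\fd)$ be a minion homomorphism. Index coordinates by $C$ and set $n=|A|^{|C|}$, so that an element of $A^n$ is a function $A^C\to A$ and an element of $B^n$ is a function $A^C\to B$.

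For each relation $R$ of arity $r$, enumerate $R^{\fc}=\{\bar c^1,\dots,\bar c^m\}$ and let $\pi_i\colon m\to C$ send $j\mapsto c^j_i$. Let $\varphi_R(x_1,\dots,x_r)$, where each $x_i$ stands for the variables $x_i^{a}$ with $a\in A^C$, be the pp-formula
\[
\exists (y_t)_{t\in A^m}\ \Big(\bigwedge S(y_{t_1},\dots,y_{t_s})\ \wedge\ \bigwedge_{i\le r,\ a\in A^C} x_i^{a}=y_{a\circ\pi_i}\Big).
\]
The first conjunction ranges over all relations $S$ of $\fa$ and all $(t_1,\dots,t_s)\in S^{\fa^m}$. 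Interpreted in $\fa$, this formula says that $(f_1,\dots,f_r)$ are the $\pi_i$-minors of some map $y\colon A^m\to A$ preserving the relations of $\fa$. Interpreted in $\fb$, it says the same with $y\in\pol(\fa,\fb)$ of arity $m$. Denote the resulting pp-power by $(\fc',\ff)$.

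It remains to show that $(\fc,\fd)$ is a homomorphic relaxation of $(\fc',\ff)$. This needs two homomorphisms.

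\begin{enumerate}[(i)]
\item $\fc\to\fc'$: send $c\mapsto$ the projection $a\mapsto a(c)$. For $\bar c^j\in R^{\fc}$, take $y$ to be the $j$th projection on $A^m$. This $y$ preserves all relations of $\fa$, and its $\pi_i$-minor is the projection to $c^j_i$, as required.
\item $\ff\to\fd$: send a function $f\colon A^C\to B$ that lies in $\pol(\fa,\fb)$ to $\xi(f)(\mathrm{id}_C)$, where $\mathrm{id}_C\in C^C$; send all other functions arbitrarily. Every entry of a tuple in $R^{\ff}$ is a minor $y_{\pi_i}$ of some $y\in\pol(\fa,\fb)$, hence is itself a polymorphism, so only the first clause matters on relations. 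Then $\xi(y_{\pi_i})(\mathrm{id}_C)=\xi(y)_{\pi_i}(\mathrm{id}_C)=\xi(y)(c^1_i,\dots,c^m_i)$. Since $\xi(y)\in\pol(\fc,\fd)$ and the columns $\bar c^1,\dots,\bar c^m$ all lie in $R^{\fc}$, the resulting $r$-tuple lies in $R^{\fd}$.
\end{enumerate}

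I expect the main obstacle to be the bookkeeping in this second direction. The points to get right are: correct indexing of the minor maps $\pi_i$; checking that the $\fb$-side interpretation of $\varphi_R$ really forces $y$ to be a polymorphism of $(\fa,\fb)$ rather than of $\fa$; and noting that elements of $\ff$ that are not polymorphisms never occur in relations. Degenerate cases, such as empty relations or nullary symbols, should be checked separately, but they are trivial.
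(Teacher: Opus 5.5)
Your argument is correct, and it is the standard proof from the cited source; the paper gives no proof of its own and only cites it. In that proof, pp-powers and relaxations induce minion homomorphisms coordinatewise, and conversely your pp-power $(\fc',\ff)$ is the one whose $\fb$-side realises the free structure $\free_{\pol(\fa,\fb)}(\fc)$, with $\fc\to\fc'$ via projections and $\ff\to\fd$ via $f\mapsto\xi(f)(\id_C)$.

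The one point to tighten is the case $R^{\fc}=\emptyset$, which is not trivial for your formula. With $m=0$, $\xi$ is undefined on nullary functions. Moreover, $R^{\ff}$ is nonempty as soon as $\fb$ has an element $b$ with $(b,\dots,b)\in S^{\fb}$ for all $S$, while $R^{\fd}$ may be empty. So you must take $\varphi_R=\bot$, which requires admitting $\bot$ as an atomic pp-formula. Without it the equivalence genuinely fails: take $\fa=\fb$ a single point with a loop and $\fc=\fd$ a single point with an empty relation.
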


	Note that Theorem~\ref{minions} also implies that the pp-constructability relation is transitive which is not immediate from our definition. We mention that the non-promise version of Theorem~\ref{minions} (i.e. when $\fa=\fb$ and $\fc=\fd$) was already shown in~\cite{wonderland}.

\begin{definition}
	We say that a promise template $(\fa,\fb)$ is \emph{omniexpressive} if $(\fa,\fb)$ pp-constructs all finite promise templates. We say that $\fa$ is \emph{omniexpressive} if $(\fa,\fa)$ is.
\end{definition}

	Using this notion, an important special case of Theorem~\ref{minions} is the following.
	
\begin{corollary}\label{hardness}
	Let $(\fa,\fb)$ be a finite promise template. Then the following are equivalent.
\begin{enumerate}[(1)]
\item\label{it:omni} $(\fa,\fb)$ is omniexpressive.
\item\label{it:k3} $(\fa,\fb)$ pp-constructs $(K_3,K_3)$.
\item\label{it:k3s} $(\fa,\fb)$ pp-constructs $(K_3^*,K_3^*)$ where $K_3^*$ denote the expansion of $K_3$ by all constants.
\item\label{it:mtop} There exists a minion homomorphism from $\pol(\fa,\fb)$ to $\proj$.
\end{enumerate}
\end{corollary}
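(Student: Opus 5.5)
The plan is to go around the cycle \ref{it:omni}$\Rightarrow$\ref{it:k3s}$\Rightarrow$\ref{it:k3}$\Rightarrow$\ref{it:mtop}$\Rightarrow$\ref{it:omni}. Every translation between pp-constructions and minion homomorphisms goes through Theorem~\ref{minions}. Transitivity of pp-constructability, which follows from Theorem~\ref{minions} by composing minion homomorphisms, is used freely.

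The implication \ref{it:omni}$\Rightarrow$\ref{it:k3s} is immediate, since $(K_3^*,K_3^*)$ is a finite promise template. For \ref{it:k3s}$\Rightarrow$\ref{it:k3}, I would note that $(K_3,K_3)$ is a first pp-power of $(K_3^*,K_3^*)$: take $n=1$ and let $\varphi_E$ be the atomic formula for the edge relation, which simply forgets the constants. So $(K_3^*,K_3^*)$ pp-constructs $(K_3,K_3)$, and transitivity gives the claim.

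For \ref{it:mtop}$\Rightarrow$\ref{it:omni}, let $(\fc,\fd)$ be any finite promise template and fix a homomorphism $h:\fc\to\fd$. Define $\xi:\proj\to\pol(\fc,\fd)$ by $\xi(\pi_i^k)=h\circ\pi_i^k$, where the right-hand projection acts on $C^k$.
\begin{itemize}
\item Each $h\circ\pi_i^k$ is a homomorphism $\fc^k\to\fd$, because $\pi_i^k:\fc^k\to\fc$ is a homomorphism.
\item $\xi$ preserves arities and commutes with minors, since $(\pi_i^k)_\pi=\pi_{\pi(i)}^\ell$ on both sides.
\end{itemize}
Composing with the assumed minion homomorphism $\pol(\fa,\fb)\to\proj$ gives a minion homomorphism $\pol(\fa,\fb)\to\pol(\fc,\fd)$. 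By Theorem~\ref{minions}, $(\fa,\fb)$ then pp-constructs $(\fc,\fd)$.

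For \ref{it:k3}$\Rightarrow$\ref{it:mtop}, I would first show that $(K_3,K_3)$ pp-constructs $(K_3^*,K_3^*)$, and then compute $\pol(K_3^*)$. For the pp-construction, use a third pp-power with a fixed triangle $(x^1,x^2,x^3)$. The constant $c_j$ is encoded by a pp-formula forcing the tuple to be $(x^j,\dots)$ inside the triangle. One then checks that the resulting pp-power is homomorphically equivalent to $K_3^*$; this works because every endomorphism of $K_3$ is an automorphism, i.e.\ $K_3$ is a core. This is the standard ``cores with constants'' argument and I expect it to be routine.

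Next I would show that $\pol(K_3^*)$ consists exactly of the projections on $\{0,1,2\}$. These are the idempotent polymorphisms of $K_3$. This is the main obstacle. I would prove it by the classical argument: an idempotent polymorphism $f$ of $K_3$ of arity $k$ restricted to tuples from a two-element set must avoid the third colour. Combining this with adjacency constraints in $K_3^k$ shows that $f$ depends on a single coordinate, so it is a projection. Alternatively, I would cite the known result that $K_3$ has only essentially unary polymorphisms.

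Finally, the projections on a $3$-element set form a minion isomorphic to $\proj$ via $\pi_i^k\mapsto\pi_i^k$. Composing the minion homomorphisms $\pol(\fa,\fb)\to\pol(K_3)\to\pol(K_3^*)\cong\proj$, where the first two come from Theorem~\ref{minions}, yields \ref{it:mtop} and closes the cycle.
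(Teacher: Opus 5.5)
Your proposal is correct and uses the same ingredients as the paper's proof: Theorem~\ref{minions}, the core-with-constants pp-construction of $(K_3^*,K_3^*)$ from $(K_3,K_3)$, and the fact that $\pol(K_3^*)$ contains only projections. Your explicit step (4)$\Rightarrow$(1), via $\pi_i^k\mapsto h\circ\pi_i^k$ for a homomorphism $h\colon\fc\to\fd$, is a welcome addition, since the paper's write-up only records (4)$\Rightarrow$(2) and never explicitly returns to (1).

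Your sketch is loose in exactly the two places the paper handles by citation, so cite them as well. First, knowing that an idempotent polymorphism maps $\{a,b\}^k$ into $\{a,b\}$ only shows it is idempotent and self-dual there, which majority also satisfies, so by itself this does not give projections. Second, a pp-power whose elements carry a vertex together with a full triangle frame needs four coordinates rather than three; alternatively, use an ordered edge as the frame, on which $\operatorname{Aut}(K_3)$ acts regularly.
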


\begin{proof}
	The implication~\ref{it:omni}$\rightarrow$~\ref{it:k3} is trivial and the implication~\ref{it:mtop}$\rightarrow$~\ref{it:k3} follows from Theorem~\ref{minions}. By Lemma 3.9 in~\cite{wonderland} we know that $K_3$ pp-constructs $K_3^*$ since $K_3$ is a core structure. This implies the equivalence of items~\ref{it:k3} and~\ref{it:k3s}. Finally, the equivalence of items~\ref{it:k3s} and~\ref{it:mtop} follows from Theorem~\ref{minions} again and from the fact that all polymorphisms of $K_3^*$ are projections (see, for instance~\cite{bodirsky2021complexity}, Proposition 6.1.43).	
\end{proof}

\begin{remark}
	The term \emph{omniexpressive} itself was only introduced recently in~\cite{brunar2025janus} but the concept itself already appeared in many different places and forms in the literature, many of which are also cited in this article.
\end{remark}

\begin{remark}\label{logspace}
	We know that if $(\fa,\fb)$ pp-constructs $(\fc,\fd)$ then $\pcsp(\fc,\fd)$ is LOGSPACE-reducible to $\pcsp(\fc,\fd)$. Since $\csp(K_3)$, i.e., 3-colouring of graphs is {\npc}, this means that omniexpressivity implies {\nph}ness of the corresponding PCSP. By the finite-domain CSP dichotomy theorem~\cite{BulatovFVConjecture,zhuk2020proof} we know that if $\fa$ is not omniexpressive then $\csp(\fa)$ is tractable, but this is no longer the case in the promise setting, as we will see in the next subsection.
\end{remark}

\subsection{Minor conditions}

	A further characterization of the existence of minion homomorphisms, and in turn pp-constructability, can be given in terms of minor conditions.
	
\begin{definition}
	A \emph{minor condition} is a finite set $\Sigma$ of identities of the form $f_{\sigma}\approx g_{\tau}$ where $f$ and $g$ are functional symbols with some designated arities $k$ and $\ell$, respectively, and $\sigma\in \prescript{k}{}{n},\tau\in \prescript{\ell}{}{n}$ for some $n\in \omega$. 
	
	A minor condition $\Sigma$ is satisfied in a minion $\mathcal{M}$ if and only if we can find an assignment $f\mapsto f^{\mathcal{M}}\in \mathcal{M}$ for symbols $f$ occurring in $\Sigma$ such that whenever $f_{\sigma}\approx g_{\tau}$ is an identity in $\Sigma$ then $f_{\sigma}^{\mathcal{M}}=g_{\tau}^{\mathcal{M}}$ holds.
	
	A minor condition is called \emph{trivial} if it is satisfied by $\proj$. 
\end{definition}

\begin{remark}
Formally speaking, the number $n$ is not specified by the identity $f_{\sigma}\approx g_{\tau}$ as in the first line above, but the choice of its value does not change the satisfiability of $\Sigma$ because we can add or drop dummy variables in $f_{\sigma}^{\mathcal{M}}$ or $g_{\tau}^{\mathcal{M}}$ for each of the identities $f_{\sigma}\approx g_{\tau}$.
\end{remark}

	The correspondence between minion homomorphisms and minor conditions in the case of finite-domain minions is the following.
	
\begin{theorem}[\cite{barto2021algebraic}, Theorem 4.12]\label{minors}
	Let $\mathcal{M}$ and $\mathcal{N}$ be minions with finite domains. Then there exists a minion homomorphism from $\mathcal{M}$ to $\mathcal{N}$ if and only if all minor conditions satisfied by $\mathcal{M}$ are also satisfied by $\mathcal{N}$.
\end{theorem}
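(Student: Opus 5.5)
We prove the two directions separately, in both cases working with the free structure (the finite-domain analogue of a relatively free algebra) attached to a minor condition.

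\emph{Minion homomorphism $\Rightarrow$ minor conditions.} Suppose $\xi:\mathcal{M}\to\mathcal{N}$ is a minion homomorphism and $\Sigma$ is satisfied in $\mathcal{M}$ via $f\mapsto f^{\mathcal{M}}$. We assign $f\mapsto \xi(f^{\mathcal{M}})$. Since $\xi$ preserves arities and commutes with minors, each identity $f_\sigma\approx g_\tau$ gives
$$\xi(f^{\mathcal{M}})_\sigma=\xi(f^{\mathcal{M}}_\sigma)=\xi(g^{\mathcal{M}}_\tau)=\xi(g^{\mathcal{M}})_\tau.$$
So $\Sigma$ holds in $\mathcal{N}$. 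This direction does not use finiteness.

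\emph{Minor conditions $\Rightarrow$ minion homomorphism.} Let $\mathcal{M}$ consist of functions $A^k\to B$ with $A,B$ finite, and put $N=|A|$. Every $k$-ary $f\in\mathcal{M}$ is a minor of an $A^N$-indexed function: take $\pi:k\to A^N$, or more concretely $\pi(i)$ is the $i$th column of the $N\times k$ matrix listing all of $A^N$... The standard construction is as follows.
\begin{enumerate}
\item Fix an enumeration $a^0,\dots,a^{m-1}$ of $A^N$ and consider $F=\mathcal{M}^{(m)}$, the set of $m$-ary members of $\mathcal{M}$. This set is finite, with $|F|\le|B|^{|A|^m}$.
\item Build a minor condition $\Sigma_{\mathcal{M}}$. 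It has one symbol $f_g$ for every $g\in F$. For every $g,h\in F$ and every map $\sigma$ witnessing $g_\sigma = h_\tau$ on suitable coordinates, record that identity. More precisely, we take all identities $f_{g,\sigma}\approx f_{h,\tau}$ with $\sigma,\tau: m\to n$ for $n\le |A|^N$ that hold in $\mathcal{M}$. Since there are finitely many such $\sigma,\tau$, $\Sigma_{\mathcal{M}}$ is a finite minor condition, and $\mathcal{M}$ satisfies it via the canonical assignment.
\item By hypothesis $\Sigma_{\mathcal{M}}$ is satisfied in $\mathcal{N}$ by some assignment $f_g\mapsto \hat g$. We define $\xi$ on $k$-ary $f$ as follows. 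Let $\rho_f: m\to k$ be the map with $f=g_{\rho}$ for the $m$-ary $g$ given by $g(x_0,\dots,x_{m-1}) = f(\text{rows})$. Concretely, we choose $g\in F$ and $\rho:m\to k$ with $g_\rho=f$; this is possible because we can take $g(y)=f(y_{j_0},\dots,y_{j_{k-1}})$ where $j_i$ indexes the $i$th coordinate pattern. Then we set $\xi(f)=\hat g_\rho$.
\item Check that $\xi$ is well defined and commutes with minors. If $g_\rho=g'_{\rho'}$, then this is an identity in $\Sigma_{\mathcal{M}}$, provided the arity is at most $|A|^N$. We may reduce to that case because any $k$-ary function on $A$ is determined by, and is a minor of, its restriction to at most $|A|^N$ distinct columns. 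The coordinate-collapsing map that identifies equal columns of the $N$-rowed matrix of all tuples achieves this reduction, and it makes the relevant identities hold in $\mathcal{N}$ as well. Minor-commutation then follows from $(g_\rho)_\pi=g_{\pi\circ\rho}$.
\end{enumerate}

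The main obstacle is this arity bookkeeping in step 4. The statement is about arbitrary arities $k$, but $\Sigma_{\mathcal{M}}$ must be finite. The fix we would try first is to take the universal arity $m=|A|^{|A|^{?}}$ large enough: every $k$-ary $f\in\mathcal{M}$ factors as $g_\rho$ with $g$ of arity $m=|A|^{|A|}$... Precisely, we use $m=|A|^{n}$ where $n$ is chosen so that all rows of a $k$-column evaluation matrix fit, and we argue that two such representations $g_\rho$, $g'_{\rho'}$ of the same $f$ can be compared inside a bounded arity $n'=|A|^{|A|^m}$. Any coincidence of functions of arity above $n'$ is witnessed already by a minor to arity $n'$, via the map sending each coordinate to its column type. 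This is exactly the compactness-free finite witness argument of \cite{barto2021algebraic}.
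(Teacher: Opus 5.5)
Your ``only if'' direction is correct; it is the routine calculation the paper alludes to. The ``if'' direction, however, fails at step 3. There you claim that every $k$-ary $f\in\mathcal{M}$ can be written as $f=g_\rho$ with $g\in\mathcal{M}$ of the fixed arity $m=|A|^{|A|}$ and $\rho\colon m\to k$. But $g_\rho(x_0,\dots,x_{k-1})=g(x_{\rho(0)},\dots,x_{\rho(m-1)})$ depends on at most $m$ of its variables. Members of $\mathcal{M}$ can depend on arbitrarily many variables (e.g.\ $x_0\wedge\dots\wedge x_{k-1}$ on $\{0,1\}$). The column-collapsing map $\pi\colon k\to A^N$ produces a minor $f_\pi$ \emph{of} $f$, which records the values of $f$ on $N$ chosen input rows. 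It does not exhibit $f$ as a minor of a bounded-arity function. So your $\xi$ is undefined on most of $\mathcal{M}$, and the well-definedness argument in step 4 rests on the same reversed claim.

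A larger arity bound does not fix this: no single finite minor condition determined by $\mathcal{M}$ alone can work. Let $\mathcal{M}$ be the minion of all maps $\{0,1\}^k\to\{0,1\}$ of the form $\bigwedge_{i\in S}x_i$ with $\emptyset\neq S\subseteq k$. Let $\mathcal{N}_N\subseteq\mathcal{M}$ consist of those with $|S|\leq N$; this is a minion, since taking the minor along $\pi$ corresponds to $S\mapsto\pi(S)$. Any finite condition satisfied in $\mathcal{M}$ whose symbols have arity at most $N$ is satisfied in $\mathcal{N}_N$ by the same assignment; in particular your $\Sigma_{\mathcal{M}}$ is, once $N\geq m$. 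Yet there is no minion homomorphism $\mathcal{M}\to\mathcal{N}_N$: the image of the fully symmetric $x_0\wedge\dots\wedge x_N$ would be an $(N+1)$-ary member of $\mathcal{N}_N$ invariant under all coordinate permutations, and there is none.

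Relatedly, your argument never uses that $\mathcal{N}$ has finite domain, and without that hypothesis the statement is false. A disjoint union of all the $\mathcal{N}_N$, realized on an infinite domain, satisfies every minor condition satisfied by $\mathcal{M}$. It receives no minion homomorphism from $\mathcal{M}$, because every member of $\mathcal{M}$ has $x_0$ as a unary minor, so a homomorphism would land in a single summand.

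The bound has to come from the target, and there are two ways to arrange this.
\begin{enumerate}
\item Compactness. For each $n$, the condition recording all minor identities among members of $\mathcal{M}$ of arity at most $n$ is finite. So $\mathcal{N}$ admits an arity-preserving map on these members that respects those identities. Since every $\mathcal{N}^{(k)}$ is finite, K\"onig's lemma yields a coherent sequence of such maps, whose union is a minion homomorphism.
\item Free structures, for $\mathcal{N}=\pol(\fc,\fd)$. Here $\free_{\mathcal{M}}(\fc)$ is finite, and the existence of a homomorphism $h\colon\free_{\mathcal{M}}(\fc)\to\fd$ is captured by one finite minor condition that $\mathcal{M}$ satisfies canonically. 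One then sets $\xi(f)(c_0,\dots,c_{k-1})\coloneqq h(f_c)$ for $c\colon k\to C$. Arbitrary arities are thus handled through $|C|$-ary minors \emph{of} $f$, not by writing $f$ as a minor of something.
\end{enumerate}
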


	It is worth pointing out that ``only if'' direction in Theorem~\ref{minors} follows from a routine calculation (and it does not even require finiteness), so the real strength of this theorem lies in the other direction. In practice, the non-existence of a minion homomorphism from one minion to another (and in turn non-pp-constructability of structures) is often shown by finding a minor condition which separates the two minions. Theorem~\ref{minors} tells us that this is always possible in the case of finite-domain minions. One of the special cases of this observation is the following.
	
\begin{corollary}\label{hardness2}
	Let $(\fa,\fb)$ be a finite promise template. Then $(\fa,\fb)$ is omniexpressive if and only if all minor conditions satisfied by $\pol(\fa,\fb)$ are trivial.
\end{corollary}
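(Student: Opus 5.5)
This follows by combining Corollary~\ref{hardness} with Theorem~\ref{minors}, applied with $\mathcal{M}=\pol(\fa,\fb)$ and $\mathcal{N}=\proj$. Both are minions with finite domains: $\pol(\fa,\fb)$ because $\fa$ and $\fb$ are finite, and $\proj$ because its domain is $2$. So Theorem~\ref{minors} applies to this pair.

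For the forward direction, suppose $(\fa,\fb)$ is omniexpressive. By Corollary~\ref{hardness} (item~\ref{it:omni} $\Rightarrow$ item~\ref{it:mtop}) there is a minion homomorphism $\xi:\pol(\fa,\fb)\to\proj$. Let $\Sigma$ be a minor condition satisfied in $\pol(\fa,\fb)$ via $f\mapsto f^{\mathcal{M}}$. Then $f\mapsto \xi(f^{\mathcal{M}})$ satisfies $\Sigma$ in $\proj$, since
\[
\xi(f^{\mathcal{M}}_{\sigma})=\xi(f^{\mathcal{M}})_{\sigma}.
\]
Hence $\Sigma$ is trivial. This is just the routine ``only if'' part of Theorem~\ref{minors}.

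For the converse, suppose every minor condition satisfied by $\pol(\fa,\fb)$ is trivial, i.e.\ satisfied by $\proj$. This is exactly the hypothesis of the ``if'' direction of Theorem~\ref{minors}, which yields a minion homomorphism $\pol(\fa,\fb)\to\proj$. Corollary~\ref{hardness} (item~\ref{it:mtop} $\Rightarrow$ item~\ref{it:omni}) then shows that $(\fa,\fb)$ is omniexpressive.

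No step here is a real obstacle; all the depth sits in the cited ``if'' direction of Theorem~\ref{minors}. The only point to check is that $\proj$, defined on $\{0,1\}$, counts as a finite-domain minion in the sense required by Theorem~\ref{minors}, which it does.
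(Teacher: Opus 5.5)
Your proposal is correct and is exactly the paper's argument: the paper proves this as a direct consequence of Corollary~\ref{hardness} (omniexpressivity is equivalent to a minion homomorphism $\pol(\fa,\fb)\to\proj$) together with Theorem~\ref{minors} applied to the finite-domain minions $\pol(\fa,\fb)$ and $\proj$. Your explicit check of the routine direction and of the finite-domain hypothesis only spells out what the paper leaves implicit.
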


\begin{proof}
	Direct consequence of Corollary~\ref{hardness} and Theorem~\ref{minors}.
\end{proof}

	In this paper we are considering the following types of operations defined by minor conditions.

\begin{definition}\label{def:minor}
	A map $f:A^k\rightarrow B$ is called
\begin{itemize}
\item \emph{cyclic} if $k\geq 2$, and it satisfies the identity $f(x_0,\dots,x_{k-1})=f(x_1,\dots,x_{k-1},x_0)$,
\item a \emph{area-rare} (or \emph{4-ary Siggers}) operation if $k=4$ and it satisfies the identity $f(a,r,e,a)=f(r,a,r,e)$,
\item a \emph{Siggers} operation if $k=6$, and it satisfies the identity $f(x,y,x,z,y,z)=f(y,x,z,x,z,y)$,
\item an \emph{Ol\v{s}\'{a}k} operation if $k=6$, and it satisfies the minor condition $f(x,x,y,y,y,x)=f(x,y,x,y,x,y)=f(y,x,x,x,y,y)$.
\end{itemize}
\end{definition}

	It is easy to check that all minor conditions in Definition~\ref{def:minor} are nontrivial, thus by Corollary~\ref{hardness2} the existence of any of the operations above in $\pol(\fa,\fb)$ implies that $(\fa,\fb)$ is not omniexpressive.
	
	The following proposition is folklore, for the sake of completeness we provide a proof.
	
\begin{proposition}\label{cycl}
	Let $\mathcal{M}$ be any minion. Then
\begin{enumerate}[(1)]
\item\label{it:cyc_area} If $\mathcal{M}$ contains a cyclic operation of any arity then $\mathcal{M}$ also contains a area-rare operation.
\item\label{it:area_so} If $\mathcal{M}$ contains an area-rare operation then it contains both a Siggers and an Ol\v{s}\'{a}k operation.
\end{enumerate}
\end{proposition}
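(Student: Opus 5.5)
For \ref{it:area_so} I would give explicit minors, and for \ref{it:cyc_area} I would first reduce the arity of the cyclic operation and then give an explicit minor. Throughout I use that a minor condition is an identity in variables. So once $g$ is defined as a minor of $f$ and each side of the target identity is rewritten as $f$ applied to a word, any substitution of variables into the defining identity of $f$, including non-injective ones, gives a valid consequence.

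\emph{Part \ref{it:area_so}.} Let $f\in\mathcal{M}$ be area-rare.

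For the Siggers operation, set $g(x_1,\dots,x_6)\coloneqq f(x_1,x_2,x_6,x_3)$. Then
\[ g(x,y,x,z,y,z)=f(x,y,z,x), \qquad g(y,x,z,x,z,y)=f(y,x,y,z). \]
These two terms are equal by the area-rare identity with $a=x$, $r=y$, $e=z$.

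For the Ol\v{s}\'{a}k operation, set $h(x_1,\dots,x_6)\coloneqq f(x_2,x_5,x_4,x_6)$. Then the three sides of the Ol\v{s}\'{a}k condition become
\[ f(x,y,y,x),\qquad f(y,x,y,y),\qquad f(x,y,x,y). \]
The area-rare identity with $(a,r,e)=(x,y,y)$ gives $f(x,y,y,x)=f(y,x,y,y)$. With $(a,r,e)=(y,x,y)$ it gives $f(y,x,y,y)=f(x,y,x,y)$. Both $g$ and $h$ lie in $\mathcal{M}$ because $\mathcal{M}$ is minor-closed.

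\emph{Part \ref{it:cyc_area}.} Let $c\in\mathcal{M}$ be cyclic of arity $n\geq 2$.

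First I reduce to prime arity. Let $p$ be a prime dividing $n$, and let $c'$ be the minor of $c$ along $i\mapsto i \bmod p$, that is, $c'(x_0,\dots,x_{p-1})=c(x_0,\dots,x_{p-1},x_0,\dots,x_{p-1},\dots)$. Shifting $x_0,\dots,x_{p-1}$ cyclically shifts the argument word of $c$ cyclically, so $c'$ is a $p$-ary cyclic operation in $\mathcal{M}$.

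Next I look for a map $u\colon p\to 4$ and set $f\coloneqq c'_u$, so $f(y_1,\dots,y_4)=c'(y_{u(0)},\dots,y_{u(p-1)})$. Let $\sigma_1$ be the substitution $1,2,3,4\mapsto a,r,e,a$ and $\sigma_2$ the substitution $1,2,3,4\mapsto r,a,r,e$. Then $f(a,r,e,a)=c'(\sigma_1(u))$ and $f(r,a,r,e)=c'(\sigma_2(u))$. By cyclicity of $c'$, $f$ is area-rare as soon as $\sigma_1(u)$ is a cyclic rotation of $\sigma_2(u)$ as words over $\{a,r,e\}$.

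\begin{itemize}
\item For $p=2$, take $u=12$. Then $\sigma_1(u)=ar$ and $\sigma_2(u)=ra$, which is a rotation.
\item For odd $p$, write $s=(p-3)/2$ and take $u=2\,(12)^s\,3\,4$. Then $\sigma_1(u)=r\,(ar)^s\,e\,a$. Rotating it by one position gives $a\,r\,(ar)^s\,e=(ar)^{s+1}e$. On the other side, $\sigma_2(u)=a\,(ra)^s\,r\,e=(ar)^{s+1}e$, so the two words agree.
\end{itemize}

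The only real obstacle is finding this word $u$ in the odd case. Counting letters forces $n_3=n_4$ and $n_2=n_1+n_3$, where $n_i$ is the number of occurrences of $i$ in $u$. A concrete word realizing this then has to be found by trial; once it is found, verifying the rotation is immediate.
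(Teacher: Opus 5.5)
Your proof is correct; I checked every identity. It follows the paper's strategy of writing down explicit minors and verifying them, but in part (1) you decompose the problem differently.

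In part (2), your Siggers minor $f(x_1,x_2,x_6,x_3)$ is exactly the paper's. Your Ol\v{s}\'{a}k minor $f(x_2,x_5,x_4,x_6)$ differs from the paper's $f(x_5,x_2,x_1,x_3)$, but it is verified the same way, by chaining two specializations of the area-rare identity.

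In part (1), the paper works directly with the cyclic operation of arity $n$ and splits into three cases according to $n \bmod 3$. It uses block words, for example making the two sides of the area-rare identity $r^{k+1}a^{k+1}e^{k-1}$ and its rotation $a^{k+1}e^{k-1}r^{k+1}$. You first pass to a cyclic minor of prime arity along $i\mapsto i \bmod p$, and then need only two cases, with a single alternating word $2\,(12)^s\,3\,4$ covering all odd arities. This costs you one extra, easy verification. In return, your first step proves a fact the paper only asserts in the remark at the end of Section~\ref{sect:ac_fin}: a minion containing a cyclic operation contains one of prime arity.

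Primality is more than your construction uses; only parity matters. You could apply the alternating word directly when $n$ is odd, and take the binary minor only when $n$ is even.
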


\begin{proof}
	(\ref{it:cyc_area}) Let $f\in \mathcal{M}$ be a cyclic operation of arity $3k+r$ where $r\in \{0,1,2\}$. If $r=0$ then let $$g(x,y,z,w)\coloneqq f(\underbrace{y,\dots,y}_{k \text{ times}},\underbrace{z,\dots,z}_{k \text{ times}},\underbrace{w,\dots,w}_{k \text{ times}}).$$ Then we have
\[
g(a,r,e,a)=f(\underbrace{r,\dots,r}_{k \text{ times}},\underbrace{e,\dots,e}_{k \text{ times}},\underbrace{a,\dots,a}_{k \text{ times}})=f(\underbrace{a,\dots,a}_{k \text{ times}},\underbrace{r,\dots,r}_{k \text{ times}},\underbrace{e,\dots,e}_{k \text{ times}})=g(r,a,r,e).
\]
	If $r=1$ then $k\geq 1$. In this case let $$g(x,y,z,w)\coloneqq f(\underbrace{y,\dots,y}_{k+1 \text{ times}},\underbrace{w,\dots,w}_{k-1 \text{ times}},x,x,\underbrace{z,\dots,z}_{k-1 \text{ times}}).$$ Then
\begin{align*}
g(a,r,e,a)=&f(\underbrace{r,\dots,r}_{k+1 \text{ times}},\underbrace{a,\dots,a}_{k+1 \text{ times}},\underbrace{e,\dots,e}_{k-1 \text{ times}})\\
=&f(\underbrace{a,\dots,a}_{k+1 \text{ times}},\underbrace{e,\dots,e}_{k-1 \text{ times}},\underbrace{r,\dots,r}_{k+1 \text{ times}})=g(r,a,r,e).
\end{align*}
	Finally, if $r=2$ then let $$g(x,y,z,w)\coloneqq f(\underbrace{y,\dots,y}_{k+1 \text{ times}},\underbrace{w,\dots,w}_{k \text{ times}},x,\underbrace{z,\dots,z}_{k \text{ times}}).$$ Then
\begin{align*}
g(a,r,e,a)=&f(\underbrace{r,\dots,r}_{k+1 \text{ times}},\underbrace{a,\dots,a}_{k+1 \text{ times}},\underbrace{e,\dots,e}_{k \text{ times}})\\
=&f(\underbrace{a,\dots,a}_{k+1 \text{ times}},\underbrace{e,\dots,e}_{k \text{ times}},\underbrace{r,\dots,r}_{k+1 \text{ times}})=g(r,a,r,e).
\end{align*}

	(\ref{it:area_so}) Let $f$ be an operation satisfying the identity $f(a,r,e,a)=f(r,a,r,e)$. Then $g(x,y,z,u,v,w)\coloneqq g(x,y,w,z)$ is a Siggers operation and $h(x,y,z,u,v,w)\coloneqq g(v,y,x,z)$ is an Ol\v{s}\'{a}k operation as shown by the following calculation.
\begin{align*}
	g(x,y,x,z,y,z)&=f(x,y,z,x)=f(y,x,y,z)=g(y,x,z,x,z,y),\\
	h(x,x,y,y,y,x)&=f(y,x,x,y)=f(x,y,x,x)=h(x,y,x,y,x,y),\\
	h(x,y,x,y,x,y)&=f(x,y,x,x)=f(y,x,y,x)=h(y,x,x,x,y,y).\qedhere
\end{align*}
\end{proof}

	We mention that in the case of a single finite structure all the minor conditions above are equivalent. In fact, the following holds.
	
\begin{theorem}\label{minor_equiv}
	Let $\fa$ be a finite relational structure. Then the following are equivalent.
\begin{enumerate}[(1)]
\item\label{it:notomni} $\fa$ is not omniexpressive.
\item\label{it:nontrivi} $\pol(\fa)$ satisfies a nontrivial minor condition.
\item\label{it:siggers} $\fa$ has a Siggers polymorphism.
\item\label{it:olsak} $\fa$ has an Ol\v{s}\'{a}k polymorphism.
\item\label{it:area} $\fa$ has an area-rare polymorphism.
\item\label{it:cyclic} $\fa$ has a cyclic polymorphism.
\end{enumerate}
\end{theorem}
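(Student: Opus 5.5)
The plan is to close a cycle of implications. The easy implications come from results already stated, and the deep content is imported from the literature.

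First, (\ref{it:notomni})$\Leftrightarrow$(\ref{it:nontrivi}) is exactly Corollary~\ref{hardness2} applied to the template $(\fa,\fa)$, since $\pol(\fa)=\pol(\fa,\fa)$.

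Second, the chain (\ref{it:cyclic})$\Rightarrow$(\ref{it:area})$\Rightarrow$(\ref{it:siggers}) and (\ref{it:area})$\Rightarrow$(\ref{it:olsak}) is Proposition~\ref{cycl}, applied to the minion $\pol(\fa)$. Each of the conditions in (\ref{it:siggers}) and (\ref{it:olsak}) is a nontrivial minor condition, as remarked after Definition~\ref{def:minor}. Hence each of them implies (\ref{it:nontrivi}), and therefore (\ref{it:notomni}).

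It then remains to prove (\ref{it:notomni})$\Rightarrow$(\ref{it:cyclic}). This is precisely the ``only if'' direction of Theorem~\ref{thm:cyclic}, the Barto--Kozik cyclic terms theorem~\cite{Cyclic}, which is stated earlier in the paper and may be assumed. Combining these gives
\[
(\ref{it:cyclic})\Rightarrow(\ref{it:area})\Rightarrow(\ref{it:siggers}),(\ref{it:olsak})\Rightarrow(\ref{it:nontrivi})\Leftrightarrow(\ref{it:notomni})\Rightarrow(\ref{it:cyclic}),
\]
so all six conditions are equivalent.

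The only genuinely hard step is (\ref{it:notomni})$\Rightarrow$(\ref{it:cyclic}), and it is outsourced to Theorem~\ref{thm:cyclic}. Its proof relies on absorption theory and on the structure $\fa$ being a single finite structure, so that $\pol(\fa)$ is a clone and compositions are available. This is exactly why the equivalence fails for promise templates, where $\pol(\fa,\fb)$ is only a minion and the implications in Proposition~\ref{cycl} become strict.

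A minor subtlety remains. Theorem~\ref{thm:cyclic} yields a cyclic polymorphism only if some such polymorphism exists at all. If $\fa$ has an empty domain or other degenerate features, one should check that the statements are trivially consistent. For nonempty finite $\fa$ there is nothing to check beyond the cited result.
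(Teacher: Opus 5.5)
Your proposal is correct and follows the paper's proof essentially verbatim. Both use Corollary~\ref{hardness2} for (1)$\Leftrightarrow$(2), Proposition~\ref{cycl} for cyclic $\Rightarrow$ area-rare $\Rightarrow$ Siggers/Ol\v{s}\'{a}k, the nontriviality of those minor conditions to return to (2), and the Barto--Kozik cyclic terms theorem for (1)$\Rightarrow$(6). The paper additionally notes that~\cite{Cyclic} proves that hard direction only for idempotent $\pol(\fa)$, with the general case reduced to it as in Section 6.9 of~\cite{bodirsky2021complexity}; your closing remark about degenerate structures is not needed.
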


\begin{proof}
	
	The equivalence of items~\ref{it:notomni} and~\ref{it:nontrivi} follows from Corollary~\ref{hardness2}. The implications~\ref{it:cyclic}$\rightarrow$~\ref{it:area},~\ref{it:area}$\rightarrow$~\ref{it:olsak},~\ref{it:area}$\rightarrow$~\ref{it:siggers} follow from Proposition~\ref{cycl}, and the implications~\ref{it:olsak}$\rightarrow$~\ref{it:nontrivi} and~\ref{it:siggers}$\rightarrow$~\ref{it:nontrivi} are trivial.
	
	Finally, the implication~\ref{it:notomni}$\rightarrow$~\ref{it:cyclic} has been shown in~\cite{Cyclic} in the case when $\pol(\fa)$ is idempotent, i.e., it preserves all constant relations on $A=\dom(\fa)$. However, the idempotency assumption can be removed by some general arguments, see for instance Section 6.9 in~\cite{bodirsky2021complexity}.
\end{proof}

\begin{remark}
	The equivalence of items~\ref{it:notomni} and~\ref{it:siggers} in Theorem~\ref{minor_equiv} was already observed in~\cite{Siggers}. Ol\v{s}\'{a}k operations originally appeared in~\cite{olsak-idempotent} where it was shown that an idempotent algebra is Taylor if and only if it contains an Ol\v{s}\'{a}k term. From this, one can also easily derive the equivalence of items~\ref{it:siggers} and~\ref{it:olsak} above.
\end{remark}


	The following theorem summarises many of the known results about the existence of Siggers and Ol\v{s}\'{a}k polymorphisms of certain promise templates.
	
\begin{theorem}\label{graph_promise}
	Let $k,c\in \omega$ with $2\leq k\leq c$. Then the following hold.
\begin{enumerate}[(1)]
\item\label{it:komni} The promise template $(K_k,K_c)$ omniexpressive if and only if $3\leq k\leq c\leq 2k-2$.
\item\label{it:kolsak} The promise template $(K_k,K_c)$ has an Ol\v{s}\'{a}k polymorphism if and only if $c\geq 2k$ or $k=2$.
\item\label{it:ksiggers} The promise template $(K_k,K_c)$ has a Siggers polymorphism if and only if $k=2$.
\item\label{it:psiggers} A finite promise template has no Siggers polymorphism if and only if it pp-constructs $(K_3,K_d)$ for some $3\leq d\in \omega$.
\item\label{it:holsak} The promise template $(H_k,H_c)$ has no Ol\v{s}\'{a}k polymorphism.
\item\label{it:polsak} A finite promise template has no Ol\v{s}\'{a}k polymorphism if and only if it pp-constructs $(H_2,H_d)$ for some $2\leq d\in \omega$.
\end{enumerate}
\end{theorem}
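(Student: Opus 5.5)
Theorem~\ref{graph_promise} collects results that are mostly in the literature, so my plan is to prove directly the items that reduce to a short computation and to cite the remaining ones, checking that each cited statement matches our formulation.

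\emph{Direct items.} For~\ref{it:holsak}, I will fix distinct $x,y\in\{0,1\}\subseteq\dom(H_k)$ and consider the three $6$-tuples
$$(x,x,y,y,y,x),\quad (x,y,x,y,x,y),\quad (y,x,x,x,y,y).$$
Reading them column by column gives $(x,x,y)$, $(x,y,x)$, $(y,x,x)$, $(y,y,x)$, $(y,x,y)$, $(x,y,y)$. Each of these lies in $\nae$, so the triple of $6$-tuples lies in $\nae^{H_k^6}$. An Ol\v{s}\'ak polymorphism $f$ takes the same value on all three tuples, and that all-equal triple is not in $\nae^{H_c}$, which is a contradiction.

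For the ``only if'' half of~\ref{it:ksiggers}, let $k\geq 3$ and take three distinct colours $x,y,z$. The tuples $(x,y,x,z,y,z)$ and $(y,x,z,x,z,y)$ differ in every coordinate, so they are adjacent in $K_k^6$. A Siggers polymorphism would give them the same colour, producing a loop in $K_c$, which is impossible.

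For the positive direction with $k=2$, $(K_2,K_c)$ has a cyclic polymorphism of odd arity: $K_2\to K_c$ for $c\geq 2$, and $K_2$ itself has the ternary majority as a cyclic polymorphism. Composing it with $K_2\to K_c$ gives a cyclic polymorphism, and Proposition~\ref{cycl} then yields both Siggers and Ol\v{s}\'ak polymorphisms. This settles $k=2$ in~\ref{it:kolsak} and~\ref{it:ksiggers}.

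For~\ref{it:kolsak} with $c\geq 2k$, I will construct an explicit Ol\v{s}\'ak polymorphism $K_k^6\to K_{2k}$ by hand. The first attempt is a ``first-repeated-colour with tag'' rule: output the pair consisting of the colour that occurs most often (with ties broken by a fixed order) and one bit recording its parity pattern. I would verify the identities and adjacency directly, and fall back on the explicit construction in~\cite{barto2021algebraic} if this fails.

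\emph{Cited items.} In~\ref{it:komni}, omniexpressivity for $3\leq k\leq c\leq 2k-2$ comes from~\cite{brakensiek2016new} via Corollary~\ref{hardness}. Non-omniexpressivity for $c\geq 2k-1$ follows from the nontrivial minor condition satisfied by $\pol(K_k,K_{2k-1})$ exhibited in~\cite{barto2021algebraic}, together with Corollary~\ref{hardness2}. The ``only if'' part of~\ref{it:kolsak} for $3\leq k$ and $c\leq 2k-1$ is also proved there (and in~\cite{barto2022combinatorial}).

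The characterizations~\ref{it:psiggers} and~\ref{it:polsak} are the deep part. For the easy directions, I would observe that pp-constructions send polymorphisms to polymorphisms via a minion homomorphism (Theorem~\ref{minions}). Satisfaction of a minor condition transfers along minion homomorphisms, so if a template pp-constructs $(K_3,K_d)$ (resp.\ $(H_2,H_d)$), the computations above rule out a Siggers (resp.\ Ol\v{s}\'ak) polymorphism. The converse directions are the main obstacle. I would cite~\cite{barto2021algebraic} and~\cite{barto2022combinatorial} for them: there the free structure of the minion on the Siggers (resp.\ Ol\v{s}\'ak) pattern is shown to be pp-constructible, and its finite image yields $(K_3,K_d)$ (resp.\ $(H_2,H_d)$) with $d$ bounded by the size of $B^6$.
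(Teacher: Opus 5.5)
You handle $k=2$ exactly as the paper does. For item~(1), the ``only if'' of item~(2), and the hard directions of items~(4) and~(6), you rely on the literature, as the paper also does. The paper cites \cite{brakensiek2016new} for~(1), Lemma~6.4 and Proposition~10.1 of \cite{barto2021algebraic} for~(2), Theorem~6.9 of \cite{barto2021algebraic} for~(3)--(4), and Theorem~6.2 of \cite{barto2021algebraic} for~(5)--(6).

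Where you deviate, you gain something:
\begin{itemize}
\item Your computations for the ``only if'' of~(3) and for~(5) are correct, and each replaces a citation with two lines.
\item Your argument for~(5) works for every $k$ directly, so the paper's reduction to $k=2$ via homomorphic relaxation is not needed.
\item Your free-structure sketch for the converses of~(4) and~(6) is correct, and in fact short rather than deep. Having no Siggers (resp.\ Ol\v{s}\'{a}k) polymorphism means exactly that $\free_{\mathcal{M}}(K_3)$ is loopless (resp.\ that the relation of $\free_{\mathcal{M}}(H_2)$ contains no constant triple). So the identity map lands in $K_d$ (resp.\ $H_d$).
\end{itemize}
Your bound on $d$ is misstated. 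Here $d$ is the number of ternary (resp.\ binary) members of $\mathcal{M}$, hence at most $|B|^{|A|^3}$ (resp.\ $|B|^{|A|^2}$), not $|B^6|$. This is harmless, since only finiteness matters.

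The step that fails is your tentative explicit Ol\v{s}\'{a}k polymorphism $K_k^6\to K_{2k}$.
\begin{itemize}
\item If ties are broken by a fixed order of colours with $0$ first, take $(0,0,1,1,2,2)$, $(2,2,0,0,1,1)$ and $(1,1,2,2,0,0)$. They are pairwise adjacent in $K_k^6$ and all have plurality colour $0$, so one extra bit cannot give them three different values.
\item If ties are broken by position, then $(x,x,y,y,y,x)$ and $(y,x,x,x,y,y)$ are both $3{+}3$ ties starting with different letters, so the identities fail.
\end{itemize}
So, as written, your fallback citation is necessary.

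Here is a version of your idea that does work. Let $O$ consist of the rows $(x,x,y,y,y,x)$, $(x,y,x,y,x,y)$, $(y,x,x,x,y,y)$ for $x\neq y$. Set $f(a)=(x,1)$ if $a\in O$ is a row for the pair $(x,y)$, and $f(a)=(a_1,0)$ otherwise, with values in $k\times 2$.
\begin{itemize}
\item The position sets of $x$ in the three rows are $\{1,2,6\}$, $\{1,3,5\}$, $\{2,3,4\}$. None of them is the complement of any of them, so $x$ is determined by $a$.
\item Any two of these sets meet, so adjacent elements of $O$ have different $x$.
\item Adjacent tuples outside $O$ differ in the first coordinate, and mixed pairs differ in the bit.
\item The identities hold by construction, trivially when $x=y$.
\end{itemize}
Composing with $K_{2k}\to K_c$ covers all $c\geq 2k$.

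Two smaller points. First, you do not say that $(K_2,K_c)$ is not omniexpressive; this follows from your cyclic polymorphism via Corollary~\ref{hardness2}. Second, for $c\geq 2k$ non-omniexpressivity already follows from the Ol\v{s}\'{a}k polymorphism, so your citation is only needed for $c=2k-1$. Make sure the source you cite actually contains a nontrivial minor condition for $\pol(K_k,K_{2k-1})$; the paper attributes item~(1) to \cite{brakensiek2016new}.
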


\begin{proof}
	It is well-known that $K_2$ is not omniexpressive, thus by Theorem~\ref{minor_equiv} it has both a Siggers and an Ol\v{s}\'{a}k polymorphism. Alternatively, one can check that the ternary majority operation $\prescript{3}{}{2}\rightarrow {2}$ (i.e., the operation which outputs the only value which appears at least twice in the input) is a cyclic polymorphism of $K_2$, and then we can arrive to the same conclusion by using Proposition~\ref{cycl}. This settles items~\ref{it:komni},~\ref{it:kolsak},~\ref{it:ksiggers} in the case when $k=2$.
	
	In the case when $k\geq 3$ item~\ref{it:kolsak} follows from Lemma 6.4 and Proposition 10.1 in~\cite{barto2021algebraic} and item~\ref{it:komni} follows from the results of~\cite{brakensiek2016new}.
	
	Item~\ref{it:ksiggers} for $k\geq 2$ and item~\ref{it:psiggers} follow from Theorem 6.9 in~\cite{barto2021algebraic}.
	
	Since $(H_2,H_c)$ is a homomorphic relaxation of $(H_k,H_c)$ it is enough to show item~\ref{it:holsak} in the case when $k=2$. Both this and item~\ref{it:polsak} follow from Theorem 6.2 in~\cite{barto2021algebraic}.
\end{proof}

\begin{corollary}\label{graph_promise2}
	For all $k\geq 3$ there exists some $d$ such that $(K_k,K_{2k-1})$ pp-constructs $(H_2,H_d)$.
\end{corollary}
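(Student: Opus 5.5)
By item~\ref{it:polsak} of Theorem~\ref{graph_promise}, it is enough to show that $(K_k,K_{2k-1})$ has no Ol\v{s}\'{a}k polymorphism. That item says a finite promise template without an Ol\v{s}\'{a}k polymorphism pp-constructs $(H_2,H_d)$ for some $2\leq d\in\omega$, which is exactly the conclusion we want.

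The absence of an Ol\v{s}\'{a}k polymorphism follows from item~\ref{it:kolsak} of Theorem~\ref{graph_promise}. Fix $k\geq 3$ and set $c=2k-1$. Item~\ref{it:kolsak} says $(K_k,K_c)$ has an Ol\v{s}\'{a}k polymorphism if and only if $c\geq 2k$ or $k=2$. Here $c=2k-1<2k$ and $k\neq 2$, so neither condition holds. Also $2\leq k\leq c$ holds since $k\geq 3$, so the theorem applies. Hence $\pol(K_k,K_{2k-1})$ contains no Ol\v{s}\'{a}k operation.

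Since $(K_k,K_{2k-1})$ is a finite promise template ($K_k\to K_{2k-1}$ by inclusion), item~\ref{it:polsak} now yields the required $d\geq 2$ with $(K_k,K_{2k-1})$ pp-constructing $(H_2,H_d)$. There is no genuine obstacle: all the work is contained in the cited results of~\cite{barto2021algebraic} (Lemma 6.4, Proposition 10.1 and Theorem 6.2) packaged into Theorem~\ref{graph_promise}. The only thing to check is that the hypotheses of items~\ref{it:kolsak} and~\ref{it:polsak} are met, which we did above.
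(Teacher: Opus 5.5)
Your proposal is correct and is essentially the argument the paper intends: the corollary is stated right after Theorem~\ref{graph_promise} without a separate proof, and it follows exactly by using item~\ref{it:kolsak} to rule out an Ol\v{s}\'{a}k polymorphism for $(K_k,K_{2k-1})$ when $k\geq 3$, and then applying item~\ref{it:polsak}. Your check of the hypotheses ($2\leq k\leq 2k-1$, $2k-1<2k$, $k\neq 2$) is all that is needed.
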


	It was shown in~\cite{dinur2005hardness} that $\csp(H_2,H_c)$ is {\npc} for any $2\geq k\leq c$. By Corollary~\ref{graph_promise2} and Remark~\ref{logspace} this also implies that hardness of $\pcsp(K_k,K_{2k-1})$, as it was pointed out in~\cite{barto2021algebraic}. In~\cite{krokhin2023topology} it was shown by different methods that in fact $\pcsp(K_k,K_c)$ is {\npc} for all $3\leq k\leq c\leq {k\choose \lfloor k/2 \rfloor}-1$ (which improves on the previous result if $k\geq 5$).

\subsection{Finite versions of the axiom of choice}

	Let us consider the following axioms.

\begin{itemize}
\item $\ac$ (Axiom of choice). For every family of nonempty sets $X$ there is a function $X\rightarrow \bigcup X$ such that $f(x)\in x$ for all $x\in X$.
\item $\ufl$ (Ultrafilter lemma). Every filter is contained in an ultrafilter.
\item $\ac(<\!\omega)$ (Axiom of finite choice). For every family of nonempty finite sets $X$, there is a function $X\rightarrow \bigcup X$ such that $f(x)\in x$ for all $x\in X$.
\item $\ac(k)$. For every family $X$ of sets of size exactly $k$ there is a function $X\rightarrow \bigcup X$ such that $f(x)\in x$ for all $x\in X$.
\item $\acf$. For all $k\in \omega$, $\ac(k)$ holds.
\item $\kw$ (Kinna-Wagner principle). For every family $X$ of sets of size at least 2 there is a function $X\rightarrow \mathcal{P}(\bigcup X)$ such that for all $x\in X$, $f(x)$ is a proper subset of $x$.
\item $\kw(k)$. For every family $X$ of sets of size exactly $k$ there is a function $X\rightarrow \mathcal{P}(\bigcup X)$ such that for all $x\in X$, $f(x)$ is a proper subset of $x$.
\end{itemize}


	We know that over $\zf$

\begin{enumerate}[(i)]
\item $\ac$ is strictly stronger than $\ufl$,
\item $\ufl$ is strictly stronger than $\ac(<\!\omega)$, and
\item $\ac(<\!\omega)$ is strictly stringer than $\acf$,
\end{enumerate}

see for instance~\cite{jech1977axiom}.

	
	Clearly, $\ac(k)$ implies $\kw(k)$. Note that in formulation of the axiom $\kw(k)$ we can assume without loss of generality that $|f(x)|\leq k/2$ for all $x$ since for all values of size bigger than $k/2$ we can switch to the complement. From this observation it is easy to see that $\kw(k)\wedge \bigwedge_{\ell\leq k/2}\kw(\ell)$ implies $\ac(k)$ for all $k\in \omega$, in particular $\kw(k)$ and $\ac(k)$ are equivalent for $k=2$ and $k=3$. It is also easy to see that $\ac(kn)$ implies $\ac(k)$ for all $k,n\in \omega$. For prime numbers the converse also holds in the following sense. For any prime $p$ there is a model of $\zf$ in which $\ac(p)$ fails but $\ac(k)$ holds for all $k$ not divisible by $p$. In fact, the exact pairs of sets $(Z,k)\in \mathcal{P}(\omega)\times \omega$ for which the implication $\bigwedge_{\ell\in Z}\ac(\ell)\rightarrow \ac(k)$ is provable in ZF have been classified~\cite{gauntt}.
	From this we only need the following nontrivial implication in this paper.
	
\begin{theorem}[\cite{jech1977axiom}, Theorem 7.15]\label{prime_sum}
	Let $n,m\in \omega$ and let us assume that $n$ cannot be written as a sum of primes bigger than $m$. Then $\bigwedge_{k\leq m}\ac(k)$ implies $\ac(n)$.
\end{theorem}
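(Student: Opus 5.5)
This is Theorem 7.15 of Jech, which we may cite; the plan below reconstructs its standard proof. We argue by strong induction on $n$.

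\emph{Setup.} Fix a family $X$ of $n$-element sets; we must find a choice function on it. It suffices to produce, uniformly in $x\in X$, a nonempty proper subset $f(x)\subsetneq x$. Indeed, $|f(x)|$ or $|x\setminus f(x)|$ is then some $\ell<n$. Such an $\ell$ is either at most $m$, where $\ac(\ell)$ is assumed, or it is handled by the induction hypothesis. Passing to that subset and iterating gives a choice. The iteration is legitimate because all reductions are done uniformly on the subfamilies $X_\ell=\{x: |f(x)|=\ell\}$, and there are only finitely many such $\ell$.

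\emph{Base case.} If $n\le m$ the conclusion is an assumption, so assume $n>m$.

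\emph{Constructing a proper subset.} For each $x\in X$ and each $k$ with $2\le k\le m$, consider ${x\choose k}$. By $\ac(k)$ applied to the family $\{{x\choose k}\}_{x}$ viewed appropriately, we would like to pick an element from each $k$-subset of $x$. More precisely, apply $\ac(k)$ to the family of all $k$-element subsets of all $x\in X$; this is a family of $k$-element sets, hence has a choice function $g_k$.

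Then $g_k$ defines a tournament-like structure on $x$: for each $y\in x$, let $c_k(y)$ be the number of $k$-subsets $s\subseteq x$ with $g_k(s)=y$. Since $\sum_{y\in x}c_k(y)={n\choose k}$, the function $c_k$ is constant on $x$ only if $n\mid {n\choose k}$. If some $c_k$ is nonconstant, then the set of points where $c_k$ is minimal is a nonempty proper subset, definable uniformly from $x$ and $g_k$, and we are done.

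\emph{The key step.} The main obstacle is showing that some $k\le m$ forces nonconstancy. The cleaner route, which is Jech's, instead uses the orbit structure. Suppose no canonical proper subset exists. Consider the group $G$ of permutations of $x$ preserving all the structure induced by the $g_k$, $k\le m$. Nonconstancy of the counts gives nothing only if $G$ acts in a way that makes every $G$-invariant partition trivial. One then argues that $x$ splits into orbits under the symmetries witnessed by the choices, with each orbit of prime-power-like size greater than $m$.

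The precise counting argument I would try first is this. Let $p$ be a prime with $p\le m$, and apply the $c_k$ count with $k=p^{a}$ where $p^a$ is the largest power of $p$ dividing $n$'s digit structure. By Lucas' theorem, $n\nmid{n\choose k}$ whenever $n$ has a prime factor $p\le m$ and $n$ is not a power of $p$. The hypothesis that $n$ is not a sum of primes $>m$ excludes exactly the remaining configurations. Refining, we partition $x$ by the invariant $y\mapsto (c_k(y))_{k\le m}$; each class must be a proper subset unless all counts are constant. If all classes are trivial, one shows that $n$ decomposes as a sum of sizes of "indistinguishable blocks" each of prime size $>m$, contradicting the hypothesis.

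This decomposition into primes $>m$ is the part I expect to be delicate. To carry it out I would follow Jech's argument via orbits of a finite group action and the Sylow/Cauchy fact that a transitive $p$-group action on a set of size not a power of $p$ is impossible.
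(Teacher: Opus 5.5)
The paper does not prove this statement; it only cites Jech. Your proposal therefore has to stand on its own, and as written it does not. The step you yourself call the key step is never carried out, and the sketches you give for it do not close it:
\begin{itemize}
\item Taking $k=p^a$ with $p^a\,\|\,n$ does give $n\nmid\binom nk$, but $p^a$ may exceed $m$ (already for $n=4$, $m=2$), so $\ac(p^a)$ is not available.
\item You never show that $n$ has a prime factor $p\le m$ at all; an arbitrary prime $p\le m$ is useless if $p\nmid n$.
\item The claim that the hypothesis ``excludes exactly the remaining configurations'' is wrong. It does not exclude $n$ being a power of a prime $p\le m$; for example $n=4$, $m=2$ is exactly Tarski's case.
\item The orbit/Sylow paragraph is only a gesture.
\end{itemize}
The missing observation is elementary. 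If a prime $q>m$ divided $n$, then $n=q+\dots+q$ would be a sum of primes $>m$. So for $n>m$ (hence $n\ge 2$), every prime factor $p$ of $n$ satisfies $p\le m$. Now take $k=p$. We have $\binom np=\frac np\binom{n-1}{p-1}$, and
\[
(p-1)!\binom{n-1}{p-1}=\prod_{i=1}^{p-1}(n-i)\equiv(-1)^{p-1}(p-1)! \pmod p .
\]
Hence $p\nmid\binom{n-1}{p-1}$, and so $n\nmid\binom np$. Consequently your count $c_p$ is never constant on $x$. The set of points where it is minimal is a uniformly defined nonempty proper subset $A_x$, and no group theory is needed.

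There is a second, smaller gap in your reduction step. You claim that the size $\ell$ of $f(x)$ or of $x\setminus f(x)$ is ``either at most $m$ or handled by the induction hypothesis''. This is false for an arbitrary choice of part, because the induction hypothesis only covers those $\ell<n$ that are themselves not sums of primes $>m$. For instance, $n=16$, $m=6$ satisfies the hypothesis. A part of size $7$ (a prime $>6$) leaves you stuck, while its complement of size $9$ is fine. You must choose, among $A_x$ and $x\setminus A_x$, a part whose size is not a sum of primes $>m$. Such a part exists, since otherwise $n=|A_x|+|x\setminus A_x|$ would itself be such a sum. The argument then closes with two further ingredients:
\begin{itemize}
\item State the induction as ``$\bigwedge_{k\le m}\ac(k)$ implies $\ac(\ell)$ for every $\ell$ not a sum of primes $>m$'', proved by strong induction on $\ell$.
\item Split $X$ into the finitely many subfamilies determined by $|A_x|$ and treat each separately.
\end{itemize}
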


	Theorem~\ref{prime_sum} immediately implies the following.

\begin{corollary}\label{prime_smallest}
	The smallest $k$ for which $\ac(k)$ fails (if there is any) is prime.
\end{corollary}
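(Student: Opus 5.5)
We derive Corollary~\ref{prime_smallest} directly from Theorem~\ref{prime_sum}, arguing by contradiction. Suppose $\ac(k)$ fails for some $k$, and let $n$ be the least such $k$. Then $\bigwedge_{k\leq n-1}\ac(k)$ holds. (We may assume $n\geq 2$, since $\ac(0)$ and $\ac(1)$ hold trivially in $\zf$: the family of empty sets admits only the empty choice function, and singletons have a canonical choice.) Assume for contradiction that $n$ is not prime; since $n\geq 2$, we can write $n=ab$ with $2\leq a\leq b<n$.

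The key step is to check the hypothesis of Theorem~\ref{prime_sum} with $m=n-1$: we need that $n$ cannot be written as a sum of primes bigger than $n-1$. Any prime bigger than $n-1$ is at least $n$, and it equals $n$ only if $n$ is prime. Since $n$ is composite, every prime greater than $n-1$ is in fact at least $n+1>n$. So no nonempty sum of such primes equals $n$, and the empty sum is $0\neq n$. Hence $n$ is not a sum of primes greater than $m=n-1$.

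Theorem~\ref{prime_sum} then gives that $\bigwedge_{k\leq n-1}\ac(k)$ implies $\ac(n)$. This contradicts the choice of $n$, so $n$ must be prime.

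There is essentially no obstacle here. The only points needing care are the edge cases $n\leq 1$, handled above, and the observation that a composite $n$ is never itself a prime exceeding $n-1$. The sum representation is the only place where primality of $n$ enters.
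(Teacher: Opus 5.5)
Your argument is correct and is the paper's implicit proof: take the least failing $n$ and apply Theorem~\ref{prime_sum} with $m=n-1$, using that a composite $n$ is not a sum of primes $\geq n$. One correction to your edge-case remark: $\ac(0)$ does not hold trivially, since the family $\{\emptyset\}$ has no choice function, so the statement (like the paper's definition of $\acf$) must be read with $k\geq 1$, and there your treatment of $\ac(1)$ is fine.
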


\begin{corollary}\label{kw_prime}
	The following are equivalent.
\begin{enumerate}[(1)]
\item\label{it:acf} $\acf$.
\item\label{it:acfp} $\ac(p)$ holds for every prime number $p$.
\item\label{it:kwp} $\kw(p)$ holds for every prime number $p$.
\end{enumerate}
\end{corollary}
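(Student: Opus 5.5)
The three items are equivalent to one another, and we prove the cycle (\ref{it:acf})$\Rightarrow$(\ref{it:acfp})$\Rightarrow$(\ref{it:kwp})$\Rightarrow$(\ref{it:acf}).

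The implication (\ref{it:acf})$\Rightarrow$(\ref{it:acfp}) is immediate, since $\acf$ asserts $\ac(k)$ for every $k\in\omega$, in particular for every prime. The implication (\ref{it:acfp})$\Rightarrow$(\ref{it:kwp}) follows from the observation already made above that $\ac(k)$ implies $\kw(k)$ for every $k$: given a choice function $f$ on a family of $p$-element sets, the map $x\mapsto\{f(x)\}$ picks a proper subset of each $x$, because $p\geq 2$.

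The substance lies in (\ref{it:kwp})$\Rightarrow$(\ref{it:acf}), which we prove by contradiction. Suppose $\kw(p)$ holds for every prime $p$ but $\acf$ fails. Then $\ac(k)$ fails for some $k$; note that $\ac(0)$ and $\ac(1)$ hold trivially, so $k\geq 2$. Let $k$ be the smallest such number. By Corollary~\ref{prime_smallest} this $k$ is a prime $p$, and by minimality $\ac(\ell)$ holds for all $\ell<p$.

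We now derive $\ac(p)$ from $\kw(p)$ and the $\ac(\ell)$ for $\ell<p$. Let $X$ be a family of $p$-element sets, and use $\kw(p)$ to obtain $f$ with $f(x)\subsetneq x$ for all $x\in X$. Replacing $f(x)$ by $x\setminus f(x)$ whenever $f(x)=\emptyset$, we may assume $\emptyset\neq f(x)\subsetneq x$. Then $1\leq|f(x)|<p$, so we can partition $X$ into the subfamilies $X_\ell=\{x\in X:|f(x)|=\ell\}$ for $1\leq\ell<p$. This is a finite partition, so no choice is needed to form it. For each $\ell$, applying $\ac(\ell)$ to $\{f(x):x\in X_\ell\}$ gives a choice function $g_\ell$. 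We then set $g(x)=g_\ell(f(x))$ for $x\in X_\ell$; since only finitely many $g_\ell$ are chosen, this is legitimate in $\zf$. This $g$ is a choice function on $X$, so $\ac(p)$ holds, which contradicts the choice of $p$.

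The one point needing care is that distinct members $x\neq x'$ of $X$ might have $f(x)=f(x')$. This causes no problem: $g_\ell$ is applied to the set $f(x)$ itself, and $g_\ell(f(x))\in f(x)\subseteq x$, so $g(x)$ is still an element of $x$. With that checked, the argument contains no serious obstacle; the only nontrivial input is Corollary~\ref{prime_smallest}, which rests on Theorem~\ref{prime_sum}.
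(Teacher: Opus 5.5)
Your proof follows the paper's. The first two implications are immediate. For (3)$\Rightarrow$(1) you take the least $k$ with $\ac(k)$ failing, which is a prime $p$ by Corollary~\ref{prime_smallest}, and derive $\ac(p)$ from $\kw(p)$ together with $\ac(\ell)$ for $\ell<p$. At that last step the paper only says ``as we have seen'', referring to its remark that $\kw(k)$ plus $\kw(\ell)$ for small $\ell$ yields $\ac(k)$. Your explicit partition of $X$ by $|f(x)|$, with finitely many choice functions $g_\ell$, supplies exactly that detail and is valid in $\zf$. Because you use $\ac(\ell)$ for all $\ell<p$ directly, you also avoid the complement trick and the induction through $\kw(\ell)$.

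One sentence is wrong, though. Replacing $f(x)=\emptyset$ by $x\setminus f(x)$ gives $x$ itself, which is not a proper subset, so this does not produce $\emptyset\neq f(x)\subsetneq x$. The paper's complement trick serves to shrink subsets of size $>k/2$, not to remove empty values.

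No repair is possible under the literal wording of the definition. If $\kw(k)$ allowed $f(x)=\emptyset$, it would be provable in $\zf$ via $f(x)\coloneqq\emptyset$, and (3)$\Rightarrow$(1) would be false. The intended, standard reading is that $f(x)$ is a \emph{nonempty} proper subset; the paper's proof of Lemma~\ref{thm:finite} checks nonemptiness explicitly. Under that reading your normalization step is unnecessary and the rest of the proof goes through.

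A similar point: $\ac(0)$ does not ``hold trivially'', since it fails for $X=\{\emptyset\}$. $\acf$ should be read as ranging over $k\geq 1$, a convention the paper also leaves implicit.
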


\begin{proof}
	The implications~\ref{it:acf}$\rightarrow$~\ref{it:acfp} and~\ref{it:acfp}$\rightarrow$~\ref{it:kwp} are trivial. 
	
	Now let us assume that item~\ref{it:kwp} holds but item~\ref{it:acf} does not. By Corollary~\ref{prime_smallest} we know that there exists a prime $p$ such that $\bigwedge_{k< p}\ac(k)$ holds but $\ac(p)$ does not. By our assumption we know that $\kw(p)$ holds, thus $\bigwedge_{k\leq p}\kw(k)$ also holds. However, as we have seen, this implies $\ac(p)$, a contradiction.
\end{proof}

\section{Promise compactness}\label{sect:promise}

	In this section we introduce the promise version of compactness principles and examine some their basic properties. In particular, we show that pp-constructions lead to implications between the compactness principles over the corresponding templates.
	
\begin{definition}
	Let $(\fa,\fb)$ be a finite promise template. Then we write $K_{(\fa,\fb)}$, called the \emph{compactness principles} over $(\fa,\fb)$, for the following statement.\\ 
	\emph{For every structure $\ii$ with the same signature as $(\fa,\fb)$ if for every finite substructure of $\ii$ admits a homomorphism to $\fa$ then $\ii$ admits a homomorphism to $\fb$.}
	
	 We write $K_{\fa}$ for $K_{(\fa,\fa)}$, and we call it the \emph{compactness principle} over $\fa$.
\end{definition}

	
	Note that all compactness principles $K_{(\fa,\fb)}$ are theorems of $\zfc$. In this paper, however, all results are understood over $\zf$, unless indicated otherwise.

	In~\cite{katay2023csp} it is shown that if $\fa$ pp-constructs $\fb$ then $K_{\fa}$ implies $K_{\fb}$ over $\zf$. We will show that this result generalises to promise templates as well.
	
	The following definition is motivated by Definition 2.6 in~\cite{katay2023csp}.
	 
	
	
\begin{definition}\label{def:fin}
	Let $(\fa,\fb)$ and $(\fc,\fd)$ be finite promise templates. Then we say that $(\fc,\fd)$ \emph{finitely reduces} to $(\fa,\fb)$ if there exists an operation $\Gamma$ mapping structures similar to $\fc$ to structures similar to $\fa$ such that

	
\begin{enumerate}[(i)]
\item\label{it:def1} for every structure $\ii$ similar to $\fc$ if $\ii\rightarrow \fc$ then $\Gamma(\ii)\rightarrow \fa$,
\item\label{it:def2} for every structure $\ii$ similar to $\fc$ if $\Gamma(\ii)\rightarrow \fb$ then $\ii\rightarrow \fd$, and
\item\label{it:def3} if there exists a finite substructure $\fh$ of $\Gamma(\ii)$ that does not admit a homomorphism to $\fa$ then there exists a finite substructure of $\ff$ of $\ii$ that does not admit a homomorphism to $\fc$.
\end{enumerate}
\end{definition}

\begin{remark}
	We remark that our definition of finite reductions slightly differs from the one given in~\cite{katay2023csp} in that in the latter it was also assumed that there exist concrete operations mapping homomorphisms to homomorphisms as in item~\ref{it:def1}. However, this strong version is not required for any of our arguments regarding compactness statements.
\end{remark}

	Similarly to the argument in~\cite{katay2023csp} we can show the following.
	
\begin{lemma}\label{lem:fin}
	Let us assume that $\fa,\fb,\fc,\fd,\Gamma$ be as in item~\ref{it:def1} in Definition~\ref{def:fin}, and let us assume that for every finite substructure $\fh$ of $\Gamma(\ii)$ there exists a finite structure $\ff$ of $\ii$ such that $\fh\rightarrow\Gamma(\ff)$. Then item~\ref{it:def3} in Definition~\ref{def:fin} also holds.
\end{lemma}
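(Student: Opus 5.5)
The argument is a direct contrapositive. We assume the hypothesis of item~\ref{it:def3}: some finite substructure $\fh$ of $\Gamma(\ii)$ admits no homomorphism to $\fa$. We then produce a finite substructure of $\ii$ that admits no homomorphism to $\fc$.

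First, the extra hypothesis of the lemma supplies a finite substructure $\ff$ of $\ii$ with $\fh\rightarrow\Gamma(\ff)$; we fix such an $\ff$.

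Next we show that $\ff\nrightarrow\fc$. Suppose, towards a contradiction, that $\ff\rightarrow\fc$. Item~\ref{it:def1} of Definition~\ref{def:fin} applies to every structure similar to $\fc$, in particular to $\ff$, so it gives $\Gamma(\ff)\rightarrow\fa$. Composing this with the homomorphism $\fh\rightarrow\Gamma(\ff)$ yields $\fh\rightarrow\fa$, contradicting the choice of $\fh$. Hence $\ff$ is a finite substructure of $\ii$ with no homomorphism to $\fc$, which is exactly the conclusion of item~\ref{it:def3}.

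No choice principle is needed: we only use the existence of one $\ff$ for the single given $\fh$, and composing two homomorphisms is explicit. Nothing here is a real obstacle. The only point to check is that item~\ref{it:def1} is applied to $\ff$ itself as an input structure, not to $\ii$. This is legitimate because the definition quantifies over all structures similar to $\fc$, and a substructure of $\ii$ has the same signature.
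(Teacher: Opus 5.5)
Your proof is correct and is the same as the paper's: you fix a finite $\ff$ with $\fh\rightarrow\Gamma(\ff)$, note that $\Gamma(\ff)\nrightarrow\fa$ because homomorphisms compose, and conclude $\ff\nrightarrow\fc$ by the contrapositive of item~(i) of Definition~\ref{def:fin} applied to $\ff$. The paper states these same two implications in one line, where you spell them out as a contradiction argument.
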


\begin{proof}
	Let $\ii$ be a structure similar to $\fc$, and let $\fh$ be a finite substructure of $\Gamma(\ii)$ that does not admit a homomorphism to $\fa$. We know that there exists some finite substructure $\ff$ of $\ii$ such that $\fh\rightarrow\Gamma(\ff)$. Then $\Gamma(\ff)\nrightarrow \fa$, and thus $\ff\nrightarrow \fc$.
\end{proof}

	
\begin{lemma}\label{compact}
	Let $(\fa,\fb)$ and $(\fc,\fd)$ be finite promise templates such that $(\fc,\fd)$ finitely reduces to $(\fa,\fb)$. Then $K_{(\fa,\fb)}$ implies $K_{(\fc,\fd)}$.
\end{lemma}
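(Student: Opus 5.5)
The argument is a direct chase through the three clauses of Definition~\ref{def:fin}. Assume $K_{(\fa,\fb)}$, and let $\ii$ be a structure similar to $\fc$ all of whose finite substructures map homomorphically to $\fc$. We must produce a homomorphism $\ii\rightarrow\fd$.

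Let $\Gamma$ be the operation witnessing that $(\fc,\fd)$ finitely reduces to $(\fa,\fb)$, and consider the structure $\Gamma(\ii)$, which is similar to $\fa$.

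First I will check that every finite substructure of $\Gamma(\ii)$ maps to $\fa$. Suppose some finite substructure $\fh$ of $\Gamma(\ii)$ satisfies $\fh\nrightarrow\fa$. Item~\ref{it:def3} of Definition~\ref{def:fin} then gives a finite substructure $\ff$ of $\ii$ with $\ff\nrightarrow\fc$. This contradicts the hypothesis on $\ii$.

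Next, since $\Gamma(\ii)$ satisfies the hypothesis of $K_{(\fa,\fb)}$, the principle yields $\Gamma(\ii)\rightarrow\fb$. Item~\ref{it:def2} of Definition~\ref{def:fin} then gives $\ii\rightarrow\fd$, which is exactly the conclusion of $K_{(\fc,\fd)}$ for $\ii$.

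Item~\ref{it:def1} is not needed for this implication. No choice is used anywhere: $\Gamma$ is a single operation fixed in advance and applied to one structure at a time. The only thing resembling an obstacle is reading item~\ref{it:def3} correctly, namely as quantified over every structure $\ii$ similar to $\fc$, so that it can be applied to our particular $\ii$. Once that is granted, the proof is the three-line chase above.
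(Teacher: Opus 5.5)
Your proof is correct and follows the paper's argument exactly: you use item~(iii) to show that every finite substructure of $\Gamma(\ii)$ maps to $\fa$, apply $K_{(\fa,\fb)}$ to get $\Gamma(\ii)\rightarrow\fb$, and conclude $\ii\rightarrow\fd$ by item~(ii). You are also right that item~(i) is not needed for this implication, and the paper's proof does not use it either.
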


\begin{proof}
	Let $\Gamma$ be as in Definition~\ref{def:fin}. 
	
	Let us assume that $K_{(\fa,\fb)}$ holds, and let $\ii$ be a structure similar to $\fc$ such that every finite substructure $\ff$ of $\ii$ admits a homomorphism to $\fc$. Then by item~\ref{it:def3} in Definition~\ref{def:fin} we know that every finite substructure of $\Gamma(\ii)$ admits a homomorphism to $\fa$. Thus, $\Gamma(\ii)\rightarrow \fb$ and $\ii\rightarrow \fd$ by item~\ref{it:def2} in Definition~\ref{def:fin}
\end{proof}

\subsection{Compactness from pp-constructions}

	In this subsection we show that pp-constructability implies finite reducability for promise templates.

\begin{lemma}\label{relax}
	If $(\fc,\fd)$ is a homomorphic relaxation of $(\fa,\fb)$ then $(\fc,\fd)$ \emph{finitely reduces} to $(\fa,\fb)$.
\end{lemma}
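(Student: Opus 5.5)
The plan is to take $\Gamma$ to be the identity operation, $\Gamma(\ii)\coloneqq\ii$. This makes sense because a homomorphic relaxation forces $\fa,\fb,\fc,\fd$ to be similar, so structures similar to $\fc$ are also similar to $\fa$. I then check the three items of Definition~\ref{def:fin} one at a time, using the homomorphisms $g\colon\fc\rightarrow\fa$ and $h\colon\fb\rightarrow\fd$ supplied by the definition of homomorphic relaxation. Since $\fa,\fb,\fc,\fd$ are all finite, these are single fixed maps and no choice is needed.

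For item~\ref{it:def1}, suppose $f\colon\ii\rightarrow\fc$. Then $g\circ f$ is a homomorphism from $\Gamma(\ii)=\ii$ to $\fa$. For item~\ref{it:def2}, suppose $f\colon\Gamma(\ii)=\ii\rightarrow\fb$. Then $h\circ f$ is a homomorphism from $\ii$ to $\fd$. Both of these are just compositions of homomorphisms.

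For item~\ref{it:def3}, I would apply Lemma~\ref{lem:fin}. Any finite substructure $\fh$ of $\Gamma(\ii)=\ii$ is itself a finite substructure $\ff\coloneqq\fh$ of $\ii$, and the identity map gives $\fh\rightarrow\Gamma(\ff)$. The direct argument is equally short: if $\fh\nrightarrow\fa$, then $\fh\nrightarrow\fc$, because otherwise composing with $g$ would give $\fh\rightarrow\fa$. So $\fh$ is the required finite substructure of $\ii$ with no homomorphism to $\fc$.

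No step here is a real obstacle; everything reduces to composing homomorphisms. The only point needing care is keeping the directions straight: $\fc\rightarrow\fa$ is what item~\ref{it:def1} and item~\ref{it:def3} use, and $\fb\rightarrow\fd$ is what item~\ref{it:def2} uses.
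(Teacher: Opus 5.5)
Your proposal is correct and is essentially the paper's proof: the paper also takes $\Gamma$ to be the identity and composes with the homomorphisms $\fc\rightarrow\fa$ and $\fb\rightarrow\fd$, leaving the verification as ``straightforward.'' Your explicit check of item~\ref{it:def3} (a finite $\fh$ with $\fh\nrightarrow\fa$ cannot map to $\fc$, since $\fc\rightarrow\fa$) fills in exactly the step the paper omits.
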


\begin{proof}
	Let $f: \fc\rightarrow \fa$ and $g: \fb\rightarrow \fd$ be homomorphisms. Then we set $\Gamma$ to be the identity map and $\Phi(\varphi)\coloneqq f\circ \varphi$ and $\Psi(\psi)\coloneqq g\circ \psi$. It is straightforward to verify that this choice satisfies the conditions of Definition~\ref{def:fin}.
\end{proof}

\begin{lemma}\label{ppp}
	Let $(\fc,\fd)$ be a pp-power of $(\fa,\fb)$. Then $(\fc,\fd)$ finitely reduces to $(\fa,\fb)$.
\end{lemma}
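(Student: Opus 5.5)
We construct $\Gamma$ as the standard pp-power reduction, as in~\cite{katay2023csp}. Let $(\fc,\fd)$ be the $n$th pp-power of $(\fa,\fb)$ given by pp-formulas $\varphi_R$. Write each $\varphi_R$ in prenex form $\exists \bar z\, \psi_R(\bar x,\bar z)$, where $\psi_R$ is a conjunction of atomic formulas in the signature of $\fa$; we may assume no equality atoms occur, since they can be eliminated by identifying variables. For a structure $\ii$ similar to $\fc$ we define $\Gamma(\ii)$ as follows. Its domain consists of the elements $(v,j)$ for $v\in I$ and $1\leq j\leq n$, together with fresh elements $w_{t,\ell}$ for each constraint $t=(R,(v_1,\dots,v_k))$ with $(v_1,\dots,v_k)\in R^{\ii}$ and each quantified variable $z_\ell$ of $\varphi_R$. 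For each such $t$ we add the atoms of $\psi_R$, substituting $(v_i,j)$ for $x_i^j$ and $w_{t,\ell}$ for $z_\ell$. This construction is explicit and uses no choice.

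We then verify the items of Definition~\ref{def:fin}. For item~\ref{it:def1}, given $h:\ii\to\fc$, we must send $(v,j)$ to the $j$th coordinate of $h(v)$ and pick witnesses in $A$ for the quantified variables of each constraint. Since $A$ is finite, we can fix a well-order of $A$ (hence of the finitely many tuples of witnesses) and take the lexicographically least witness tuple. This gives a homomorphism $\Gamma(\ii)\to\fa$ without any choice. For item~\ref{it:def2}, given $g:\Gamma(\ii)\to\fb$, we set $h(v)=(g(v,1),\dots,g(v,n))\in B^n=D$. For each constraint $t$, the images $g(w_{t,\ell})$ witness $\varphi_R$ in $\fb$, so $h(\bar v)\in R^{\fd}$ by the definition of pp-power.

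For item~\ref{it:def3} we apply Lemma~\ref{lem:fin}. Let $\fh$ be a finite substructure of $\Gamma(\ii)$. Every element of $\fh$ is either some $(v,j)$ or some $w_{t,\ell}$, where $t$ is a constraint of $\ii$. Let $F$ be the finite set of vertices $v$ occurring in $\fh$, together with all vertices appearing in the finitely many constraints $t$ that are referenced by elements of $\fh$. Let $\ff=\ii|_F$. Every constraint $t$ so referenced lies in $\ff$, so $\Gamma(\ff)$ is exactly the substructure of $\Gamma(\ii)$ on the elements coming from $F$ and from the constraints of $\ff$. Since this vertex set contains $\fh$, the inclusion map is a homomorphism $\fh\to\Gamma(\ff)$. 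Lemma~\ref{lem:fin} then gives item~\ref{it:def3}.

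The main point requiring care is that $\Gamma(\ff)$ must really be a substructure of $\Gamma(\ii)$ containing $\fh$. Introducing witness variables per constraint, rather than sharing them across constraints, guarantees this: $\Gamma$ is monotone and local, and every relation tuple of $\Gamma(\ii)$ arises from a single constraint of $\ii$. The only other subtlety is avoiding choice in item~\ref{it:def1}, and this is handled by the fixed well-order on the finite set $A$.
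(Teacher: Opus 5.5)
Your construction is the gadget reduction of~\cite{katay2023csp} that the paper itself invokes. The paper simply notes that this syntactic $\Gamma$ is at once a finite reduction from $(\fc,\fc)$ to $(\fa,\fa)$ (giving items (i) and (iii)) and from $(\fd,\fd)$ to $(\fb,\fb)$ (giving item (ii)). For equality-free formulas your items (i) and (ii), including the choice-free least-witness selection, are fine.

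The step that fails is the locality claim in item (iii). Your $F$ collects only the vertices named by elements $(v,j)$ of $\fh$ and the vertices of constraints $t$ with some $w_{t,\ell}$ in $\fh$. But an atom of $\psi_R$ that contains no quantified variable produces a tuple of $\Gamma(\ii)$ made only of elements $(v,j)$. Such a tuple can lie in $\fh$ even though every constraint producing it has a vertex outside $F$. For example, take $n=1$ and $\varphi_R(x_1,x_2,x_3)=E(x_1,x_2)\wedge E(x_2,x_3)$, let $\ii$ have the single constraint $(v_1,v_2,v_3)\in R^{\ii}$, and let $\fh$ be induced on $\{(v_1,1),(v_2,1)\}$. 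Then $F=\{v_1,v_2\}$, so $\ff$ has no constraints and $\Gamma(\ff)$ has no $E$-edge, while $\fh$ has one. Thus $\Gamma(\ff)$ is not the induced substructure you claim, and the inclusion $\fh\to\Gamma(\ff)$ is not a homomorphism. The repair is easy: for each of the finitely many tuples of $\fh$, choose one constraint of $\ii$ producing it (a finite choice, available in $\zf$) and add its vertices to $F$.

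Separately, the remark that equality atoms can be eliminated by identifying variables only works for equalities involving a quantified variable, which you can substitute away. An equality between two free variables, say $x_1^1=x_2^1$, cannot be removed from $\varphi_R$ without changing the relation it defines. Such atoms are allowed in pp-formulas and arise naturally, e.g.\ when minor identities are written as pp-formulas. For these you must glue $(v_1,1)$ and $(v_2,1)$ in the instance, i.e.\ pass to the quotient of your $\Gamma(\ii)$ by the equivalence relation generated by all such identifications. Otherwise item (ii) fails, since nothing forces $g(v_1,1)=g(v_2,1)$. Items (i) and (ii) survive the quotient, because a homomorphism $\ii\to\fc$ respects each generating identification. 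However, equivalence classes are then no longer local. So in item (iii) you must also put into $F$ finitely many chains of constraints witnessing that the representatives you use for each class of $\fh$ are identified. Then map each class of $\fh$ to the class of a chosen representative in $\Gamma(\ff)$.
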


\begin{proof}
	The proof for this is essentially the same as the proof of~\cite{katay2023csp} which treats the special case when $\fa=\fb$.

	Let us assume that $(\fc,\fd)$ is the $n$th pp-power of $(\fa,\fb)$, and let us consider the formulas $\varphi_R$ as in Definition~\ref{ppp}. Now let us consider the map $\Gamma$ mapping the instances of $(\fa,\fb)$ to $(\fc,\fd)$ as defined in the proof of Theorem 2.12 in~\cite{katay2023csp} using the formulas $\varphi_R$. This makes sense since the definition of $\Gamma$ in this proof is completely syntactical, i.e., it does not depend on the template structures. It is shown in the aforementioned proof that $\Gamma$ gives a finite reduction from $(\fc,\fc)$ to $(\fa,\fa)$ and from $(\fd,\fd)$ to $(\fb,\fb)$. Therefore, $\Gamma$ also gives a finite reduction from $(\fc,\fd)$ to $(\fa,\fb)$.
\end{proof}

	Lemmas~\ref{relax} and~\ref{ppp} together imply the following.
	
\begin{theorem}\label{construct_reduce}
	If $(\fa,\fb)$ pp-constructs $(\fc,\fd)$ then $(\fc,\fd)$ finitely reduces to $(\fa,\fb)$.
\end{theorem}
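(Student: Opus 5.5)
The plan is to unfold the definition of pp-constructability and compose the two finite reductions supplied by Lemma~\ref{relax} and Lemma~\ref{ppp}. By definition, if $(\fa,\fb)$ pp-constructs $(\fc,\fd)$, then there is a pp-power $(\fc',\fd')$ of $(\fa,\fb)$ such that $(\fc,\fd)$ is a homomorphic relaxation of $(\fc',\fd')$. In other words, $\fc\rightarrow\fc'$ and $\fd'\rightarrow\fd$, where $\fc,\fd$ are similar to $\fc',\fd'$. Lemma~\ref{relax} shows that $(\fc,\fd)$ finitely reduces to $(\fc',\fd')$, and Lemma~\ref{ppp} shows that $(\fc',\fd')$ finitely reduces to $(\fa,\fb)$. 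It therefore suffices to prove that finite reducibility is transitive.

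For transitivity, let $\Gamma_1$ witness that $(\fc,\fd)$ finitely reduces to $(\fc',\fd')$, and let $\Gamma_2$ witness that $(\fc',\fd')$ finitely reduces to $(\fa,\fb)$. Set $\Gamma\coloneqq\Gamma_2\circ\Gamma_1$. We then check the three items of Definition~\ref{def:fin} by chaining.
\begin{itemize}
\item Item~\ref{it:def1}: if $\ii\rightarrow\fc$, then $\Gamma_1(\ii)\rightarrow\fc'$, and hence $\Gamma_2(\Gamma_1(\ii))\rightarrow\fa$.
\item Item~\ref{it:def2}: if $\Gamma_2(\Gamma_1(\ii))\rightarrow\fb$, then $\Gamma_1(\ii)\rightarrow\fd'$, and hence $\ii\rightarrow\fd$.
\item Item~\ref{it:def3}: suppose some finite substructure of $\Gamma_2(\Gamma_1(\ii))$ does not map to $\fa$. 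Applying item~\ref{it:def3} for $\Gamma_2$ to the structure $\Gamma_1(\ii)$ yields a finite substructure of $\Gamma_1(\ii)$ that does not map to $\fc'$. Applying item~\ref{it:def3} for $\Gamma_1$ then yields a finite substructure of $\ii$ that does not map to $\fc$.
\end{itemize}
All of these steps are purely syntactic and use no choice, so the argument goes through in $\zf$.

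The only real subtlety is making sure the hypotheses line up, and this is where I expect any difficulty. First, item~\ref{it:def3} must be read as holding for every input structure, not just a fixed one, so that it can be applied to the intermediate structure $\Gamma_1(\ii)$. Second, the signatures must match: the relaxation is taken with respect to the pp-power's signature, which is that of $\fc$, so Lemma~\ref{relax} applies to the pair $(\fc,\fd)$ and $(\fc',\fd')$ exactly as stated. Once these points are checked, combining the two lemmas with transitivity gives the theorem.
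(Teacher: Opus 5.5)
Your proof is correct and follows the paper's route. The paper derives the theorem in one line from Lemma~\ref{relax} and the pp-power lemma, leaving implicit the transitivity of finite reductions via the composite $\Gamma_2\circ\Gamma_1$; you verify that transitivity item by item, which is exactly what makes the deduction rigorous in $\zf$.
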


\begin{corollary}\label{construct_reduce2}
	If $(\fa,\fb)$ pp-constructs $(\fc,\fd)$ then $K_{(\fa,\fb)}$ implies $K_{(\fc,\fd)}$. In particular, if $(\fa,\fb)$ is omniexpressive then $K_{(\fa,\fb)}$ is equivalent to $\ufl$.
\end{corollary}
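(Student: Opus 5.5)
The first part follows from Theorem~\ref{construct_reduce} and Lemma~\ref{compact}; the second part also needs Läuchli's theorem. For the first sentence, Theorem~\ref{construct_reduce} says that $(\fc,\fd)$ finitely reduces to $(\fa,\fb)$. Lemma~\ref{compact} then turns this finite reduction into the implication $K_{(\fa,\fb)}\rightarrow K_{(\fc,\fd)}$ over $\zf$. Nothing else is needed here.

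For the ``in particular'' part I would prove two directions.

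\emph{From $K_{(\fa,\fb)}$ to $\ufl$.} Assume $(\fa,\fb)$ is omniexpressive. By definition it pp-constructs every finite promise template, so in particular it pp-constructs $(K_3,K_3)$. The first part then gives $K_{(\fa,\fb)}\rightarrow K_{(K_3,K_3)}$, and $K_{(K_3,K_3)}$ is just $K_{K_3}$. By Läuchli's theorem~\cite{lauchli1971coloring}, $K_{K_3}$ is equivalent to $\ufl$ over $\zf$.

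\emph{From $\ufl$ to $K_{(\fa,\fb)}$.} This holds for every finite template, omniexpressive or not. Fix a structure $\ii$ all of whose finite substructures map to $\fa$. Since $\fa$ is finite, $\ufl$ yields the compactness theorem of propositional logic, so $\ii\rightarrow\fa$. Concretely, I would encode a homomorphism $\ii\rightarrow\fa$ as a propositional theory: variables $p_{i,a}$ for $i\in I$ and $a\in A$, clauses saying each $i$ receives exactly one value, and clauses excluding each non-relation tuple on each tuple of $R^{\ii}$. This theory is finitely satisfiable by assumption, hence satisfiable under $\ufl$. Composing the resulting homomorphism $\ii\rightarrow\fa$ with a homomorphism $\fa\rightarrow\fb$, which exists because $(\fa,\fb)$ is a promise template, gives $\ii\rightarrow\fb$.

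There is no real obstacle, since all the work sits in the earlier lemmas. The only care needed is to check two things: that the translation of the pp-power in Lemma~\ref{ppp} is genuinely choice-free, which was asserted there, and that the compactness derivation from $\ufl$ is carried out over $\zf$. Both are standard.
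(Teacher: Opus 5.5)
Your proof is correct. The first part and the implication $K_{(\fa,\fb)}\rightarrow\ufl$ are argued exactly as in the paper.

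For the converse $\ufl\rightarrow K_{(\fa,\fb)}$ your route differs slightly from the paper's.
\begin{itemize}
\item The paper uses that $K_3$ is omniexpressive. Then $(K_3,K_3)$ pp-constructs $(\fa,\fb)$, the first part gives $K_{K_3}\rightarrow K_{(\fa,\fb)}$, and $\ufl\rightarrow K_{K_3}$ is the easy half of L\"{a}uchli's equivalence. This keeps the whole argument inside the pp-construction machinery, with $(\fa,\fb)$ and $(K_3,K_3)$ each pp-constructing the other.
\item You instead derive $\ufl\rightarrow K_{(\fa,\fb)}$ directly from propositional compactness and then compose with a homomorphism $\fa\rightarrow\fb$. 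This is more elementary, and it shows the implication for every finite template, omniexpressive or not. The paper relies on that same general fact later, in the proof of Theorem~\ref{thm:main}.
\end{itemize}

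One small wording point: you say $\ufl$ yields propositional compactness ``since $\fa$ is finite''. Finiteness of $\fa$ is not needed for that. What it does is make your ``each element receives at least one value'' clauses finite disjunctions, so the encoding is legitimate.
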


\begin{proof}
	The first statement follows from~\ref{construct_reduce} and Lemma~\ref{compact}. The second statement follows from the fact that $\ufl$ is equivalent to $K_{K_3}$ and $K_3$ is omniexpressive.
\end{proof}

\begin{corollary}\label{promise_compactness}
	For all $3\leq k\leq c \leq 2k-2$ the compactness principle $K_{(K_k,K_c)}$ is equivalent to $\ufl$.
\end{corollary}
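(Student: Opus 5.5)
By Corollary~\ref{construct_reduce2} it suffices to exhibit, for each $3\leq k\leq c\leq 2k-2$, a pp-construction of an omniexpressive template from $(K_k,K_c)$. Indeed, if $(K_k,K_c)$ pp-constructs $(K_3,K_3)$, then by Corollary~\ref{hardness} it is omniexpressive. The second part of Corollary~\ref{construct_reduce2} then gives that $K_{(K_k,K_c)}$ is equivalent to $\ufl$.

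To get the required omniexpressivity we invoke item~\ref{it:komni} of Theorem~\ref{graph_promise}. It states that $(K_k,K_c)$ is omniexpressive exactly when $3\leq k\leq c\leq 2k-2$, which is precisely our hypothesis. This item rests on the results of~\cite{brakensiek2016new}, translated to the language of minion homomorphisms via Theorem~\ref{minions} and Corollary~\ref{hardness}. Combining it with Corollary~\ref{construct_reduce2}, $K_{(K_k,K_c)}$ implies $K_{K_3}$, which by L\"{a}uchli's theorem is equivalent to $\ufl$.

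For the converse direction we need that $\ufl$ implies $K_{(K_k,K_c)}$. By the compactness argument recalled in the introduction, $\ufl$ implies $K_{K_k}$. Now let $\ii$ be a structure all of whose finite substructures map to $K_k$. Then $K_{K_k}$ gives $\ii\rightarrow K_k$, and composing with the inclusion $K_k\rightarrow K_c$ yields $\ii\rightarrow K_c$. Alternatively, this direction follows at once from Lemma~\ref{relax} together with Lemma~\ref{compact}, since $(K_k,K_c)$ is a homomorphic relaxation of $(K_k,K_k)$.

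The only nontrivial input is thus the omniexpressivity result of~\cite{brakensiek2016new}, which we take as given through Theorem~\ref{graph_promise}. I expect no real obstacle beyond checking that the cited hardness is in fact a pp-construction rather than merely a polynomial-time reduction. Theorem~\ref{graph_promise}, item~\ref{it:komni}, already records it in that form.
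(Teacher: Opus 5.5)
Your proposal is correct and matches the paper's proof, which just combines item~\ref{it:komni} of Theorem~\ref{graph_promise} (omniexpressivity of $(K_k,K_c)$ for $3\leq k\leq c\leq 2k-2$) with Corollary~\ref{construct_reduce2}. Your separate argument for $\ufl\Rightarrow K_{(K_k,K_c)}$, via the homomorphic relaxation of $(K_k,K_k)$, is valid, though that direction is already covered by Corollary~\ref{construct_reduce2}.
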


\begin{proof}
	Follows from Theorem~\ref{graph_promise} and Corollary~\ref{construct_reduce2}.
\end{proof}

	In Section~\ref{sect:weak} we will show that in fact $c$ can be increased to $2k-1$ in the statement Corollary~\ref{promise_compactness}.



\subsection{Compactness principles provable in $\zf$}

	We close this section off by characterizing those finite promise templates $(\fa,\fb)$ for which $K_{(\fa,\fb)}$ is provable in $\zf$. By the results of~\cite{rorabaugh2017logical} and~\cite{tardif2025constraint} we know that for a finite structure $\fa$ the compactness principle $K_{\fa}$ is a theorem of $\zf$ if and only if the $\fa$ has \emph{width 1}. In this subsection we show that the analogous characterization also holds for finite promise templates.
	
	We first recall the definition of width 1 for promise templates.
	
\begin{definition}
	Let $\fa$ be an arbitrary relational structure. Then we denote by $\upower(\fa)$, called the \emph{power structure of $\fa$}, the relational structure similar to $\fa$ whose domain set is $\power(\fa)\setminus \{\emptyset\}$ and for each relational symbol $R$ of arity $k$ we have $(S_0,\dots,S_{k-1})\in R^{\upower(\fa)}$ if and only if for all $j\in k$ and $a_i\in S_i$ there exists a $b_j\in S_j$ such that $(a_0,\dots,a_{j-1},b_j,a_{j+1},\dots,a_k)\in R^{\fa}$.
	
	We say that a promise template is $(\fa,\fb)$ has \emph{width 1} if $\upower(\fa)$ homomorphically maps to $\fb$.
\end{definition}

	We note that width 1 templates are usually defined differently, for instance by solvability the corresponding PCSP by arc consistency, however the equivalence to our definition can be seen easily, see the discussion in~\cite{barto2021algebraic}, Section 7.

	We write $\fh$ for the structure with $\dom(\fh)=2$ and whose relations are $$\{0\}, \{1\}, \{(x,y,z)\in \prescript{3}{}{2}: x\wedge y\rightarrow z\}, \{(x,y,z)\in \prescript{3}{}{2}: x\wedge y\rightarrow \neg z\}.$$ (The CSP over $\fh$ is usually called \emph{Horn-3-SAT}.) Using this, width 1 promise templates can be characterised as follows.
	
\begin{theorem}[\cite{barto2021algebraic}, Theorem 7.4]\label{width1}
	Let $(\fa,\fb)$ be a finite promise template. Then the following are equivalent.
\begin{enumerate}
\item $(\fa,\fb)$ has width 1.
\item $(\fh,\fh)$ pp-constructs $(\fa,\fb)$.
\end{enumerate}
\end{theorem}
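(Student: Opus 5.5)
The plan is to go through Theorem~\ref{minions}. It says that $(\fh,\fh)$ pp-constructs $(\fa,\fb)$ if and only if there is a minion homomorphism $\pol(\fh)\to\pol(\fa,\fb)$. So it suffices to describe $\pol(\fh)$ explicitly and show that a minion homomorphism out of it is the same thing as a homomorphism $\upower(\fa)\to\fb$.

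\emph{Step 1: describe $\pol(\fh)$.} I claim that the $n$-ary polymorphisms of $\fh$ are exactly the conjunctions $\wedge_S(x_0,\dots,x_{n-1})=\bigwedge_{i\in S}x_i$ with $\emptyset\neq S\subseteq n$.
\begin{itemize}
\item Each $\wedge_S$ preserves $\{0\}$, $\{1\}$ and the two Horn relations, by a direct check.
\item Conversely, a polymorphism preserves $\{0\}$ and $\{1\}$, so it is idempotent and in particular not constant.
\item Preserving $x\wedge y\rightarrow z$ and $x\wedge y\rightarrow\neg z$, together with the constants, forces it to be a meet-homomorphism. This is the classical description of the Horn clone, and a non-constant monotone meet-homomorphism on $\{0,1\}^n$ is a conjunction of a nonempty set of variables.
\end{itemize}
Under minors, $(\wedge_S)_\pi=\wedge_{\pi(S)}$. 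Hence $\pol(\fh)$ is isomorphic to the minion $\mathcal{Q}$ of nonempty finite subsets, where minors act by taking images.

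\emph{Step 2: (1)$\Rightarrow$(2).} Let $g\colon\upower(\fa)\to\fb$ be a homomorphism and set $\xi(\wedge_S)(x_0,\dots,x_{n-1})\coloneqq g(\{x_i:i\in S\})$.
\begin{itemize}
\item Compatibility with minors is immediate, since $\{x_{\pi(i)}:i\in S\}=\{x_j: j\in\pi(S)\}$.
\item To see that $\xi(\wedge_S)$ is a polymorphism, take $n$ columns that are tuples of $R^{\fa}$, and let $S_r$ be the set of entries of row $r$ in the columns indexed by $S$.
\item Because all these columns lie in $R^{\fa}$, the defining property of $\upower(\fa)$ gives $(S_0,\dots,S_{k-1})\in R^{\upower(\fa)}$. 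Applying $g$ then lands in $R^{\fb}$.
\end{itemize}

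\emph{Step 3: (2)$\Rightarrow$(1).} Let $\xi\colon\pol(\fh)\to\pol(\fa,\fb)$ be a minion homomorphism. For a nonempty $S\subseteq A$, pick any surjection $\sigma\colon m\to S$ and set $g(S)\coloneqq\xi(\wedge_m)(\sigma(0),\dots,\sigma(m-1))$.
\begin{itemize}
\item Well-definedness: factor $\sigma$ through $S$, viewed as an index set. Then $\xi(\wedge_m)(\sigma)=\xi(\wedge_S)(\mathrm{id}_S)$, because $(\wedge_m)_\sigma=\wedge_{\sigma(m)}=\wedge_S$ and $\xi$ commutes with minors. So the value does not depend on $m$ or $\sigma$.
\item Homomorphism property: given $(S_0,\dots,S_{k-1})\in R^{\upower(\fa)}$, I will use the defining property of $\upower(\fa)$ to find finitely many tuples of $R^{\fa}\cap(S_0\times\dots\times S_{k-1})$ whose $r$th coordinates cover all of $S_r$ for every $r$. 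Here I fix arbitrary elements of the other $S_i$ and repair one coordinate using the $\exists b_j$ clause.
\item Place these tuples as the columns of a matrix and apply the polymorphism $\xi(\wedge_{m'})$ row-wise. The result lies in $R^{\fb}$, and by the independence just shown, its $r$th entry is $g(S_r)$.
\end{itemize}

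I expect the main obstacle to be Step 1, namely proving that every polymorphism of $\fh$ is a nonempty conjunction. The fallback is the standard Post-lattice description of the clone of Horn relations. The covering argument in Step 3 also needs care in the degenerate unary case; if necessary I would pass to the arc-consistency formulation discussed after the definition of width~1.
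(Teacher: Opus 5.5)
Your overall strategy is sound and is the standard argument behind the cited result; the paper itself gives no proof, only the reference to \cite{barto2021algebraic}. By Theorem~\ref{minions} everything reduces to minion homomorphisms out of $\pol(\fh)$, which is the minion of nonempty subsets under images. Your Steps 2--3 are the unfolded form of the fact that minion homomorphisms from this minion to $\pol(\fa,\fb)$ correspond to homomorphisms $\free_{\pol(\fh)}(\fa)\rightarrow\fb$, and this free structure is exactly the power structure. Step 1 is easier than you expect. Apply a polymorphism $f$ of $\fh$ to the columns $(a_i,b_i,a_i\wedge b_i)$ and $(a_i\wedge b_i,a_i\wedge b_i,a_i)$ of the relation $x\wedge y\rightarrow z$. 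This gives $f(a\wedge b)=f(a)\wedge f(b)$, so $f^{-1}(1)$ is a filter of $\{0,1\}^n$, hence principal. The constant relations force this filter to be nonempty with a nonzero least element, i.e.\ $f=\wedge_S$ with $S\neq\emptyset$.

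The real issue is that you use two incompatible definitions of $\upower(\fa)$. Your covering argument in Step 3 (``fix arbitrary elements of the other $S_i$ and repair one coordinate using the $\exists b_j$ clause'') relies on the $\forall\exists$ clause as printed in the paper. That clause is a misprint, and under it the statement is false. For a unary $R$ it only requires $S_0\cap R^{\fa}\neq\emptyset$, so $\{0,1\}$ lies in both unary relations of $\upower(\fh)$ and $\upower(\fh)\nrightarrow\fh$. Yet $(\fh,\fh)$ trivially pp-constructs itself. Under the same clause your Step 2 also fails for ternary relations. For 1-in-3, the columns $(0,0,1)$ and $(1,0,0)$ give $(\{0,1\},\{0\},\{0,1\})$, and the choice $a_0=a_2=0$ admits no $b_1\in\{0\}$.

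The intended definition, used in \cite{barto2021algebraic} and the one under which your Step 2 is valid, is this: $(S_0,\dots,S_{k-1})\in R^{\upower(\fa)}$ iff every $a\in S_j$ is the $j$th coordinate of some tuple of $R^{\fa}\cap(S_0\times\dots\times S_{k-1})$. Equivalently, there is a nonempty $Q\subseteq R^{\fa}$ whose projection onto the $j$th coordinate is $S_j$ for every $j$. With this definition Step 2 is correct as written. In Step 3 the covering family is just $Q$ itself (or one witness per pair $(j,a)$ with $a\in S_j$). There is then no degenerate unary case and no need to fall back on arc consistency, and with that correction your proof is complete.
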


\begin{corollary}\label{true}
	Let $(\fa,\fb)$ be a finite promise template which has width 1. Then $K_{(\fa,\fb)}$ is provable in $\zf$.
\end{corollary}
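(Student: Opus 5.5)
By Theorem~\ref{width1}, $(\fh,\fh)$ pp-constructs $(\fa,\fb)$. Corollary~\ref{construct_reduce2} then shows that $K_{(\fh,\fh)}=K_{\fh}$ implies $K_{(\fa,\fb)}$ over $\zf$. So it suffices to prove $K_{\fh}$ in $\zf$, i.e., to show that Horn-3-SAT compactness needs no choice. Alternatively, we can avoid $\fh$ and argue directly with the power structure $\upower(\fa)$, which may be cleaner. I will present the direct argument and keep the route via $\fh$ as a fallback.

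Direct plan. Let $\ii$ be a structure all of whose finite substructures map to $\fa$. We want a homomorphism $\ii\to\fb$, and since there is a homomorphism $g:\upower(\fa)\to\fb$ it suffices to build a homomorphism $\ii\to\upower(\fa)$ without choice. For $v\in I$, I define
\[
S_v\coloneqq\{a\in A : \text{for every finite } F\subseteq I \text{ with } v\in F, \text{ some homomorphism } \ii|_F\to\fa \text{ sends } v \text{ to } a\}.
\]
This definition is canonical, so it uses no choice. Two steps remain.

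\begin{enumerate}[(1)]
\item $S_v\neq\emptyset$. Suppose not. Then for each $a\in A$ there is a finite $F_a\ni v$ such that no homomorphism of $\ii|_{F_a}$ sends $v$ to $a$. Only finitely many choices are made here (one per $a\in A$, and $A$ is finite), so this is legitimate in $\zf$. Let $F=\bigcup_a F_a$. Then $\ii|_F\to\fa$ by assumption, but that homomorphism restricts to each $\ii|_{F_a}$ and sends $v$ to some $a$, a contradiction.
\item $(S_{v_0},\dots,S_{v_{k-1}})\in R^{\upower(\fa)}$ whenever $(v_0,\dots,v_{k-1})\in R^{\ii}$. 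Fix $j$ and $a_i\in S_{v_i}$. We need $b\in S_{v_j}$ such that the modified tuple lies in $R^{\fa}$. Here I must be careful: the definition of $\upower$ as stated quantifies over $a_i$ for all $i$. If this reading is too strong, I will use the standard arc-consistency reading instead: for each $a\in S_{v_j}$ there is a tuple in $R^{\fa}$ through $a$ with all coordinates in the respective $S_{v_i}$. In that reading, argue by contradiction. If for each candidate tuple $t\in R^{\fa}$ with $t_j=a$ some coordinate $t_i\notin S_{v_i}$, pick finitely many witnessing finite sets $F_t$ (finitely many $t$, so no choice is needed). Then take $F=F_a\cup\{v_0,\dots,v_{k-1}\}\cup\bigcup_t F_t$, where $F_a$ witnesses $a\in S_{v_j}$. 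A homomorphism of $\ii|_F$ sending $v_j\mapsto a$ maps the tuple to some $t\in R^{\fa}$ with $t_j=a$, contradicting the choice of $F_t$.
\end{enumerate}

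Step (2) has a subtlety: $S_v$ is defined via ``for every $F$'', so the witnesses must be combined in the right order. The cleaner fix is to redefine $S_v$ with a union-closed exhaustion. Alternatively, I will define $S_v$ as the set of $a$ that survive arc consistency (the greatest arc-consistent family of subsets, computed by a canonical transfinite or $\omega$-stage deletion process). Then (2) holds by construction, and only nonemptiness needs the compactness argument. Showing nonemptiness requires proving that every deleted value is refuted by some finite substructure. This goes by induction on the deletion stage, and I expect it to be the main technical step: each deletion is justified by finitely many earlier deletions, so finitely many finite refutations can be unioned. No choice enters because at each stage we only need the existence of a finite witness, not a choice function over infinitely many stages. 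The induction proves ``there exists a finite $F$ refuting $(v,a)$'' for each deleted pair, and finiteness of $A$ and of the arities keeps every union finite. Composing with $g$ then gives $\ii\to\fb$.
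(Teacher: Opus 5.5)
Your proposal is correct, and your main (``direct'') argument takes a genuinely different route from the paper. The paper proves nothing about $\upower(\fa)$ directly. It cites the literature for the non-promise fact that $K_{\fh}$ is a theorem of $\zf$. It then transfers this to $(\fa,\fb)$ via Theorem~\ref{width1} (width 1 iff $(\fh,\fh)$ pp-constructs the template) and the pp-construction-to-finite-reduction machinery behind Corollary~\ref{construct_reduce2}. That is exactly your fallback, except that the $\zf$-provability of $K_{\fh}$, which you leave open, is imported rather than proved.

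Your direct argument instead shows in $\zf$ that the canonical map $v\mapsto S_v$ (the values not refuted by any finite substructure) is a homomorphism $\ii\to\upower(\fa)$, and then composes it with a single homomorphism $\upower(\fa)\to\fb$. This is self-contained and bypasses both Theorem~\ref{width1} and the pp-power reduction. It also makes explicit that only finitely many existential instantiations are ever used. Applied to $\fa=\fb=\fh$, it even supplies the missing piece of your fallback. It is the kind of adaptation the paper only alludes to in the remark after the corollary. The paper's route is shorter given the cited results and showcases the reduction machinery used later.

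Some minor points:
\begin{itemize}
\item Your steps (1) and (2) are already complete as written. The ``subtlety'' paragraph and the arc-consistency deletion process are unnecessary, though the latter also works.
\item In (2) there is no witness set $F_a$ for $a\in S_{v_j}$. That membership is a universal statement, so you simply instantiate it at $F=\{v_0,\dots,v_{k-1}\}\cup\bigcup_t F_t$.
\item Adopting the standard arc-consistency reading of $\upower(\fa)$ is the right call. It is the notion for which Theorem~\ref{width1} is proved in the cited source, and hence what the paper's proof implicitly uses.
\end{itemize}
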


\begin{proof}
	The case when $\fa=\fb$ is shown in~\cite{rorabaugh2017logical}. In particular, $K_{\fh}$ is provable in $\zf$.
	
	Now let $(\fa,\fb)$ be an arbitrary finite promise template. Then by Theorem~\ref{width1} we know that $(\fh,\fh)$ pp-constructs $(\fa,\fb)$. Therefore, by Lemma~\ref{compact} it follows that $K_{(\fa,\fb)}$ is provable in $\zf$.
\end{proof}

\begin{remark}
	We remark that the proof of~\cite{rorabaugh2017logical} can also be adapted to the promise setting in a straightforward way which would give an alternative proof for Corollary~\ref{true}.
\end{remark}

	In~\cite{tardif2025constraint} it is shown that if $\fa$ does not have width 1, i.e., there is no homomorphism from $\upower(\fa)$ to $\fa$, then $K_{\fa}$ implies the existence a non-measurable subset of $\mathbb{R}^3$, and therefore it is not provable in $\zf$. Note, however, that the proof of Lemma 4.2 and Theorem 1.2 in the aforementioned paper also works word for word if we assume $K_{(\fa,\fb)}$ instead of $K_{\fa}$, and we replace all occurrences of the structure $\fa$ with $\fb$, while keeping all occurrences of $\upower(\fa)$. Therefore, we obtain the following.
	
\begin{theorem}[$\zf$]\label{not_true}
	Let $(\fa,\fb)$ be a finite promise template which does not have width 1 and suppose that $K_{(\fa,\fb)}$ holds. Then there exists a non-measurable subset of $\mathbb{R}^3$.
\end{theorem}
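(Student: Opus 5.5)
The plan is to follow the paper's own hint and adapt the proofs of Lemma 4.2 and Theorem 1.2 of~\cite{tardif2025constraint}, replacing the target $\fa$ by $\fb$ while keeping $\upower(\fa)$ unchanged. We build a single infinite structure $\ii$ similar to $\fa$ from Borel data on $\mathbb{R}^3$. We then check that every finite substructure of $\ii$ maps to $\fa$. Applying $K_{(\fa,\fb)}$ gives a homomorphism $h:\ii\rightarrow\fb$, and we show that $h$ cannot be Lebesgue measurable. Some set $h^{-1}(b)$, or a set Borel-computable from the family $(h^{-1}(b))_{b\in B}$, is then non-measurable, which is the required conclusion.

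First, the construction of $\ii$. The universe is $\mathbb{R}^3$, or a set of orbits under a countable group $\Gamma$ acting freely by measure-preserving maps; in \cite{tardif2025constraint} these come from isometries or a paradoxical-type action. For each relation $R$ we add tuples $(x,\gamma_1x,\dots,\gamma_{k-1}x)$, with $\gamma_i$ ranging over finitely many group elements that are chosen freely, so that finite pieces of $\ii$ look like finite subtrees of a free, hence acyclic, instance. Acyclic (tree-like) instances have the property that they map to $\fa$ as soon as they map to $\upower(\fa)$. This is the usual arc consistency fact: on a tree, any consistent family of nonempty subsets lets us choose actual values top-down. To guarantee the hypothesis we design $\ii$ so that the ``all values'' labelling, or the arc-consistent pruning of it, is a homomorphism to $\upower(\fa)$. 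This step is the same as in the non-promise case, since it involves only $\fa$.

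Second, ruling out a measurable homomorphism $h:\ii\rightarrow\fb$. Suppose $h$ were measurable. We average $h$ over the ergodic or free action: for almost every point, the distribution of colours along the $\Gamma$-orbit, or density points of the sets $h^{-1}(b)$, yields for each point a nonempty set of ``typical'' colours $S(x)\subseteq B$. By measurability and a Lebesgue density argument, these sets give a homomorphism from a structure containing enough of the consistent sets to $\fb$. Reversing the roles exactly as in \cite{tardif2025constraint}, this gives a homomorphism $\upower(\fa)\rightarrow\fb$. That contradicts the assumption that $(\fa,\fb)$ does not have width 1, since width 1 is by definition $\upower(\fa)\rightarrow\fb$.

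The main obstacle is this second step. We must verify that the argument only ever uses $\fb$ as the codomain of $h$ and $\upower(\fa)$ as the source of the final contradiction, and never needs $\fa=\fb$. Concretely, we need that each subset $S\subseteq A$ occurring as a vertex of $\upower(\fa)$ is realized by some positive-measure, or generic, configuration in $\ii$, and that the $\fa$-edges between such sets force $\fb$-edges of $h$. I would check this by writing the tuples of $\ii$ indexed by tuples of $\upower(\fa)$ in the construction, so that $S\mapsto$ (the value of $h$ on a generic point of type $S$) is visibly a homomorphism $\upower(\fa)\rightarrow\fb$.
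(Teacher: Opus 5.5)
Your plan is the same as the paper's. The paper's whole proof is the remark that the proofs of Lemma~4.2 and Theorem~1.2 of \cite{tardif2025constraint} go through verbatim under $K_{(\fa,\fb)}$, with $\fb$ replacing $\fa$ as the target of the homomorphism and $\upower(\fa)$ left unchanged, and that is exactly the check you single out as the main obstacle.

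One caution about your own reconstruction of that step. Extracting sets of ``typical'' colours cannot give the contradiction, because for any homomorphism $g\colon\fa\to\fb$ the map $S\mapsto g[S]$ is already a homomorphism $\upower(\fa)\to\upower(\fb)$. What you must extract is a single colour for each vertex $S$ of $\upower(\fa)$, for example the values of $h$ at one point $x$ across all types $S$. A density-type argument delivers this only if the tuples of $\ii$ join $x$ to points that measurability forces to share its colours, for instance via isometries accumulating at the identity. Joining $x$ to $\gamma x$ for a fixed finite set of free generators $\gamma$ does not achieve this.
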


\begin{corollary}[$\zf$]\label{when_true}
	Let $(\fa,\fb)$ be a finite promise template. Then $K_{(\fa,\fb)}$ is provable in $\zf$ if and only if $(\fa,\fb)$ has width 1.
\end{corollary}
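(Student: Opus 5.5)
The statement is an equivalence, so I will prove the two directions separately.

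\emph{If $(\fa,\fb)$ has width 1, then $K_{(\fa,\fb)}$ is provable in $\zf$.} This is exactly Corollary~\ref{true}. Recall how it goes. By Theorem~\ref{width1}, $(\fh,\fh)$ pp-constructs $(\fa,\fb)$. By Corollary~\ref{construct_reduce2}, $K_{\fh}$ then implies $K_{(\fa,\fb)}$. Finally, $K_{\fh}$ is a theorem of $\zf$ by~\cite{rorabaugh2017logical}. Nothing new is needed here beyond checking that all of these steps are carried out in $\zf$. They are: Theorem~\ref{width1} and the minion machinery are statements about finite objects, and the finite reductions of Section~\ref{sect:promise} are explicit syntactic constructions.

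\emph{If $(\fa,\fb)$ does not have width 1, then $K_{(\fa,\fb)}$ is not provable in $\zf$.} I will argue by contraposition via Theorem~\ref{not_true}. Suppose $\zf\vdash K_{(\fa,\fb)}$. Theorem~\ref{not_true} is itself a $\zf$ theorem, so it follows that $\zf$ proves the existence of a non-measurable subset of $\mathbb{R}^3$.

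To contradict this, I need a model of $\zf$ in which every subset of $\mathbb{R}^3$ is Lebesgue measurable. Solovay's model provides one, but its construction uses an inaccessible cardinal, and relying on it would only give a relative result. The main obstacle is therefore to avoid the inaccessible. I would use Shelah's theorem that a model of $\zf+\mathrm{DC}$ in which every set of reals has the Baire property can be built from $\operatorname{Con}(\zf)$ alone. The cost is that the argument of~\cite{tardif2025constraint} must then yield a set without the Baire property rather than a non-measurable one. I expect the proof there to adapt, since it is a Fubini/Kuratowski--Ulam-style contradiction. If the adaptation fails, I would instead state the measurable version relative to the consistency of an inaccessible, following~\cite{tardif2025constraint}.

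Once such a model is available, $K_{(\fa,\fb)}$ fails in it, so it is not provable in $\zf$ (assuming $\zf$ is consistent, which is implicit in any unprovability claim). Combining this with the first direction gives the stated equivalence.
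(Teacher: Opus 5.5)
Your proof is the paper's. The forward direction is Corollary~\ref{true}, and the converse combines Theorem~\ref{not_true} with the existence of a model of $\zf$ in which every subset of $\mathbb{R}^3$ is measurable, which the paper simply asserts; the standard such model is Solovay's, so the unprovability is tacitly relative to an inaccessible, exactly as in your fallback. The detour through Shelah's Baire-property model is not in the paper and, as you acknowledge, is unverified: transferring the argument of~\cite{tardif2025constraint} from measure to category is not automatic, since for actions of free groups of rotations the two notions can genuinely diverge (Dougherty and Foreman's Banach--Tarski decompositions into pieces with the Baire property are an example), so present that part as a possible strengthening rather than as part of the proof.
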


\begin{proof}
	Follows from Corollary~\ref{true}, Theorem~\ref{when_true} and from the fact that there exist models of $\zf$ where all subsets of $\mathbb{R}^3$ are measurable.
\end{proof}

\section{Finite choice from the absence of cyclic polymorphisms}\label{sect:ac_fin}

	In this section we show that if a promise template does not have a cyclic polymorphism then the corresponding compactness principle implies the axiom $\acf$. 
	
	Our proof relies on the following more specific lemma.

\begin{lemma}\label{thm:finite}
    Let $p$ be a prime and let $(\fa,\fb)$ be a finite promise template which does not have a cyclic polymorphism of arity $p$. Then $K_{(\fa,\fb)}$ implies $\kw(p)$.
\end{lemma}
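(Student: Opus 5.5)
We argue by constructing, from a family $X$ of $p$-element sets, an instance $\ii_X$ similar to $\fa$ that is finitely satisfiable in $\fa$. A homomorphism $\ii_X\to\fb$ will then be converted into a $\kw(p)$-selector. The guiding idea is that an $\fa$-instance built symmetrically over a $p$-element set $x$ has homomorphisms to $\fb$ which, if they do not "break the symmetry" of $x$, yield a cyclic polymorphism of arity $p$.

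\textbf{Construction.} Fix $x\in X$. Let $\ii_x$ be the instance whose domain is the set of all maps $x\to A$, i.e. the power $\fa^x$, with relations interpreted coordinatewise. Let $\ii_X$ be the disjoint union of the $\ii_x$ for $x\in X$; it is defined without choice, since its domain is $\{(x,a): x\in X,\ a\in \prescript{x\!}{}{A}\}$. Each $\ii_x$ maps to $\fa$ by evaluation at any element of $x$. Hence every finite substructure of $\ii_X$, which meets only finitely many components, maps to $\fa$: we only have to choose one element from each of finitely many sets, which is fine in $\zf$. By $K_{(\fa,\fb)}$ there is a homomorphism $h\colon\ii_X\to\fb$, and for each $x$ its restriction $h_x\colon \fa^x\to\fb$.

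\textbf{Extracting the selector.} Fix $x$ and consider the group $\sym(x)$ acting on $\fa^x$ by permuting coordinates; these are automorphisms, so $h_x\circ\sigma$ is again a homomorphism $\fa^x\to\fb$. After identifying $x$ with $p$ via a bijection $\beta$, the map $h_x$ becomes a $p$-ary polymorphism $f_\beta$. By hypothesis $f_\beta$ is not cyclic, so for the cyclic shift $c$ there is a tuple $a$ with $f_\beta(a)\ne f_\beta(a\circ c)$.

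We need to turn this non-invariance into a canonical proper nonempty subset of $x$, depending only on $h_x$ and not on $\beta$. Call $S\subseteq \sym(x)$ the stabiliser of $h_x$. It is a subgroup, and it contains no $p$-cycle: otherwise, choosing $\beta$ so that this $p$-cycle corresponds to $c$, the polymorphism $f_\beta$ would be cyclic. Since a subgroup of $\sym(x)$ of order divisible by $p$ contains an element of order $p$, which is a $p$-cycle, we get $p\nmid |S|$. So $S$ is not transitive on $x$, because a transitive group on $p$ points has order divisible by $p$. The orbits of $S$ on $x$ therefore form a canonical partition of $x$ into at least two parts. Next take the minimal size $m$ of an orbit and let $f(x)$ be the union of all orbits of size $m$. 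If this is a proper subset we are done. If every orbit has size $m$, then $m\mid p$ forces $m=1$, so $S$ acts trivially on $x$; in that case we need a different canonical choice.

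\textbf{Main obstacle.} The main difficulty is this trivial-stabiliser case. To handle it, consider the orbit of $h_x$ under $\sym(x)$, which has size $p!$. We want a definable proper nonempty subset of $x$ from the $p$ distinct maps $h_x\circ\tau$ for transpositions or shifts. One option is to replace $\ii_x$ by a richer symmetric instance, for example $\fa^{\prescript{x\!}{}{A}}$-style, or products over $\mathbb{Z}_p$-orbits, so that the stabiliser argument always yields a nontrivial non-transitive group. Alternatively we can work with $\mathbb{Z}_p$-torsor structure. If the trivial case resists, I would first try building $\ii_x$ from all cyclic orders on $x$: take the domain to be pairs (cyclic order $o$, map $x\to A$) and glue the copies compatibly. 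Then the relevant symmetry group of the instance is generated by cyclic shifts, and non-cyclicity gives that $h$ is non-invariant under some shift. Comparing values along the $p-1$ cyclic orders should then define a proper subset of $x$ canonically.
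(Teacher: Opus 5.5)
There is a genuine gap, in the final step. Your instance (the disjoint union of the powers $\fa^x$), the finite-satisfiability check, and the argument that the stabiliser $S\le\sym(x)$ of $h_x$ contains no $p$-cycle and is therefore intransitive are all correct and match the paper. The problem is turning $S$ into a canonical proper nonempty subset of $x$. The orbit partition of $S$ cannot do this when $S$ is trivial, which, as you observe, is the only case in which all orbits have equal size. Your remedies for that case (richer instances, torsors, gluing over cyclic orders) are only sketched, so as written the proposal does not prove $\kw(p)$. Incidentally, a $p$-element set has $(p-1)!$ cyclic orders, not $p-1$.

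The missing idea is that all the polymorphisms $f_\beta$ lie in one fixed finite set $\prescript{A^p\!}{}{B}$. This set can be ordered outright in $\zf$, for instance lexicographically since $A$ and $B$ are finite. Fix one such ordering $\prec$ once, for all $x$ simultaneously. Let $g(x)$ be the $\prec$-least $f_\beta$ over all bijections $\beta\colon x\to p$, and put $f(x)\coloneqq\{\beta^{-1}(0): f_\beta=g(x)\}$.

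If $f_\alpha=g(x)$, then the bijections $\beta$ with $f_\beta=g(x)$ are exactly those in $\Gamma\alpha=\alpha S$, where $\Gamma=\alpha S\alpha^{-1}\le\sym(p)$ is the stabiliser of $g(x)$. Hence $f(x)=\alpha^{-1}(\Gamma\cdot 0)$, which is the $S$-orbit of $\alpha^{-1}(0)$. This set is nonempty, and it is proper because $S$ is intransitive. It depends only on $h_x$ and the fixed ordering, so no choice is used.

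This is exactly the paper's argument. It treats all cases uniformly, so the orbit-size analysis becomes unnecessary. In the trivial-stabiliser case it simply singles out one point of $x$.
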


\begin{proof}
    Let $X$ be family of sets of size $p$. We will find a function $X\rightarrow \mathcal{P}(\bigcup X)$ such that $f(C)$ is a proper subset of $C$ for all $C\in X$. We can assume without loss of generality that the elements of $X$ are pairwise disjoint. We define a structure $\fm$ with $\dom(\fm)=M\coloneqq \bigcup_{C\in X}\prescript{C\!}{}{A}$ similar to $\fa$ as follows. Let $R$ be a relation of $\fa$ of arity $k$. Then for each $C\in X$ the tuple $((x_c^1)_{c\in C},\dots,(x_c^k)_{c\in C})\in (\prescript{C\!}{}{A})^k$ is contained in $R^{\fm}$ if and only if $(x_c^1,\dots,x_c^k)\in R$ for all $c\in C$. No other tuples are in the relation $R^{\fm}$. Note that $\fm$ is a disjoint union of copies of $\fa^p$. If $F\subseteq M$ is finite then $F$ is contained in a structure isomorphic to some finite disjoint union of copies $\fa^p$. Clearly, these structures homomorphically map to $\fa$, for instance we can take any projection map on each copy of $\fa^p$. Therefore, by our assumption, $\fm$ admits a homomorphism $h$ to $\fb$. 

    Now, let $\prec$ be any ordering of the $p$-ary polymorphisms of $(\fa,\fb)$. Such an ordering exists since both $A$ and $B$, and thus also $\prescript{\prescript{p\!}{}A\!}{}{B}$ are finite. Now let $C\in X$ be arbitrary. We define $f(C)$ as follows. For a map $\alpha: C \rightarrow p$ and $f\in \prescript{\prescript{C\!}{}A\!}{}{B}$ we write $f_{\alpha}$ for the map from $\prescript{p\!}{}{A}\rightarrow B$ which maps each tuple $x\in \prescript{p\!}{}{A}$ to $f(x\circ \alpha)$. Then $({h|}_{\prescript{C\!}{}{A}})_{\alpha}$ is a polymorphism of $(\fa,\fb)$ for all bijections $\alpha$ from $C$ to $p$. We define $g(C)$ to be the $\prec$-smallest polymorphism which can be obtained this way and we define
\[f(C)\coloneqq \{\beta^{-1}(0)\mid \beta\colon C\rightarrow p, ({h|}_{\prescript{C\!}{}{A}})_{\beta}=g(C)\}.\] We show that this $f$ suffices.

    Clearly, $\alpha^{-1}(0)\in f(C)$ for the map $\alpha$ in the definition of $g(C)$. Thus, $f(C)$ is a nonempty subset of $C$ for all $C\in X$. We need to show that $f(C)\neq C$. Let $\Gamma\coloneqq\{\sigma\in \sym(p): g(C)_{\sigma}=g(C)\}$. Then clearly $\Gamma$ is a subgroup of symmetric group $\sym(p)$. We claim that $\Gamma$ is not transitive. Suppose on the contrary that $\Gamma$ is transitive. Then the stabiliser of each element is an index $p$ subgroup of $\Gamma$. In particular $p\mid|\Gamma|$. Thus, by Cauchy's theorem it follows that $\Gamma$ contains some element $\gamma$ of order $p$. This is only possible if $\gamma$ is a $p$-cycle, say $\gamma=(i_0\,\dots\,i_{p-1})$. Let $g'\coloneqq (x_0,\dots,x_{p-1})\mapsto g(C)(x_{i_0},\dots,x_{i_{p-1}})$. Then we have
\begin{align*}
g'(x_1,\dots,x_{p-1},x_0)=&g(C)(x_{i_1},\dots,x_{i_{p-1}},x_{i_0})\\=&g(C)(x_{\gamma(i_0)},\dots,x_{\gamma(i_{p-1})})\\=&g(C)(x_{i_0},\dots,x_{i_{p-1}})=g'(x_0,\dots,x_{p-1})
\end{align*} where the second-to-last equality follows from the definition of $\Gamma$. This implies that $g'$ is a cyclic polymorphism of $(\fa,\fb)$ (of arity $p$) which contradicts our assumption.

    Now let $\alpha$ be a bijection $p\rightarrow C$ as in the definition of $g(C)$. Then we have 
\begin{align*}
f(C)=&\{\beta^{-1}(0)\mid \beta\colon C\rightarrow p, ({h|}_{\prescript{C\!}{}{A}})_{\beta}=g(C)\}\\=&\{\alpha^{-1}(\sigma^{-1}(0))\mid \sigma\in \sym(p), ({h|}_{\prescript{C\!}{}{A}})_{\sigma\circ \alpha}=g(C)\}\\
=&\{\alpha^{-1}(\sigma^{-1}(0))\mid \sigma\in \sym(p), g(C)_\sigma=g(C)\}\\
=&\alpha^{-1}(\Gamma(0)).
\end{align*}

    Since $\Gamma$ is not transitive, this implies that $f(C)\neq \alpha^{-1}(p)=C$ which finishes the proof of the theorem.
\end{proof}

	Let us denote by $\vec C_n$ the directed $n$-cycle digraph for $n\in \omega$. It was shown in~\cite{rorabaugh2017logical}, Proposition 6.2 that $K_{\vec C_p}$ implies $\kw(p)$ for any prime number $p$. This now also follows from our general result using the following observation.

\begin{proposition}
	Let $\fg$ be a finite loopless digraph such that $\vec C_p\rightarrow \fg$. Then $(C_p,\fg)$ does not have a cyclic polymorphism of arity $p$.
\end{proposition}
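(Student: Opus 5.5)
Note first that the statement writes $(C_p,\fg)$, but the hypothesis $\vec C_p\rightarrow \fg$ shows the intended template is $(\vec C_p,\fg)$; we prove it in that form. The plan is to evaluate a putative cyclic polymorphism on a single well-chosen edge of the power $\vec C_p^{\,p}$ and obtain a loop in $\fg$, contradicting looplessness.

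Suppose $f\colon (\mathbb{Z}_p)^p\rightarrow \dom(\fg)$ is a cyclic polymorphism of $(\vec C_p,\fg)$ of arity $p$. Identify the domain of $\vec C_p$ with $\mathbb{Z}_p$, with edges $(i,i+1)$. Take the tuple $x\coloneqq(0,1,\dots,p-1)$ and its coordinatewise successor $x+1\coloneqq(1,2,\dots,p-1,0)$. In each coordinate $i$ the pair $(x_i,x_i+1)=(i,i+1)$ is an edge of $\vec C_p$. Hence $(x,x+1)$ is an edge of $\vec C_p^{\,p}$, and since $f$ is a homomorphism, $(f(x),f(x+1))$ is an edge of $\fg$.

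Now $x+1$ is exactly the cyclic shift $(x_1,\dots,x_{p-1},x_0)$ of $x$. Cyclicity therefore gives $f(x+1)=f(x)$, so $(f(x),f(x))$ is an edge of $\fg$, i.e.\ a loop. This contradicts the assumption that $\fg$ is loopless, so no cyclic polymorphism of arity $p$ exists.

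The argument does not use primality of $p$; primality only matters when combining with Lemma~\ref{thm:finite} to recover the cited result that $K_{\vec C_p}$ implies $\kw(p)$ (first passing to $(\vec C_p,\fg)$ via homomorphic relaxation, Lemma~\ref{relax}). There is no real obstacle; the only point needing care is the sign convention, namely that the coordinatewise successor of $x$ coincides with the left cyclic shift appearing in the definition of a cyclic operation. With the other orientation one would use $(p-1,\dots,0)$ instead of $x$, and the same reasoning applies.
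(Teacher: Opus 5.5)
Your proof is correct and is essentially the paper's argument: apply the polymorphism to the edge $\bigl((0,1,\dots,p-1),(1,\dots,p-1,0)\bigr)$ of $\vec C_p^{\,p}$, then use cyclicity to identify the two endpoints' images, which gives a loop in $\fg$. You were also right to read $(C_p,\fg)$ as $(\vec C_p,\fg)$, which is how the paper's own proof treats it.
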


\begin{proof}
	Let us assume for contradiction that $f\in \pol(\vec C_p,\fg)$ is cyclic and of arity $p$. Then since $f$ is a polymorphism, it follows that $f(0,1,\dots,p-1)$ is connected to $f(1,\dots,p-1,0)$ in $\fg$. On the other hand, by the cyclicity of $f$ we get $f(0,1\dots,p-1)=f(1,\dots,p)$ which would mean that $\fg$ has a loop.
\end{proof}

\begin{corollary}\label{cycle}
	Let $\fg$ be a finite loopless digraph such that $\vec C_p\rightarrow \fg$. Then $K_{(\vec C_p,\fg)}$ implies $\kw(p)$.
\end{corollary}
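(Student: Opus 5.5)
The corollary follows by combining the preceding proposition with Lemma~\ref{thm:finite}. There is no real obstacle here.

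The first step is to fix the statement. We read $\vec C_p$ as the directed $p$-cycle on vertex set $p$, with edges $(i,i+1 \bmod p)$. The ``$C_p$'' written in the preceding proposition means this $\vec C_p$. The hypothesis $\vec C_p\rightarrow \fg$ says exactly that $(\vec C_p,\fg)$ is a finite promise template, so Lemma~\ref{thm:finite} can be applied to it.

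The second step is to check that $(\vec C_p,\fg)$ has no cyclic polymorphism of arity $p$. This is the preceding proposition, and I would restate its argument briefly. Suppose $f$ were such a polymorphism. Consider the tuples $(0,1,\dots,p-1)$ and $(1,2,\dots,p-1,0)$. They are coordinatewise adjacent in $\vec C_p$, so they form an edge of $\vec C_p^{\,p}$. Since $f$ is a homomorphism from $\vec C_p^{\,p}$ to $\fg$, the image of this pair is an edge of $\fg$. Cyclicity of $f$ says the two images are the same vertex. That edge is then a loop, which contradicts the assumption that $\fg$ is loopless.

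The final step is to apply Lemma~\ref{thm:finite} with the prime $p$ and the template $(\fa,\fb)=(\vec C_p,\fg)$. Its only hypothesis is the absence of a $p$-ary cyclic polymorphism, which the second step supplies, so $K_{(\vec C_p,\fg)}$ implies $\kw(p)$.

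As a remark, taking $\fg=\vec C_p$ recovers the result of~\cite{rorabaugh2017logical} that $K_{\vec C_p}$ implies $\kw(p)$.
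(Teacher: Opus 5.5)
Your proposal is correct and follows the paper's argument exactly. The preceding proposition shows that $(\vec C_p,\fg)$ has no $p$-ary cyclic polymorphism, because the edge from $(0,1,\dots,p-1)$ to $(1,\dots,p-1,0)$ in $\vec C_p^{\,p}$ would be sent to a loop of $\fg$; Lemma~\ref{thm:finite}, applied with $p$ prime as the statement implicitly assumes, then yields $\kw(p)$.
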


	In the rest of the paper we are only interested in the case when Lemma~\ref{thm:finite} can be applied to all prime numbers $p$.

\begin{theorem}\label{cor:finite}
    Let $(\fa,\fb)$ be a finite promise template which does not have a cyclic polymorphism. Then $K_{(\fa,\fb)}$ implies $\acf$.
\end{theorem}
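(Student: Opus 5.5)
The plan is to combine Lemma~\ref{thm:finite} with Corollary~\ref{kw_prime}. Since $(\fa,\fb)$ has no cyclic polymorphism at all, in particular for every prime $p$ it has no cyclic polymorphism of arity $p$. So, assuming $K_{(\fa,\fb)}$, Lemma~\ref{thm:finite} applies to each prime $p$ and yields $\kw(p)$ for every prime $p$.

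It then remains to pass from $\kw(p)$ for all primes $p$ to $\acf$. This is exactly the implication from item~\ref{it:kwp} to item~\ref{it:acf} in Corollary~\ref{kw_prime}. That implication rests on Corollary~\ref{prime_smallest}, the fact that the least $k$ with $\ac(k)$ failing is prime, which comes from Theorem~\ref{prime_sum}. It also uses the observation that $\kw(k)\wedge\bigwedge_{\ell\le k/2}\ac(\ell)$ implies $\ac(k)$. Combining the two gives $\acf$, which completes the argument.

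I expect no real obstacle here, since all the work has already been done in Lemma~\ref{thm:finite}. The one point to check is that the lemma only needs the absence of cyclic polymorphisms of prime arity, which is weaker than the hypothesis of the theorem. So the hypothesis "no cyclic polymorphism of any arity" is more than enough. Quantifying over all primes at once in $\zf$ is harmless, because we derive each $\kw(p)$ from the single assumption $K_{(\fa,\fb)}$ by a uniform argument and then apply Corollary~\ref{kw_prime}.
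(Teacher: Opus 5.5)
Your proposal is correct and follows the paper's proof exactly. The paper also derives the theorem directly from Lemma~\ref{thm:finite}, applied to every prime $p$ to get $\kw(p)$, and then uses the implication from $\kw(p)$ for all primes $p$ to $\acf$ in Corollary~\ref{kw_prime}.
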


\begin{proof}
    Follows from Lemma~\ref{thm:finite} and Corollary~\ref{kw_prime}.
\end{proof}

\begin{remark}
	One can notice that in Theorem~\ref{cor:finite} it is enough to assume that $(\fa,\fb)$ does not have a cyclic polymorphism of any prime arity. However, this does not make any difference because in any minion the existence of a cyclic operation also implies the existence of a cyclic operation of prime arity.
\end{remark}

    Note that in the case when $\fa=\fb$ Corollary~\ref{cor:finite} is uninteresting since in this case it follows from Theorem~\ref{minor_equiv} that the assumption of Theorem~\ref{cor:finite} can only hold if $\fa$ is omniexpressive, and in this case we already know by Corollary~\ref{construct_reduce2} that $K_{\fa}$ implies $\ufl$. However, in the promise setting we have several interesting examples for templates without cyclic polymorphisms.

\begin{example}
\begin{itemize}
    \item By item~\ref{it:ksiggers} of Theorem~\ref{graph_promise} we know that if $3\leq k\leq c$ then $(K_k,K_c)$ does not have a Siggers polymorphism, and thus by Proposition~\ref{cycl} it also does not have a cyclic polymorphism.
    \item By item~\ref{it:ksiggers} of Theorem~\ref{graph_promise} we know that if $2\leq k\leq c$ then $(H_k,H_c)$ does not have an Ol\v{s}\'{a}k polymorphism, and thus by Proposition~\ref{cycl} it also does not have a cyclic polymorphism.
\end{itemize} 
\end{example}

\begin{corollary}[$\zf$]\label{cor:ac}
	Let us assume that either $K_{(K_k,K_c)}$ holds for some $3\leq k\leq c$ or $K_{(H_2,H_c)}$ holds for some $c\geq 2$. Then $\acf$ holds.
\end{corollary}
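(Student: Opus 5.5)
The plan is to reduce Corollary~\ref{cor:ac} directly to Theorem~\ref{cor:finite}. That theorem already says that $K_{(\fa,\fb)}$ implies $\acf$ whenever the finite promise template $(\fa,\fb)$ has no cyclic polymorphism. So it is enough to check, in each of the two cases of the corollary, that the template in question has no cyclic polymorphism, and then invoke that theorem.

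\textbf{Case $(K_k,K_c)$ with $3\leq k\leq c$.} By item~\ref{it:ksiggers} of Theorem~\ref{graph_promise}, $(K_k,K_c)$ has a Siggers polymorphism only when $k=2$, so here it has none. Suppose it had a cyclic polymorphism. Since $\pol(K_k,K_c)$ is a minion, item~\ref{it:cyc_area} of Proposition~\ref{cycl} would give an area-rare polymorphism, and item~\ref{it:area_so} would then give a Siggers polymorphism, which is a contradiction. Hence $(K_k,K_c)$ has no cyclic polymorphism, and Theorem~\ref{cor:finite} yields $\acf$ from $K_{(K_k,K_c)}$.

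\textbf{Case $(H_2,H_c)$ with $c\geq 2$.} By item~\ref{it:holsak} of Theorem~\ref{graph_promise} (with $k=2$), $(H_2,H_c)$ has no Ol\v{s}\'{a}k polymorphism. The same chain through Proposition~\ref{cycl} (cyclic implies area-rare implies Ol\v{s}\'{a}k) shows that it has no cyclic polymorphism. Theorem~\ref{cor:finite} then gives $\acf$ from $K_{(H_2,H_c)}$.

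There is no real obstacle here; the only care needed is to cite the correct items. In particular, the $H$ case uses the Ol\v{s}\'{a}k item~\ref{it:holsak} rather than the Siggers item, and Proposition~\ref{cycl} must be applied inside the minion $\pol(\fa,\fb)$, which is legitimate because polymorphisms of a promise template form a minion. All the substantive work lies in Lemma~\ref{thm:finite}, Corollary~\ref{kw_prime}, and the cited algebraic results on $(K_k,K_c)$ and $(H_k,H_c)$.
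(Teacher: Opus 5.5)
Your proposal is correct and follows essentially the same route as the paper. The paper derives the corollary from the Example just before it: $(K_k,K_c)$ with $k\geq 3$ has no Siggers polymorphism, and $(H_2,H_c)$ has no Ol\v{s}\'{a}k polymorphism, so by Proposition~\ref{cycl} neither has a cyclic polymorphism, and Theorem~\ref{cor:finite} then gives $\acf$. You also cite the right item (item~\ref{it:holsak} of Theorem~\ref{graph_promise}) for the hypergraph case, whereas the paper's Example mistakenly points to the Siggers item there.
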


	In Section~\ref{sect:main} we will see that all compactness principles of the form $K_{(H_2,H_c)}: c\geq 2$ are in fact equivalent to $\ufl$. Interestingly, however, our proof for this stronger statement relies on Corollary~\ref{cor:ac}.

\section{Reductions using weak minion homomorphisms}\label{sect:weak}

	In this section we consider a more general notion of minion homomorphisms, called \emph{minion homomorphisms} (or $(d,r)$-minion homomorphisms) which was originally introduced in~\cite{barto2021algebraic}. The main purpose of this notion was to give a general algebraic framework for proving some hardness results for PCSPs which do not follow from omniexpressivity. This includes the templates $(K_k,K_{2k-1})$ for all $k\geq 3$ and $(\hh_2,\hh_c)$ for all $c\geq 2$, see our discussion below. We will show how this algebraic approach can be adapted in the context of compactness principles, and as a result we will obtain that over $\zf$ all compactness principles corresponding to the aforementioned promise templates are equivalent to $\ufl$.

\begin{notation}
	Let $\mathcal{M}$ be a minion. A \emph{chain of minors} is a sequence of minors $$t_0\xrightarrow{\pi_{0,1}}t_1\xrightarrow{\pi_{1,2}}\dots \xrightarrow{\pi_{r-1,r}}t_r$$ in $\mathcal{M}$. For such a sequence for $i<j$ we write $\pi_{i,j}$ for the composition $\pi_{j-1,j}\circ \dots \circ \pi_{i,i+1}$. Note that in this case we have $t_i\xrightarrow{\pi_{i,j}}t_j$.
\end{notation}

\begin{definition}\label{def:min}
	 Let $\mathcal{M},\mathcal{N}$ be minions, and let $d,r\in \omega$. Then a map $\xi: \mathcal{M}\rightarrow \mathcal{P}(\mathcal{N})$ is a \emph{$(d,r)$-minion homomorphism} if
\begin{enumerate}[(i)]
\item $\xi$ preserves arities, i.e., for all $t\in \mathcal{M}$ all elements of $\xi(t)$ have the same arity as $t$,
\item $|\xi(t)|\leq d$ for all $t\in \mathcal{M}$, and
\item for all chains of minors $t_0\xrightarrow{\pi_{0,1}}t_1\xrightarrow{\pi_{1,2}}\dots \xrightarrow{\pi_{r-1,r}}t_r$ in $\mathcal{M}$ there exists $i<j, g\in \xi(t_i), h\in \xi(t_j)$ such that $g\xrightarrow{\pi_{i,j}}h$.
\end{enumerate}

	We say that a map $\xi: \mathcal{M}\rightarrow \mathcal{P}(\mathcal{N})$ is a \emph{weak minion homomorphism} if it is a $(d,r)$-minion homomorphism for some $d,r\in \omega$.
\end{definition}


\begin{remark}
	Note that a minion homomorphism is the same as a $(1,1)$-minion homomorphism.
\end{remark}

	We know by the combination of Theorem~\ref{minions} and Lemma~\ref{construct_reduce} that if there exists a minion homomorphism from $\pol(\fa,\fb)$ to $\pol(\fc,\fd)$ then $(\fc,\fd)$ finitely reduces to $(\fa,\fb)$, and thus by Lemma~\ref{compact} $K_{(\fa,\fb)}$ implies $K_{(\fc,\fd)}$ over $\zf$. When trying to generalize this implication for weak minion homomorphisms we run into several challenges. The first one is that at the moment there is no known corresponding description of weak minion homomorphisms on the level of relational structures (similar to pp-constructions). For this reason, we have to work with the minions directly. Fortunately, the complexity reduction presented in Theorem B2 in~\cite{barto2021algebraic} can easily be adapted to also give a finite reduction. Another slight technical issue is that in these constructions it is not clear how to avoid choice entirely. However, as we will see, we can make our proofs work assuming $\acf$ which will be enough in our concrete applications using Corollary~\ref{cor:ac}.

\begin{theorem}[$\zf+\acf$]\label{dr_reduction}
	Let us assume that there exists a weak minion homomorphism from $\pol(\fa,\fb)$ to $\pol(\fc,\fd)$. Then $(\fc,\fd)$ finitely reduces to $(\fa,\fb)$.
\end{theorem}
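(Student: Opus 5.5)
We follow the reduction of Barto–Kozik (Theorem B2 in~\cite{barto2021algebraic}, and~\cite{barto2022combinatorial}), adapting it to infinite instances. The reduction goes through the minion-labelled instances (``label cover'' over free structures). Given an instance $\ii$ similar to $\fc$, we first pass to the standard minor-condition reformulation. Each element $v\in I$ becomes a variable of arity $|C|$. Each tuple $(v_1,\dots,v_m)\in R^{\ii}$ becomes a constraint variable of arity $|R^{\fc}|$, linked to the $v_i$ by the coordinate-projection minor maps. We then replace every variable $x$ of arity $n$ by a copy of the power $\fa^{n}$. For each minor identity $x\xrightarrow{\pi}y$ we glue along $\pi$, in the sense that a map on the copy of $y$ is forced to be the $\pi$-minor of the map on the copy of $x$ (implemented by a pp-definable equality gadget, as in the proof of Theorem~\ref{minions}). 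Call the resulting structure $\Gamma(\ii)$. This construction is syntactic and uses no choice. A homomorphism $\Gamma(\ii)\to\fb$ then corresponds exactly to an assignment of polymorphisms $x\mapsto t_x\in\pol(\fa,\fb)$ respecting all minors. Likewise, $\ii\to\fc$ gives a projection-valued assignment, so $\Gamma(\ii)\to\fa$ by assigning projections. This verifies item~\ref{it:def1} of Definition~\ref{def:fin}. Item~\ref{it:def3} follows from Lemma~\ref{lem:fin}, since every finite substructure of $\Gamma(\ii)$ lies inside $\Gamma(\ff)$ for a finite $\ff\subseteq\ii$ covering the finitely many variables involved.

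The real content is item~\ref{it:def2}. Suppose $\Gamma(\ii)\to\fb$, yielding a minor-respecting assignment $t_x\in\pol(\fa,\fb)$. Apply the $(d,r)$-minion homomorphism $\xi$ to obtain nonempty finite sets $\xi(t_x)\subseteq\pol(\fc,\fd)$ of size at most $d$. For Barto–Kozik the instance is not $\ii$ itself but a ``layered'' version. Accordingly, we build $\Gamma$ from an $r$-layered label cover: $r+1$ layers of copies of the variables, with the minor maps composed across layers. Any path through the layers is then a chain of minors $t_0\to\dots\to t_r$, and condition (iii) of Definition~\ref{def:min} guarantees some $i<j$ and $g\in\xi(t_i)$, $h\in\xi(t_j)$ with $g\xrightarrow{\pi_{i,j}}h$. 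In the finite setting, one then picks a layer pair and a uniformly random element of each $\xi$-set to satisfy a $1/(d^2r^2)$ fraction of the constraints. For compactness we need a deterministic, fully satisfying choice instead.

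So the intended step is the following. For each original variable we choose an element from a finite, nonempty set of candidate labels. These candidates are the elements of $\xi(t_x)$ composed with a choice of layer; each such set has size at most $d(r+1)$ and is definable from the given homomorphism. This is exactly where $\acf$ enters: it provides a choice function on the family of these finite sets simultaneously for all variables. The candidate sets must be chosen so that every choice is consistent. We would achieve this by first using the layering with $r$ large relative to $d$, and then refining each set to the labels $g$ that are ``compatible along all chains''. That every resulting selection then yields $\ii\to\fd$ follows from the fact that a polymorphism of $(\fc,\fd)$ assigned at constraint variables, consistent with minors, gives a homomorphism to $\fd$.

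The main obstacle is this last step. We must arrange the construction of $\Gamma$ so that the existence of a compatible pair guaranteed by (iii) along every chain can be converted, using only choices from finite sets, into a globally consistent minor-respecting selection. The first strategy to try is a pigeonhole argument over layers. For each variable, consider the sequence of sets $\xi(t)$ along the layers. Since the sets have size at most $d$, among $r\gg d$ layers there should be a canonically definable pair, for instance the least $i<j$ in a fixed ordering of layer indices, where compatible elements exist. The selection is then restricted to these canonical layer pairs, and $\acf$ is applied only to the residual finite ambiguity inside the sets $\xi(t_x)$.
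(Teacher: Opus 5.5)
Your proposal has a genuine gap, and it is exactly the step you flag as the main obstacle.

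\textbf{Why your $\Gamma$ and your derandomization do not work.} The $\Gamma$ you actually define is the standard reduction that works for genuine minion homomorphisms: element variables of arity $|C|$, constraint variables of arity $|R^{\fc}|$, and copies of powers of $\fa$ glued along minors. It only forces minor chains of length one (constraint variable to element variable). Padding such a chain with identity minors makes condition (iii) of Definition~\ref{def:min} trivially satisfied by $g=h$. So for $r\geq 2$ the weak homomorphism gives you no information about this instance.

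The ``$r$-layered label cover'' that is supposed to fix this is never specified. The finite-setting argument you allude to, with random choices satisfying a $1/(d^2r^2)$ fraction, only proves soundness of a gap problem whose hardness is imported from elsewhere. It does not turn an arbitrary infinite $\ii$ into an instance whose relaxed solvability forces $\ii\to\fd$, and that is what item (ii) requires.

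The suggested derandomization does not close this gap. The indices $i<j$ and the elements $g,h$ supplied by (iii) depend on the whole chain, and a variable lies on many chains that make incompatible demands. Neither a ``least'' layer pair nor the set of labels ``compatible along all chains'' is guaranteed to give nonempty sets whose arbitrary selections fit together. $\acf$ can only select among options; it cannot create global consistency. Pigeonholing over $r\gg d$ layers just restates (iii) chain by chain.

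\textbf{What the paper does instead.} The paper supplies both missing ingredients.
\begin{enumerate}
\item \emph{The layered instance.} The layers are the $k_i$-element subsets $U$ of $V=\dom(\ii)$, for suitable $k_0\geq\dots\geq k_r$. The labels at $U$ form the set $D_U$ of homomorphisms $\ii|_U\to\fc$, enumerated by bijections $\sigma_U\colon D_U\to|D_U|$ chosen using $\acf$. For $W\subseteq U$, the pair $(U,W)$ is put into the partial-function relation of a fixed finite structure $\fs$ that encodes restriction $D_U\to D_W$. The reduction then goes through $(\fs,\free_{\mathcal{M}}(\fs))$ via Lemma~\ref{free_reduction}.
\item \emph{Consistent PASes.} A homomorphism $\Gamma(\ii)\to\free_{\mathcal{M}}(\fs)$ yields genuine minor chains $t(U_0)\to\dots\to t(U_r)$ along every descending chain $U_0\supseteq\dots\supseteq U_r$. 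The map $\xi$ converts these into a consistent sequence of PASes $\mathcal{I}_i(U)=\{Z_U(g): g\in\xi(t(U))\}$ of value at most $d$.
\item \emph{The decisive step.} This is the Barto--Kozik combinatorial gap theorem (Theorem~\ref{pas}), reproved in $\zf+\acf$ as Theorem~\ref{pas2} by an induction on $r$ and the values, using obstacles and admissible functions. For large enough arities some $\mathcal{I}_i$ has an $m$-solution, and such a solution is a homomorphism $\ii\to\fd$.
\end{enumerate}
This Ramsey-type theorem, not a pigeonhole or canonical-selection argument, is what turns the chain-wise guarantee into a global solution. It is also where $\acf$ is really needed, beyond choosing the $\sigma_U$.
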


	Before proving Theorem~\ref{dr_reduction}, we discuss some of its most important consequences.

	The following result, although not explicitly stated, is included in the proof of Theorem 12 in~\cite{nakajima2025complexity}.

\begin{theorem}\label{wrochna}
	For all $c\geq 2$ there exists a weak minion homomorphism from $\pol(\hh_2,\hh_c)$ to $\proj$. 
\end{theorem}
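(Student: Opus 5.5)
The plan is to build $\xi$ from a canonical, uniformly bounded family of ``essential'' coordinates attached to each polymorphism, following the argument implicit in the proof of Theorem 12 in~\cite{nakajima2025complexity}. Write an $n$-ary $f\in\pol(\hh_2,\hh_c)$ as a map $\prescript{n\!}{}{2}\to c$, and identify $X\subseteq n$ with its characteristic tuple $\chi_X$.

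\textbf{Colouring of subsets.} Put $\kappa_f(X)\coloneqq f(\chi_X)$. This is a colouring of $\power(n)$ with $c$ colours. Whenever $X,Y,Z$ partition $n$, the triple $(\chi_X,\chi_Y,\chi_Z)$ is not-all-equal in every coordinate. The same holds for the complements. Hence
\begin{itemize}
\item no partition of $n$ into three parts is $\kappa_f$-monochromatic, and
\item no triple of complements of such parts is $\kappa_f$-monochromatic.
\end{itemize}
A minor $f\xrightarrow{\pi}g$ satisfies $\kappa_g(Y)=\kappa_f(\pi^{-1}(Y))$. Every combinatorial statement about $\kappa_f$ therefore transfers along minors by taking preimages.

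\textbf{Core lemma and definition of $\xi$.} The key lemma I would prove is a Ramsey/pigeonhole statement. There are constants $K=K(c)$ and $d=d(c)$ such that for every $f$ the family
\[
\mathcal{S}_f\coloneqq\{X\subseteq n: |X|\le K \text{ and } X \text{ is inclusion-minimal with } \kappa_f(X)\ne\kappa_f(\emptyset)\}
\]
(or a similar ``small witnessing sets'' family) is nonempty, and the union of its members has size at most $d$.

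For nonemptiness, take a partition of $n$ into many blocks and colour the unions of blocks. A large family of pairwise disjoint sets all coloured $\kappa_f(\emptyset)$, together with their complements, would create a monochromatic partition triple, using pigeonhole over the $c$ colours on unions. Iterating this forces small witnessing sets.

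For the bound on the union, use a sunflower-type argument. Too many minimal small witnessing sets would contain many pairwise disjoint ones, and we would complete them to a forbidden partition. We then set
\[
\xi(f)\coloneqq\{\pi_i^n: i\in \textstyle\bigcup\mathcal{S}_f\},
\]
so that $|\xi(f)|\le d$.

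\textbf{Chain condition.} Let $t_0\to t_1\to\dots\to t_r$ be a chain of minors. If $Y\in\mathcal{S}_{t_j}$, then $\pi_{i,j}^{-1}(Y)$ witnesses a colour change for $t_i$, so it contains some member of $\mathcal{S}_{t_i}$. This member meets $\pi_{i,j}^{-1}(Y)$, so some essential coordinate $a$ of $t_i$ is mapped by $\pi_{i,j}$ into the essential coordinates of $t_j$. Then $\pi^{n}_a\xrightarrow{\pi_{i,j}}\pi^{m}_{\pi_{i,j}(a)}$, which is exactly what is required.

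The subtlety is that minimality and the size bound need not be preserved. I would handle this by pigeonhole on $r$. Choose $r$ larger than a function of $d$ and $K$. Along the chain, the images of the at most $d$ essential coordinates of the $t_i$ give a bounded-size configuration, so some pair $i<j$ must have the compatible behaviour. This is where the $(d,r)$ slack in Definition~\ref{def:min} is used.

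\textbf{Main obstacle.} The hard step is the core combinatorial lemma, namely the uniform bound on essential coordinates depending only on $c$. I would first attempt it via the Dinur--Regev--Smyth style ``no monochromatic partition'' Ramsey argument on the Boolean lattice restricted to a coarse partition.
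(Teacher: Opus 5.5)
The paper does not prove this statement itself; it imports it from the proof of Theorem~12 in \cite{nakajima2025complexity}. Your proposal has a genuine gap: the core lemma is false in both of its halves, already for $c=3$.

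\emph{Unbounded union.} For $n\geq 1$ define $f\colon \prescript{n\!}{}{2}\to 3$ by $f(x)=0$ if $x$ is the all-zero tuple, $f(x)=1$ if $x_0=1$, and $f(x)=2$ otherwise. Take coordinatewise not-all-equal $x,y,z$. In each colour, $f(x)=f(y)=f(z)$ would force the column $(x_0,y_0,z_0)$ to be constant. Hence $f\in\pol(\hh_2,\hh_3)\subseteq\pol(\hh_2,\hh_c)$ for every $c\geq 3$. Since $\kappa_f(\emptyset)=0\neq\kappa_f(\{i\})$ for all $i$, $\mathcal{S}_f$ consists of all $n$ singletons, and so $|\xi(f)|=n$. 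These are $n$ pairwise disjoint witnessing sets, yet no monochromatic partition arises, so your sunflower step cannot succeed.

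\emph{Emptiness.} For $n>3K$, write $X$ for the support of $x$. Let $g(x)=0$ if $|X|<n/3$, $g(x)=1$ if $|X|\geq n/3$ and $0\in X$, and $g(x)=2$ otherwise. Three sets of size $<n/3$ cannot cover $n$. The sets of colour $1$ all contain $0$, and those of colour $2$ all miss $0$. So again $g\in\pol(\hh_2,\hh_3)$. Every $X$ with $|X|\leq K$ has $\kappa_g(X)=\kappa_g(\emptyset)$, hence $\mathcal{S}_g=\emptyset=\xi(g)$. The constant chain $g\xrightarrow{\id}g\xrightarrow{\id}\cdots\xrightarrow{\id}g$ then violates condition (iii) of Definition~\ref{def:min} for every $r$. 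Your Ramsey step fails here for a concrete reason. Many disjoint small sets of colour $\kappa_g(\emptyset)$ can never be completed to a partition, since three of them never cover $n$. Their complements simply receive other colours.

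If you drop the size bound $K$, your own preimage argument already gives condition (iii) with $r=1$. The family is then nonempty, because every polymorphism satisfies $t(x)\neq t(\neg x)$. So the entire content of your approach sits in the uniform bound on $\bigcup\mathcal{S}_f$, which is exactly the part that fails. The ``pigeonhole on $r$'' sentence gives no mechanism forcing $\pi_{i,j}$ to hit selected coordinates.

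In both examples the only sensible coordinate to select is $0$, and it is invisible from small sets near $\emptyset$. It only shows up in how the colour classes away from $\emptyset$ are separated: a star and a co-star at $0$. Measuring colour changes from the all-ones tuple instead does not help. The map $x\mapsto\neg x$ preserves coordinatewise not-all-equal triples, so $x\mapsto(f(x),f(\neg x))$ is a polymorphism of $(\hh_2,\hh_9)$. It has unboundedly many minimal witnesses from both ends.

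What is missing is a genuinely different rule for selecting a bounded set of coordinates that is compatible along chains of minors. That combinatorial lemma is the real content of the argument the paper cites, and your proposal does not supply it.
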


	Combining this with Theorem~\ref{dr_reduction} we obtain the following.
	
\begin{theorem}[$\zf$]\label{thm:main}
	Let $(\fa,\fb)$ be a finite promise template which does not have an Ol\v{s}\'{a}k polymorphism. Then $K_{(\fa,\fb)}$ is equivalent to $\ufl$.
\end{theorem}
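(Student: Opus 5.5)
One direction is immediate. The ultrafilter lemma implies the compactness theorem of propositional logic, and that theorem gives $K_{\fa}$. Since $\fa\rightarrow\fb$, any homomorphism $\ii\rightarrow\fa$ composes to a homomorphism $\ii\rightarrow\fb$, so $\ufl$ implies $K_{(\fa,\fb)}$. The real work is the converse: we assume $K_{(\fa,\fb)}$ and derive $\ufl$. The plan is to obtain $\acf$ first and then use the weak minion homomorphism to $\proj$ provided by Theorem~\ref{wrochna}.

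\textbf{Step 1: reduce to $(\hh_2,\hh_d)$.} Since $(\fa,\fb)$ has no Ol\v{s}\'{a}k polymorphism, item~\ref{it:polsak} of Theorem~\ref{graph_promise} shows that $(\fa,\fb)$ pp-constructs $(\hh_2,\hh_d)$ for some $d\geq 2$. By Corollary~\ref{construct_reduce2}, $K_{(\fa,\fb)}$ then implies $K_{(\hh_2,\hh_d)}$.

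\textbf{Step 2: obtain $\acf$.} If there were a cyclic polymorphism, Proposition~\ref{cycl} would turn it into an area-rare one and then into an Ol\v{s}\'{a}k one. So $(\hh_2,\hh_d)$ has no cyclic polymorphism; one can also apply this directly to $(\fa,\fb)$. Theorem~\ref{cor:finite}, or equivalently Corollary~\ref{cor:ac}, then yields $\acf$ from $K_{(\hh_2,\hh_d)}$.

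\textbf{Step 3: reach $K_{K_3}$.} By Theorem~\ref{wrochna} there is a weak minion homomorphism $\xi\colon\pol(\hh_2,\hh_d)\rightarrow\proj$. Corollary~\ref{hardness} gives a minion homomorphism $\pol(K_3)\rightarrow\proj$, but here we need a map in the other direction, from $\proj$ into $\pol(K_3)$. Since $\proj$ embeds into every polymorphism minion via projections, the map sending each $n$-ary projection on $2$ to the $n$-ary projection on $3$ with the same index is a minion homomorphism $\proj\rightarrow\pol(K_3,K_3)$. Composing it with $\xi$ (taking images elementwise) gives a weak minion homomorphism $\pol(\hh_2,\hh_d)\rightarrow\pol(K_3,K_3)$: the bound $d$ on the size of images and the chain condition with the same $r$ are clearly preserved. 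We are working in $\zf$ together with $\acf$ from Step 2, so Theorem~\ref{dr_reduction} applies and shows that $(K_3,K_3)$ finitely reduces to $(\hh_2,\hh_d)$. Lemma~\ref{compact} then gives $K_{K_3}$. L\"auchli's theorem states that $K_{K_3}$ is equivalent to $\ufl$, which completes the argument.

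\textbf{Main obstacle.} The delicate point is the order in which choice is used. Theorem~\ref{dr_reduction} is proved only in $\zf+\acf$, so it cannot be invoked outright. This is why $\acf$ must first be extracted from the hypothesis $K_{(\fa,\fb)}$ itself, which is exactly what Step 2 does, before the weak reduction is applied. Everything else is bookkeeping, namely checking that composing a weak minion homomorphism with a genuine one is again weak.
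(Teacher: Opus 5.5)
Your proposal is correct and follows the paper's proof step by step: you pp-construct $(\hh_2,\hh_d)$ via Theorem~\ref{graph_promise}, extract $\acf$ from Corollary~\ref{cor:ac}, and then combine Theorems~\ref{wrochna} and~\ref{dr_reduction} with Lemma~\ref{compact} and L\"auchli's theorem. Your explicit composition of the weak minion homomorphism into $\proj$ with the projection-preserving minion homomorphism $\proj\rightarrow\pol(K_3,K_3)$ fills in a detail the paper leaves implicit; the only blemish is that you reuse the letter $d$ for both the template $\hh_d$ and the image-size bound of the weak minion homomorphism.
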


\begin{proof}
	We have already seen that $\ufl$ implies all compactness principles over $\zf$.
	
	For the other direction let $(\fa,\fb)$ be a promise template without an Ol\v{s}\'{a}k polymorphism and assume that $K_{(\fa,\fb)}$ holds.  By item (6) of Theorem~\ref{graph_promise} we know that $(\fa,\fb)$ pp-constructs $(\hh_2,\hh_c)$ for some $c\in \omega,c\geq 2$, and thus by Corollary~\ref{construct_reduce2} we know that $K_{(\hh_2,\hh_c)}$ holds. Then by Corollary~\ref{cor:ac} we know that $\acf$ holds and thus we can apply Theorem~\ref{dr_reduction}. By Theorems~\ref{dr_reduction} and~\ref{wrochna} we can conclude that $(K_3,K_3)$ finitely reduces to $(\hh_2,\hh_c)$ which means that $K_{K_3}$, and thus also $\ufl$ holds.
\end{proof}

\begin{theorem}\label{colours}
	Any of the following statements are equivalent to $\ufl$ over $\zf$.
\begin{enumerate}[(1)]
\item\label{it:maink} $K_{(K_k,K_c)}$ for any $k,c\in \omega$ with $3\leq k\leq c\leq 2k-1$.
\item\label{it:mainh} $K_{(\hh_2,\hh_c)}$ for any $c\in \omega, c\geq 2$.
\end{enumerate}
\end{theorem}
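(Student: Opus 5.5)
The plan is to derive both items directly from Theorem~\ref{thm:main} together with the results on Ol\v{s}\'{a}k polymorphisms collected in Theorem~\ref{graph_promise}, plus Corollary~\ref{promise_compactness} for the omniexpressive range. In every case the implication from $\ufl$ to the compactness principle is free: $\ufl$ implies the compactness theorem of propositional logic, hence every $K_{(\fa,\fb)}$. So the task is only to show that each listed principle implies $\ufl$.

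For item~(\ref{it:mainh}), item~(\ref{it:holsak}) of Theorem~\ref{graph_promise} says that $(\hh_k,\hh_c)$ has no Ol\v{s}\'{a}k polymorphism for $2\leq k\leq c$. In particular this holds for $(\hh_2,\hh_c)$ with $c\geq 2$, and Theorem~\ref{thm:main} then gives $K_{(\hh_2,\hh_c)}\leftrightarrow\ufl$ immediately. The same argument also covers $K_{(\hh_k,\hh_c)}$ for general $2\leq k\leq c$, as claimed in Theorem~\ref{thm:int1}.

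For item~(\ref{it:maink}) I split into two ranges of $c$.

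If $3\leq k\leq c\leq 2k-2$, the template $(K_k,K_c)$ is omniexpressive by item~(\ref{it:komni}) of Theorem~\ref{graph_promise}, and Corollary~\ref{promise_compactness} already gives the equivalence.

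If $c=2k-1$ with $k\geq 3$, item~(\ref{it:kolsak}) of Theorem~\ref{graph_promise} says $(K_k,K_c)$ has an Ol\v{s}\'{a}k polymorphism iff $c\geq 2k$ or $k=2$. Neither condition holds, so $(K_k,K_{2k-1})$ has no Ol\v{s}\'{a}k polymorphism, and Theorem~\ref{thm:main} applies. In fact the same reasoning covers the whole range $3\leq k\leq c\leq 2k-1$ at once, since item~(\ref{it:kolsak}) excludes Ol\v{s}\'{a}k polymorphisms for every $c\leq 2k-1$ when $k\geq 3$, so the case split is optional.

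I do not expect a genuine obstacle here: all the real work sits in Theorem~\ref{thm:main}, that is, in Theorem~\ref{dr_reduction}, Theorem~\ref{wrochna} and Corollary~\ref{cor:ac}. The only point to check is that the hypotheses of Theorem~\ref{graph_promise} are met, namely $2\leq k\leq c$ and $k\neq 2$ in item~(\ref{it:kolsak}), which the stated ranges guarantee.
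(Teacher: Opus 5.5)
Your proposal is correct and matches the paper's proof, which consists only of combining Theorem~\ref{thm:main} with the Ol\v{s}\'{a}k-polymorphism facts in items~(\ref{it:kolsak}) and~(\ref{it:holsak}) of Theorem~\ref{graph_promise}. Your separate treatment of the omniexpressive range via Corollary~\ref{promise_compactness} is harmless but, as you note yourself, not needed.
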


\begin{proof}
	Follows from Theorems~\ref{thm:main} and~\ref{graph_promise}.
\end{proof}

	We finally mention that by a relatively easy set theoretical argument one can also remove the uniform upper bounds in item~\ref{it:mainh} of Theorem~\ref{colours}.

	

\begin{definition}	
	For a $k\in \omega$ we say that a 3-uniform hypergraph $\fg$ is \emph{$k$-colourable} if it has a homomorphism to $\hh_k$, and \emph{finitely $k$-colourable} if every finite subhypergraph of $\fg$ has a homomorphism to $\hh_k$.
\end{definition}
	
\begin{theorem}[$\zf$]\label{colours2}
	Let us assume that every finitely 2-colourable 3-uniform hypergraph can be coloured by finitely many colours. Then $\ufl$ holds.
\end{theorem}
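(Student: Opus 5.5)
The plan is to reduce Theorem~\ref{colours2} to Theorem~\ref{colours} item~\ref{it:mainh}, via a gluing argument. We assume the hypothesis and aim to derive $K_{(\hh_2,\hh_c)}$ for some fixed $c$. Theorem~\ref{colours} then gives $\ufl$.

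\textbf{Step 1: choice for finite sets.} We first obtain $\acf$ from the hypothesis. The hypothesis does not directly give any single $K_{(\hh_2,\hh_c)}$. However, the proof of Lemma~\ref{thm:finite} can be run with a colouring into $\hh_m$ for an arbitrary finite $m$, since $(\hh_2,\hh_m)$ has no cyclic polymorphism for every $m\geq 2$. Concretely, given a family $X$ of $p$-element sets, build $\fm$ as the disjoint union of the copies of $\hh_2^p$ indexed by $X$. This $\fm$ is finitely $2$-colourable, so by the hypothesis it has a homomorphism to some $\hh_m$. The ordering argument then works exactly as before, with $B=m$ finite. We therefore get $\kw(p)$ for every prime $p$, and hence $\acf$ by Corollary~\ref{kw_prime}.

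\textbf{Step 2: a uniform bound by gluing.} Suppose, for contradiction, that for every $c$ there is a finitely $2$-colourable hypergraph $\fg_c$ with no homomorphism to $\hh_c$. Choosing such witnesses uniformly in $c$ needs countable choice, which is not available, so we avoid the choice. Instead, for each $c$ let $\fg_c$ be the disjoint union of \emph{all} finitely $2$-colourable hypergraphs on a suitable fixed set, say subsets of $V_\kappa$ for a large enough rank, which defines it without choice. Then let $\fg$ be the disjoint union $\bigsqcup_c \fg_c$, or simply one disjoint union of all finitely $2$-colourable hypergraphs on some set $S$. This $\fg$ is finitely $2$-colourable, since every finite part of a disjoint union lies in finitely many components. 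By the hypothesis $\fg$ maps to some $\hh_m$, and restricting this map shows that every finitely $2$-colourable hypergraph on $S$ is $m$-colourable.

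\textbf{Step 3: transfer to all hypergraphs.} It remains to handle an arbitrary hypergraph $\ii$, which need not live on $S$. If $K_{(\hh_2,\hh_m)}$ fails, the failure is witnessed by some $\ii$; it suffices to consider isomorphic copies, and these need choice-free coding. This is the main obstacle. The approach is to prove, by contradiction, that $K_{(\hh_2,\hh_m)}$ holds for some $m$. If it failed for every $m$, then the class of counterexamples is nonempty for each $m$. By Scott's trick, taking counterexamples of minimal rank gives a \emph{set} $Y_m$ of counterexamples for each $m$. Let $\fg$ be the disjoint union of all members of all $Y_m$, tagged by $m$ and by the member itself, which is definable without choice. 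Then $\fg$ is finitely $2$-colourable, and so it maps to some $\hh_{m_0}$. Restricting to any member of $Y_{m_0}$ gives a $m_0$-colouring of that member, a contradiction. This yields $K_{(\hh_2,\hh_m)}$ for some $m$, and Theorem~\ref{colours} finishes the proof. This last argument makes Steps 1 and 2 unnecessary, so the final proof would consist of Step 3 alone.
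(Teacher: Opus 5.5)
Your final argument (Step 3 alone) is correct and is essentially the paper's proof. The paper likewise takes minimal-rank counterexamples $\fg_c$ for each $c$ and collects them in $V(\alpha+1)$ with $\alpha=\sup_c\alpha_c$; it then forms the disjoint union of all finitely $2$-colourable hypergraphs in that set, derives a contradiction, and applies Theorem~\ref{colours}, so your sets $Y_m$ play exactly the role of $V(\alpha+1)$. As you note yourself, Steps 1 and 2 are superfluous; Step 1 is valid but unnecessary, since $\acf$ is already obtained inside the proof of Theorem~\ref{thm:main}.
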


\begin{proof}
	We show that the assumption of the lemma implies $K_{(\hh_2,\hh_c)}$ for some $c\geq 2$. Suppose that this is not the case. Then for all $c\geq 2$ there exists some 3-uniform hypergraph $\fg_c$ which is finitely $2$-colourable but not $c$-colourable. Let $\alpha_c$ be the minimal rank (in the cumulative hierarchy) of such a hypergraph, and let $\alpha\coloneqq \sup_c\alpha_c$. Then $S\coloneqq V(\alpha+1)$ is a set containing some hypergraph $\fg_c$ as above for all $c\in \omega$. Let $\fg$ be the disjoint union of all finitely 2-colourable hypergraphs in $S$. Then by our construction $\fg$ cannot be coloured by finitely many colours, a contradiction.
\end{proof}

\begin{remark}
	Note that in our proof above we used the Axiom of Foundation, but this can be avoided by the following argument. Let us assume that $\ufl$ is false, or equivalently $K_{K_3}$ is false, and let $G$ be a graph which is not 3-colourable but not finitely 3-colourable. Let $S$ be any set such that $G\in S$, and $S$ is closed under the operators $\bigcup$ and $\power$. The existence of such a set follows from basic set theoretical arguments. Then one can easily check that in all finite reductions provided by the our proof of Theorem~\ref{dr_reduction} in Subsection~\ref{sect:main} we never leave the set $S$ (this relies on the specific details of our proof). From this we can conclude that for all $c\in \omega$ there exists some hypergraph $\fg_c\in S$ which is finitely 2-colourable but not $c$-colourable. Then by the same construction as in the proof of Theorem~\ref{colours} we can find a hypergraph $\fg$ (not necessarily in $S$) which is finitely 2-colourable but cannot be coloured by finitely many colours.
\end{remark}


\subsection{The proof of Theorem~\ref{dr_reduction}}\label{sect:main}

	We first recall some technical notions and results from~\cite{barto2021algebraic} which we will need in our reduction.
	
\begin{definition}
	For a set of variables $V$ and some set $A$, a \emph{partial assignment system (PAS) of arity $k$}, or \emph{$k$-PAS}, over $A$, is a map $\mathcal{I}$ defined on ${V \choose k}$ such that for each $U\in {V\choose k}$ we have $\mathcal{I}(\mathcal{U})\subseteq \prescript{U\!}{}{A}$. The value of a $k$-PAS $\mathcal{I}$, denoted by $\val(\mathcal{I})$, is the maximal size of $\mathcal{I}(\mathcal{U}): U\in {V \choose k}$.
	
	A map $f: V\rightarrow A$ is an \emph{$m$-solution} of a $k$-PAS $\mathcal{I}$, if every $U\in {V\choose m}$ can be extended to some $W\in {V\choose k}$ such that $f|_U\in \mathcal{I}(W)|_U$.
\end{definition}

\begin{definition}
	Let $(\mathcal{I}_0,\dots,\mathcal{I}_r)$ be a sequence of PASes over $A$ such that the arity of $\mathcal{I}_i$ is $k_i$. We call such a sequence \emph{consistent} if
\begin{itemize}
\item $k_0\geq k_1\geq \dots \geq k_r$, and
\item for all $V\supseteq U_0\supseteq U_1\supseteq \dots \supseteq U_r$ with $|U_i|=k_i$ there exist $0\leq i<j\leq r$ such that $\mathcal{I}_j\cap \mathcal{I}_i|_{U_j}\neq \emptyset$.
\end{itemize}
\end{definition}

	Next we state the main result of~\cite{barto2021algebraic} (Theorem 2.1) which will be used in our reduction as well.
	
\begin{theorem}[$\zfc$]\label{pas}
	For all $n,m,r,d\in \omega$ there exist $k_0,\dots,k_r\in \omega$ such that for any consistent sequence $(\mathcal{I}_0,\dots,\mathcal{I}_r)$ of PASes of arities $k_0,\dots,k_r$ over $n$ and with values at most $d$ there exists an $i\in \{0,1,\dots,r\}$ such that $\mathcal{I}_i$ has an $m$-solution. 
\end{theorem}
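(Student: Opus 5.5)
The statement is the main combinatorial theorem of Barto and Kozik, quoted here in $\zfc$. The plan is a Ramsey-theoretic argument: first reduce to finite $V$, then make the PASes homogeneous, and finally read off an $m$-solution from the consistency condition.

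\emph{Reduction to finite $V$.} Since we work in $\zfc$ and $m$-solvability is a condition on $m$-element subsets, the compactness theorem reduces the infinite case to the finite case. Restrict each $\mathcal{I}_i$ to ${V'\choose k_i}$ for finite $V'\subseteq V$. Consistency is inherited by these restrictions, and values do not increase. So if every finite restriction of some $\mathcal{I}_i$ has an $m$-solution, then by pigeonhole over the finitely many indices $i$ and compactness, some fixed $i$ works for $V$ itself. Hence we may assume $V$ is finite and linearly ordered, of size at least $k_0$; smaller $V$ makes the statement vacuous.

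\emph{Choice of the arities.} The $k_i$ are chosen backwards. First take $k_r$ large compared with $m$, $n$ and $d$. Then let each $k_{i-1}$ be a hypergraph Ramsey number large enough to homogenise $k_i$-subsets of a $k_{i-1}$-set with respect to a bounded colouring.

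The colouring records the \emph{order-pattern} of $\mathcal{I}_i(U)$. Identify $U$ with $k_i$ via the order of $V$; then $\mathcal{I}_i(U)$ becomes a subset of $\prescript{k_i}{}{n}$ of size at most $d$. Also record, for each $j$ and each order-embedded $k_j$-subset, the pattern of the restriction and of the intersections $\mathcal{I}_j\cap\mathcal{I}_i|_{U_j}$. There are only finitely many such patterns, bounded in terms of $n$, $d$ and $k_i$. Iterating Ramsey's theorem gives a nested chain $U_0\supseteq\dots\supseteq U_r$ together with large sets $S_i\subseteq U_{i-1}$ on which all these patterns are independent of the particular subset chosen.

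\emph{Extracting a solution.} By consistency applied to such homogeneous chains, there are $i<j$ such that for all order-compatible choices an assignment $g$ in $\mathcal{I}_j(U_j)$ lies in the restriction $\mathcal{I}_i(U_i)|_{U_j}$. By homogeneity $g$ is given by a fixed pattern $\tau\in\prescript{k_j}{}{n}$. Among the at most $d$ patterns, the one chosen should be ``stable under taking sub-order-types''. I would then define $f\colon V\to n$ from $\tau$ by a pigeonhole argument. Since $k_j\gg m$, I would select a large block of positions on which $\tau$ behaves uniformly, e.g. constant on long intervals, and set $f$ accordingly on the homogeneous set. For each $m$-set $U$, its order type embeds into a $k_j$-set $W$ realising $\tau$, which witnesses $f|_U\in\mathcal{I}_j(W)|_U$.

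\emph{Main obstacle.} The hardest step is the last one. Homogeneity is only obtained on a large subset, not on all of $V$, and $f$ must be defined on all of $V$. Order-patterns $\tau$ that are not eventually constant (alternating, say) may not extend to a global $f$. To handle this I would use the freedom $i<j$: I would try downward induction on $r$, showing that if no extension works at level $j$, the failure propagates a consistent sequence of length $r-1$ with smaller value, contradicting the induction hypothesis. This is presumably why the $k_i$ must grow as a tower depending on $r$ and $d$.
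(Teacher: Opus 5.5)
\textbf{There is a genuine gap.} It sits exactly at the step you call the main obstacle, extracting an $m$-solution, and that step is the whole content of the theorem. Ramsey homogenisation only controls the patterns on a homogeneous set $H$ inside some $U_{i-1}$. An $m$-solution, by contrast, is a single $f\colon V\to n$ that must be witnessed on \emph{every} $m$-subset of $V$, including those meeting $V\setminus H$, and its values on overlapping $m$-sets must cohere. You give no rule for defining $f$ off $H$. Your step ``pick a block where $\tau$ is uniform and set $f$ accordingly'' also throws away exactly the information that is needed.

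A trivial example shows this. Fix $A\subseteq V$ and let $\mathcal{I}_i(U)=\{\mathbf{1}_A|_U\}$ for all $i$ and $U$. This is a consistent sequence of value $1$, and for $m\geq 1$ each $\mathcal{I}_i$ has $\mathbf{1}_A$ as its \emph{only} $m$-solution. However, a homogeneous set of size larger than the arity lies inside $A$ or inside $V\setminus A$. So the homogeneous pattern is constant and carries no information about $A$ outside $H$. What produces the solution here is gluing the singletons $\mathcal{I}_r(\{v\})$ along consistency, not homogeneity. Your closing suggestion (an induction that shortens the sequence or lowers a value) points in a sensible direction. But you give no mechanism for building the smaller consistent sequence, and that mechanism is the actual proof.

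The missing idea, used in the paper following Barto and Kozik, is an induction on $r+d_0+2\sum_{i\geq 1}d_i$. It is driven by the dichotomy between $\ell$-admissible partial assignments and $\ell$-obstacles.

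\emph{First case.} Suppose that for some $i\geq 1$ every $k_i'$-subset of $V$ carries an $\ell_i$-admissible assignment. Choosing one per set gives a value-$1$ PAS $\mathcal{I}'$ that is consistent with a refinement $\mathcal{I}''$ of $\mathcal{I}_i$ (Lemma~\ref{q}). The induction hypothesis for values $(d_i,1)$ then gives an $m$-solution of $\mathcal{I}''$, hence of $\mathcal{I}_i$.

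\emph{Second case.} Otherwise, pick for each $i\geq 1$ a set $X_i$ with no $\ell_i$-admissible assignment. Lemma~\ref{p} yields an $\ell_0$-obstacle $f$ for $\mathcal{I}_0$ on $X=\bigcup_i X_i$. One refines $\mathcal{I}_0$ by deleting the assignments extending $f$; the obstacle property guarantees something is deleted, so the value bound becomes $d_0-1$. Each $\mathcal{I}_i$ with $i\geq 1$ is refined using only sets $U$ on which no assignment restricts to $f|_{X_i}$. Non-admissibility makes such $U$ available, and this is precisely what keeps the new sequence consistent after the deletion.

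No Ramsey theory and no reduction to finite $V$ are needed. Your compactness reduction is harmless in $\zfc$, but avoiding it is what lets the paper obtain the statement in $\zf+\acf$ (Theorem~\ref{pas2}).
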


	Note that we stated Theorem~\ref{pas} as a $\zfc$ theorem. In the context of~\cite{barto2021algebraic} this distinction is irrelevant since only finite instances of PASes are considered, and clearly in this case the proof of Theorem~\ref{pas} goes through in $\zf$. Nevertheless, the proof itself also works for PASes of any size, however, in this case the reduction step in the proof seemingly needs to use some choice at some point. We show, however, that the original proof goes through in $\zf+\acf$ with a slight modification.

\begin{theorem}\label{pas2}
	Theorem~\ref{pas} holds in $\zf+\acf$.
\end{theorem}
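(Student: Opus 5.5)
The plan is to re-run the proof of Theorem~\ref{pas} from~\cite{barto2021algebraic} line by line for PASes over an arbitrary (possibly infinite) variable set $V$. I will check that every non-constructive step is a simultaneous choice from a family of finite sets of uniformly bounded size, and that the final $m$-solution is defined outright rather than obtained by a compactness argument. The constants $k_0,\dots,k_r$ will be exactly those of the original proof. They depend only on $n,m,r,d$ and are computed by finite combinatorics (Ramsey- and sunflower-type bounds), so their existence is a $\zf$ fact about natural numbers and needs no modification.

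\textbf{Step 1: locate the choices.} The original argument is an induction on $r$ (equivalently, on the length of the consistent sequence). Each step passes from the PASes $\mathcal{I}_i$ to refined PASes $\mathcal{I}'_i$ of smaller arity or smaller value. It does so by selecting, for each $U\in{V\choose k_i}$, some finite object attached to $U$: a subset $W\subseteq U$ of prescribed size, a restriction $\mathcal{I}_i(U)|_W$, or a partial assignment in $\mathcal{I}_i(U)$. All such candidate sets are finite of size bounded by a constant $N=N(n,d,k_0)$; for instance $|{U\choose \ell}|\le {k_0\choose \ell}$ and $|\power(\prescript{U\!}{}{n})|\le 2^{n^{k_0}}$. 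Wherever the original proof selects \emph{some} witness for every $U$, I will instead take the whole finite set $S_U$ of admissible witnesses. I then apply a choice function for the family $\{S_U: |S_U|=\ell\}$, which exists by $\ac(\ell)$, separately for each $\ell\le N$. Since there are finitely many values of $\ell$, the pieces glue into one choice function on all of $\{S_U\}$, and this uses only $\acf$. Whenever the selection can be made canonically, for example by taking unions or intersections of all witnesses, or using the fixed order on the domain $n$, I will do that instead and need no choice at all.

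\textbf{Step 2: the solution.} I will verify that at the base of the induction the $m$-solution is given by an explicit rule. In that case either some value $a\in n$ is forced on each variable by the refined PAS, or a value-$1$ PAS determines $f$ uniquely on each variable, with ties broken by the order on $n$. Such a rule defines $f:V\to n$ outright, without choosing among the $n^V$ functions and without invoking compactness. The inductive step must then transfer the solution back from $(\mathcal{I}'_i)$ to $(\mathcal{I}_i)$. This transfer is again done pointwise on $m$-subsets, using the chosen witnesses from Step 1, so it is $\zf+\acf$-definable.

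\textbf{Main obstacle.} The hard step is confirming that the original proof never implicitly well-orders $V$, or argues ``take a minimal counterexample $U$'' in a way that requires a global selection. The most suspicious point is the step turning local consistency on finite substructures into a global solution, which in the infinite setting would otherwise be a compactness principle equivalent to $\ufl$. I would handle this first. The plan is to show that the consistency condition of the sequence is used only locally, on chains $U_0\supseteq\dots\supseteq U_r$. Each chain is a finite configuration, and for each such configuration the relevant witnesses form a bounded finite set. The argument should therefore reduce entirely to the choice pattern of Step 1.
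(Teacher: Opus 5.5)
\textbf{There is a genuine gap in Step 1.} Your plan rests on the claim that every non-constructive selection in the original proof is a choice from a finite set of size bounded by some $N(n,d,k_0)$. All the examples you list go \emph{downward} from a large set $U$: subsets $W\subseteq U$, restrictions $\mathcal{I}_i(U)|_W$, elements of $\mathcal{I}_i(U)$. The selections that actually cause trouble go \emph{upward}. Each time a PAS is refined, the original argument fixes an extension map $\ex\colon {V\choose p}\to {V\choose k}$. It assigns to each small set $Y$ some $k$-element superset in $V$ with a prescribed property. In Proposition A.2 the superset must contain the admissibility witnesses $\nu(X)$ for $X\in{Y\choose k'}$. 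In Case~2 of the induction it must satisfy $W_i\sqsubset_i U_i$. For infinite $V$ the set of such supersets is infinite, so you cannot take ``the whole finite set $S_U$ of witnesses'' and apply $\ac(\ell)$. There is also no canonical substitute. The union of all witnesses is in general infinite, and an intersection of witnesses need not be a witness. Intersecting the restrictions $\mathcal{I}(U)|_Y$ over all witnesses may leave nothing, which breaks consistency. So the check you propose fails exactly at the central step, and your proposal offers no way around it.

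\textbf{The missing idea is to choose the value rather than the witness.} For each $Y$ the candidate restrictions $\mathcal{I}(U)|_Y$ lie in $\power(\prescript{Y\!}{}{n})$, so there are at most $2^{n^{|Y|}}$ of them. Hence $\acf$ lets you pick one as $\mathcal{J}(Y)$ for all $Y$ simultaneously. You then let $\ex(Y)$ be the (possibly infinite) \emph{set} of all witnesses realizing the chosen value, which needs no choice.

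This forces a reworked consistency proof for the refined sequence, because the extension maps are now set-valued. The pairs $\mathcal{J}_i,\ex_i$ have to be built backwards in $i$. The auxiliary maps $\alpha_i$ on ${Y\choose p_{i+1}}$ must land in $\ex_{i+1}$. Then, for a chain $Y_0\supseteq\dots\supseteq Y_r$, you can build $U_0\supseteq\dots\supseteq U_r$ with $U_i\in\ex_i(Y_i)$ by finitely many instantiations, and invoke the consistency of $(\mathcal{I}_0,\dots,\mathcal{I}_r)$.

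\textbf{Your ``main obstacle'' is misplaced.} The original proof has no compactness step. An $m$-solution is a local condition, and the base-case solution is simply the union of the singleton values of a value-$1$ PAS. An $m$-solution of a refinement is automatically an $m$-solution of the original PAS, so no transfer of witnesses is needed. The whole difficulty lies in the extension maps.
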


	The following two notions are essentially taken from~\cite{barto2021algebraic}.
	
\begin{definition}
	Let $\mathcal{I}$ be a $k$-PAS over $n$ with a variable set $V$, let $f$ be a partial map $V\rightarrow n$ with $|\dom(f)|\leq k$. Then for some $\ell\leq k$ we say that $f$ is
\begin{itemize}
\item \emph{$\ell$-obstacle (with respect to $\mathcal{I}$)} if for all $W\in {V\choose \ell}$ there exist some $U\in {V\choose k}$ and $g\in \mathcal{I}(U)$ such that $\dom(f)\cup W\subseteq U$ and $f\subseteq g$;
\item \emph{$\ell$-admissible (with respect to $\mathcal{I}$)} if there exists some $W\in {V\choose \ell}$ such that for all $U\in {V\choose k}$ with $\dom(f)\cup W\subseteq U$ there exists some $g\in \mathcal{I}(U)$ such that $f\subseteq g$.
\end{itemize}
\end{definition}

\begin{remark}
	Using the terminology of~\cite{barto2021algebraic} $f$ being an $\ell$-obstacle means that $(\dom(f),f)$ has the $\ell$-property $P$ and $f$ being $\ell$-admissible means that $(\dom(f),f)$ does not have the $\ell$-property $Q$.
\end{remark}

	The following lemma is a reformulation of Proposition A.1 in~\cite{barto2021algebraic} (the proof for this is not using any choice axioms).
	
\begin{lemma}[$\zf$]\label{p}
	Let $\mathcal{I}$ be a $k$-PAS over $n$ with a variable set $V$, and let $X$ be an at most $k$-element subset of $V$. If $k\geq n^{|X|}\ell+|X|$ then there exists some $\ell$-obstacle with $\dom(f)=X$.
\end{lemma}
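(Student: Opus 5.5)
The plan is a direct counting/pigeonhole argument by contradiction. There are only finitely many candidate maps: the partial maps $f$ with $\dom(f)=X$ are exactly the elements of $\prescript{X\!}{}{n}$, and there are $n^{|X|}$ of them. Suppose none of them is an $\ell$-obstacle. Then for each $f\in\prescript{X\!}{}{n}$ there is a witness $W_f\in{V\choose \ell}$ such that no $U\in{V\choose k}$ with $X\cup W_f\subseteq U$ has a $g\in\mathcal{I}(U)$ with $f\subseteq g$. Choosing one such $W_f$ for each $f$ only involves finitely many choices, so this is legitimate in $\zf$ and no choice principle is needed.

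Next, combine the witnesses. Put $W\coloneqq\bigcup_{f}W_f$, so that $|W|\leq n^{|X|}\ell$. Then $|X\cup W|\leq |X|+n^{|X|}\ell\leq k$ by the hypothesis on $k$. Hence we can take some $U\in{V\choose k}$ with $X\cup W\subseteq U$. This uses that $V$ has at least $k$ elements, which I take to be implicit in the notion of a $k$-PAS, where ${V\choose k}$ is the index set. Extending $X\cup W$ to $U$ means adding finitely many elements of $V$, and picking a specific $k$-element superset of a finite set requires no choice.

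Now derive the contradiction. Pick any $g\in\mathcal{I}(U)$, and set $f\coloneqq g|_X$, which is an element of $\prescript{X\!}{}{n}$. Since $X\cup W_f\subseteq X\cup W\subseteq U$ and $f\subseteq g\in\mathcal{I}(U)$, the set $U$ together with $g$ violates the defining property of the witness $W_f$. Hence some $f$ with $\dom(f)=X$ must be an $\ell$-obstacle.

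The only delicate point is the step that picks $g$: it needs $\mathcal{I}(U)\neq\emptyset$. I would verify that the PASes under consideration have nonempty values, as in the setting of~\cite{barto2021algebraic}; if some value could be empty, the lemma would need that as a standing assumption. Apart from this, the argument is routine bookkeeping.
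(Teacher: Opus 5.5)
Your argument is correct and is the standard pigeonhole proof. The paper gives no proof of its own here; it cites Proposition A.1 of \cite{barto2021algebraic} and remarks that no choice is used, and your points (only finitely many witnesses $W_f$ are picked, and $|V|\geq k$ is needed) are the right $\zf$ bookkeeping. Your caveat about $\mathcal{I}(U)\neq\emptyset$ is genuine: the paper's definition of a PAS allows empty values, and its proof of Lemma~\ref{pas3} produces such PASes (e.g.\ $\mathcal{J}_0$, which may then be fed to this lemma as $\mathcal{I}_0$ at the next induction step); without nonemptiness the statement fails, e.g.\ when every value is empty and $|V|\geq \ell$. So either assume nonempty values, or weaken the obstacle condition to ``$\mathcal{I}(U)=\emptyset$ or some $g\in\mathcal{I}(U)$ extends $f$''; your argument proves the weakened version for $n\geq 1$, and that version is all Case~2 needs for $\val(\mathcal{J}_0)\leq d_0-1$ and for consistency.
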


	We adapt the following notion from the proof of Theorem~\ref{pas} in~\cite{barto2021algebraic}.

\begin{definition}
	We say that an $\ell$-PAS $\mathcal{J}$ is a \emph{refinement} of a $k$-PAS $\mathcal{I}$, or $\mathcal{J}$ \emph{refines} $\mathcal{I}$, if $\ell\leq k$ and for all $U\in {V\choose \ell}$ there exists an $\tilde{U}\in {V\choose k}$ such that $\mathcal{J}(U)=\mathcal{I}(\tilde{U})|_U$.
\end{definition}

	It is clear from the definition that if $\mathcal{J}$ refines $\mathcal{I}$ then $\val(\mathcal{J})\leq \val(\mathcal{I})$.
	
	Note that, as opposed to the analogous notion introduced in~\cite{barto2021algebraic} we do not require that there exists an extension function $\ex: {V\choose \ell}\rightarrow {V\choose k}, U\mapsto \tilde{U}$ witnessing the refinement in the definition above since the existence of such a function cannot necessarily be established without any choice axioms. However, as we will see, the proof itself never requires the existence of the function $\ex$ itself, as we can define refinements by picking one of the possible restrictions to the smaller subsets. Since for a given $\ell$-element subset there are only finitely many restrictions such choices are possible to make under the assumption of $\acf$.
	
	Our next lemma will be used in the case where we can find an admissible function everywhere.

\begin{lemma}[$\zf+\acf$,~\cite{barto2021algebraic}, Proposition A.2]\label{q}
	Let $\mathcal{I}$ be a $k$-PAS as above. Let us assume that $k\geq k'+{k''\choose k'}\ell$ and for all $X\in {V\choose k'}$ there exists an $\ell$-admissible function with domain $X$. Then there exists a $k'$-PAS $\mathcal{I}'$ of value 1 and a $k''$-PAS $\mathcal{I}''$, a refinement of $\mathcal{I}$, so that $(\mathcal{I}'',\mathcal{I}')$ is consistent.
\end{lemma}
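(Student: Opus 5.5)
We would follow the proof of Proposition A.2 in~\cite{barto2021algebraic}, replacing the single place where a global choice function is used by an application of $\acf$ to a family of finite sets. First, for every $X\in {V\choose k'}$ let $\mathcal{F}_X$ be the set of all $\ell$-admissible functions with domain $X$. This is a nonempty subset of the finite set $\prescript{X\!}{}{n}$. Since $|\mathcal{F}_X|\leq n^{k'}$, the family $\{\mathcal{F}_X\}$ splits into finitely many subfamilies according to the size of $\mathcal{F}_X$. Applying $\ac(j)$ to each subfamily, we obtain a choice $X\mapsto f_X\in \mathcal{F}_X$. (Disjointness is harmless: we can tag each $\mathcal{F}_X$ with $X$.) We then set $\mathcal{I}'(X)\coloneqq\{f_X\}$, which is a $k'$-PAS of value $1$.

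Next we define $\mathcal{I}''$. Fix $U\in {V\choose k''}$. For each $X\in {U\choose k'}$, admissibility of $f_X$ gives a witness set $W$ of size $\ell$ such that every $k$-set containing $X\cup W$ carries an extension of $f_X$ in $\mathcal{I}$. The witnesses are not unique, so we choose them. The set of possible witnesses is $\{W\in {V\choose \ell}: W \text{ witnesses } f_X\}$. This set may be infinite, so we cannot apply $\acf$ directly to it. Instead we exploit that the relevant data is finite: we consider the finite set $T_U$ of all $k$-element sets of the form $U\cup\bigcup_{X} W_X$, padded to size exactly $k$. Here $k\geq k''+{k''\choose k'}\ell$ is not what is assumed, so we first check whether the inequality $k\geq k'+{k''\choose k'}\ell$ from the statement suffices. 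In the original argument, $\tilde U$ contains $U$ only through the union of the $X$'s with the witnesses. This family $T_U$ is infinite in general, and that is the main obstacle.

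To get around it, we would not pick $\tilde U$ at all. Instead we define $\mathcal{I}''(U)$ as one of the \emph{finitely many} possible sets $\mathcal{I}(\tilde U)|_U\subseteq \prescript{U\!}{}{n}$ arising from admissible choices of $\tilde U\supseteq U\cup\bigcup_X W_X$. The collection $R_U$ of such restrictions is a nonempty subset of $\power(\prescript{U\!}{}{n})$, hence finite of size at most $2^{n^{k''}}$. A second application of $\acf$, again split by cardinality, chooses $\mathcal{I}''(U)\in R_U$. By construction each value is $\mathcal{I}(\tilde U)|_U$ for some $\tilde U$, so $\mathcal{I}''$ refines $\mathcal{I}$. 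This is exactly the remark preceding the lemma: refinements are defined by picking restrictions rather than extension sets.

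Finally we verify consistency. Take $U\supseteq X$ with $|U|=k''$ and $|X|=k'$, and let $\tilde U$ be a set realising the chosen value $\mathcal{I}''(U)$. Since $\tilde U$ contains $X\cup W_X$, admissibility yields some $g\in\mathcal{I}(\tilde U)$ with $f_X\subseteq g$. Then $g|_U\in\mathcal{I}''(U)$ and $g|_X=f_X$, so $\mathcal{I}'(X)\cap \mathcal{I}''(U)|_X\neq\emptyset$, which is the required consistency. We therefore expect the main difficulty to be bookkeeping rather than mathematics. Specifically, we must make sure that $R_U$ is defined from the previously chosen $f_X$ without any further choices; this holds because the condition ``some witness family and some $\tilde U$ exist'' is first-order definable. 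We must also make sure the size bound guarantees that a suitable $\tilde U$ of size exactly $k$ exists.
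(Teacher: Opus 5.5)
Your proposal is correct and follows the paper's proof. You pick an $\ell$-admissible $f_X$ for each $X$ via $\acf$, and then, instead of choosing $\tilde U$ itself, you use $\acf$ again to select one of the finitely many restrictions $\mathcal{I}(\tilde U)|_U$, after which consistency is immediate. The size issue you raise but leave open is real. Since $\tilde U$ must contain $U$ together with up to ${k''\choose k'}$ witness sets of size $\ell$, the hypothesis must be read as $k\geq k''+{k''\choose k'}\ell$; this is the bound $k_i=k_i''+{k_i''\choose k_i'}\ell_i$ used when the lemma is applied in the proof of Lemma~\ref{pas3}, so the $k'$ in the statement is a typo.
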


\begin{proof}
	Define $\mathcal{I}'(X)=\{f\}$ where $f$ is an $\ell$-admissible with $\dom(f)=X$. This is possible assuming $\acf$ since we always have finitely many choices for the function $f$ on each $X\in {V\choose k'}$. We define $\mathcal{I}''$ as follows. Let $Y\in {V\choose k''}$ be arbitrary. Let $\nu: {Y\choose k'}\rightarrow {V\choose \ell}$ be a function such that $W=\nu(X)$ witnesses the $\ell$-admissibility of $\mathcal{I}'(X)$, and let us define $\ex(Y)$ to be any $k$-element subset of $V$ containing $Y\cup \bigcup_{X\in {Y\choose k'}}\nu(X)$. This is possible by our assumption on the values of $k,k',k''$ and $\ell$. Then we want to define $\mathcal{I}''(Y)\coloneqq \mathcal{I}(\ex(Y))|_X$. Again, without any choice axiom this is not necessarily possible. Note, however, that we always have finitely many choices for $\mathcal{I}''(Y)$ for any given $Y\in {V\choose k''}$, so under the assumption of $\acf$ the values of $\mathcal{I}''(Y)$ can be picked at the same time. The consistency of $(\mathcal{I}'',\mathcal{I}')$ is then clear from the definition.
\end{proof}

	Now we are ready to prove Theorem~\ref{pas2}. 
	In order to do this, we show the following more general lemma. We obtain Theorem~\ref{pas2} in the case when $d_0=d_1=\dots=d_r=d$.
	
\begin{lemma}[$\zf+\acf$]\label{pas3}
	For all $n,m,r,d_0,\dots,d_r\in \omega$ there exists a sequence $(k_0,\dots,k_r)\in \prescript{r+1\!}{}{\omega}$ such that for any consistent sequence $(\mathcal{I}_0,\dots,\mathcal{I}_r)$ of PASes over $n$ of arity $(k_0,\dots,k_r)$ with $\val(\mathcal{I}_j)\leq d_j$ there exists an $i\in \{0,1,\dots,r\}$ such that $\mathcal{I}_i$ has an $m$-solution. 
\end{lemma}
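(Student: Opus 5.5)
We prove Lemma~\ref{pas3} by induction, following the proof of Theorem 2.1 in~\cite{barto2021algebraic}. The induction parameter is the tuple $(d_0,\dots,d_r)$ and the length $r$, ordered lexicographically with the values taken in reverse order, so that decreasing a value or shortening the sequence counts as progress. The whole argument is a finite recursion on natural numbers. Choice enters only through Lemma~\ref{q} and through the formation of refinements, and in each case we pick one of finitely many options per finite subset of $V$, which $\acf$ permits.

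\textbf{Base cases.} If some $d_j=0$, then $\mathcal{I}_j(U)=\emptyset$ for all $U$. That PAS can never witness consistency, so we delete it and use the induction hypothesis for $r-1$; the $k$'s are padded using monotonicity. If $r=0$, consistency forces $\mathcal{I}_0$ to be nonempty with respect to itself. Here we take $k_0$ large and apply Lemma~\ref{p} with $X=\emptyset$-type arguments, or we note that a value-$1$ PAS yields an $m$-solution directly by choosing its unique assignments compatibly.

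\textbf{Inductive step.} Fix $n,m,r,d_0,\dots,d_r$. We choose the $k_i$ from the top down ($k_r$ first, then $k_{r-1}$, and so on), each much larger than the next. The bounds come from Lemma~\ref{p} ($k\ge n^{|X|}\ell+|X|$) and Lemma~\ref{q} ($k\ge k'+\binom{k''}{k'}\ell$), where the auxiliary $k'$, $k''$ are supplied by the induction hypothesis applied to the smaller parameter tuples that arise below.

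Given a consistent sequence, look at the last PAS $\mathcal{I}_r$ and its $\ell$-admissible functions for a suitable $\ell$. There are two cases.
\begin{enumerate}[(a)]
\item Every $X\in\binom{V}{k'}$ carries an $\ell$-admissible function. Then Lemma~\ref{q} produces a value-$1$ PAS $\mathcal{I}'$ together with a refinement $\mathcal{I}''$ of $\mathcal{I}_r$ such that $(\mathcal{I}'',\mathcal{I}')$ is consistent. Value $1$ strictly lowers the parameter, so either we obtain an $m$-solution of $\mathcal{I}'$, which by construction transfers to an $m$-solution of $\mathcal{I}_r$ via admissibility, or we reduce to a shorter or smaller consistent sequence.
\item Some $X$ has no $\ell$-admissible function. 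Lemma~\ref{p} still gives an $\ell$-obstacle $f$ with domain $X$. Restricting every $\mathcal{I}_i$ to extensions of $f$ (fixing the variables in $X$) produces new PASes on $V\setminus X$. Because $f$ is not admissible, the new $\mathcal{I}_r$ has strictly smaller value, while the obstacle property keeps the restricted sequence consistent after passing to refinements. These refinements must be chosen via $\acf$, since each $U$ admits finitely many candidate $\tilde U$-restrictions. The induction hypothesis then gives an $m$-solution of some restricted PAS, and adding back $f$ yields an $m$-solution of the corresponding $\mathcal{I}_i$.
\end{enumerate}

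\textbf{Expected main obstacle.} The key point is to preserve consistency when we replace each $\mathcal{I}_i$ by a refinement without an extension function $\ex$. The original proof chooses $\ex$ globally; here we may only pick, for each $U$, one of the finitely many sets $\mathcal{I}_i(\tilde U)|_U$. So consistency must be checked for every such pick. We would verify that the consistency witnesses in~\cite{barto2021algebraic} only use that each chosen value is \emph{some} restriction from a superset containing the relevant chain, and we would enlarge $k$'s so that any choice works. That is the delicate bookkeeping step.
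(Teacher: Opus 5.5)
Your overall plan matches the paper's: induction on the value bounds, a split on whether admissible functions exist everywhere, Lemma~\ref{q} in the admissible case and Lemma~\ref{p} otherwise. Your case (a) is fine. The genuine gap is case (b), which does not work as described.

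\textbf{Case (b) as you state it.} Conditioning on the obstacle $f$, i.e.\ keeping only the assignments that extend $f$, gives neither property you claim.
\begin{enumerate}[(i)]
\item The value need not drop usefully. Non-admissibility of $f$ for $\mathcal{I}_r$ says that for every $W$ there is some $U\supseteq X\cup W$ on which $\mathcal{I}_r(U)$ contains \emph{no} extension of $f$, so restricting there leaves nothing. On the sets $U$ supplied by the obstacle property, every element of $\mathcal{I}_r(U)$ may extend $f$, so the value need not drop at all.
\item Consistency is not inherited. The witness $g\in\mathcal{I}_j(U_j)\cap\mathcal{I}_i(U_i)|_{U_j}$ of a chain need not agree with $f$, and is lost. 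Your final step of adding $f$ back is therefore unsupported.
\end{enumerate}

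\textbf{What the paper does instead} (following~\cite{barto2021algebraic}). The roles are reversed. The obstacle is taken with respect to $\mathcal{I}_0$, the PAS of largest arity, with $k_0\geq |X|+n^{|X|}\ell_0$ so that $X$ and all witness sets fit. From $\mathcal{I}_0(U)$ one \emph{removes} the assignments $g$ with $g|_X=f$. The obstacle property supplies $U\supseteq X\cup W_0$ on which at least one assignment is removed, so $d_0$ drops. Non-admissibility is used for the \emph{other} PASes: for each $j\geq 1$ the refining set $U_j$ is chosen with $X_j\subseteq U_j$ and no element of $\mathcal{I}_j(U_j)$ agreeing with $f$ on $X_j$. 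Hence if $g\in\mathcal{I}_j(U_j)$ equals $g'|_{U_j}$ with $g'\in\mathcal{I}_0(U_0)$, then $g'|_X\neq f$, so $g'$ survives the removal.

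This forces a different case split. It must be ``some $i\in\{1,\dots,r\}$ has an $\ell_i$-admissible function on every $k_i'$-set'' versus ``every $i\in\{1,\dots,r\}$ has a bad set $X_i$'', with $X=\bigcup_i X_i$. Splitting on $\mathcal{I}_r$ alone leaves the witnesses between $\mathcal{I}_0$ and $\mathcal{I}_j$ for $0<j<r$ unprotected. The induction measure must then make both $(d_i,1)$ and $(d_0-1,d_1,\dots,d_r)$ smaller; the paper uses $r+d_0+2\sum_{i\geq 1}d_i$. Its base case is $r=d_0=d_1=1$ with $k_0=m$, $k_1=1$. Your $r=0$ case is vacuous, since a one-term sequence is never consistent once $|V|\geq k_0$.

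\textbf{The bookkeeping step.} The step you flag as delicate is not resolved by enlarging the $k_i$ so that ``any choice works''. Independently chosen restrictions are not consistent in general: if the set chosen for $Y_1$ is not contained in the set chosen for some $Y_0\supseteq Y_1$, consistency of the original sequence says nothing. The paper coordinates the choices by a backward recursion. For $Y\in\binom{V}{p_i}$:
\begin{enumerate}[(i)]
\item For each $Z\in\binom{Y}{p_{i+1}}$, pick some $\alpha_i(Z)\in\ex_{i+1}(Z)$.
\item Set $W_i=Y\cup\bigcup_Z\alpha_i(Z)$; this has size at most $\ell_i=p_i+\binom{p_i}{p_{i+1}}(k_{i+1}-p_{i+1})$.
\item Take $U_i\supseteq X_i\cup W_i$ with no element of $\mathcal{I}_i(U_i)$ agreeing with $f$ on $X_i$.
\item Use $\acf$ to choose one of the finitely many resulting values $\mathcal{J}_i(Y)=\mathcal{I}_i(U_i)|_Y$.
\item Let $\ex_i(Y)$ be the set of \emph{all} such $U_i$ that realise the chosen value through some $\alpha_i$.
\end{enumerate}
Any chain $Y_0\supseteq\dots\supseteq Y_r$ then lifts to a chain $U_0\supseteq\dots\supseteq U_r$ via $U_{i+1}=\alpha_i(Y_{i+1})$. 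The consistency of $(\mathcal{I}_0,\dots,\mathcal{I}_r)$ applies to this lifted chain, and combined with the removal argument above it yields consistency of $(\mathcal{J}_0,\dots,\mathcal{J}_r)$.
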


\begin{remark}
	Note that Theorem~\ref{pas2} applied with $d=\max(d_0,\dots,d_r)$ implies Lemma~\ref{pas3}, so Lemma~\ref{pas3} and Theorem~\ref{pas2} are in fact equivalent.
\end{remark}

\begin{proof}
	Our proof is again a reformulation of the proof of Theorem~\ref{pas} presented in~\cite{barto2021algebraic}.
	
	
	We will show that for all sequences $(d_0,\dots,d_r)$ if Lemma~\ref{pas3} holds for the sequences $(d_1,1),\dots,(d_r,1)$ and if $d_0\geq 1$ then it holds for the sequence $(d_0-1,\dots,d_r)$ then it holds for the sequence $(d_0,\dots,d_r)$ as well ($*$). This proves Lemma~\ref{pas3} by running an induction on $s\coloneqq r+d_0+2\sum_{i=1}^rd_i$.
	

	If $(\mathcal{I}_0,\dots,\mathcal{I}_r)$ is a consistent sequence then at least two different $\mathcal{I}_i$ must have a value at least 1. Thus, we can assume without loss of generality that at least 2 of the values $d_i$ are at least 1. Under this assumption the minimal value of $s$ is 4 which is only attained if $r=d_0=d_1=1$. In this case, we can put $k_0=m$ and $k_1=1$ since in this case the map $f\in \prescript{V\!}{}{n}$ defined by the union of the single functions in $\mathcal{I}_1$ provides an $m$-solution. From this point on we will assume that this is not the case, i.e., either $r\geq 2$ or $d_i\geq 2$ for some $i$. In this case $(*)$ indeed works as an induction step since the validity of the lemma is only assumed for tuples with smaller $s$ values. We will also assume that $d_0\geq 1$, otherwise we can remove the PAS $\mathcal{I}_0=\emptyset$.
	
	For $i\in \{1,\dots,r\}$ let $(k_i',k_i'')$ be a pair as in the conclusion of Lemma~\ref{pas2} applied to $(d_i,1)$, and let $(p_0,\dots,p_r)$ be a sequence that works for the tuple $(d_0-1,d_1,\dots,d_r)$.
	
	Next, we define the sequences $(k_0,\dots,k_r)$ and $(\ell_0,\dots,\ell_r)$ simultaneously and going backwards as follows. We write $k_{r+1}=p_{r+1}\coloneqq 0$.
	
\begin{itemize}
\item For $i$ equals $r,r-1,\dots,1,0$ we define $\ell_i\coloneqq p_i+{p_i\choose p_{i+1}}(k_{i+1}-p_{i+1})$ and
\item for $i$ equals $r,r-1,\dots,1$ we define $k_i\coloneqq k_i''+{k_i''\choose k_i'}\ell_i$.
\item Finally, we put $k_0\coloneqq \sum_{j=1}^rk_j'+n^{\sum_{j=1}^rk_j'}\ell_0$.
\end{itemize}

	We show that the sequence $(k_0,\dots,k_r)$ suffices.
	
	\emph{Case 1. There exists an $i\in \{1,\dots,r\}$ such that for all $X\in {V\choose k_i'}$ there exists some $f: X\rightarrow n$ which is $\ell_i$-admissible with respect to $\mathcal{I}_i$.} 

	Then by Lemma~\ref{q} there exist a $k'$-PAS $\mathcal{I}'$ of value 1, and a $k''$-PAS $\mathcal{I}''$ which is a refinement of $\mathcal{I}_i$ such that $(\mathcal{I}'',\mathcal{I}')$ is consistent. Since $\mathcal{I}''$ refines $\mathcal{I}_i$ it follows that $\val(\mathcal{I}'')\leq d_i$, therefore we can apply the induction hypothesis to the pair $(\mathcal{I}'',\mathcal{I}')$. We obtain that either $\mathcal{I}''$ or $\mathcal{I}'$ has an $m$-solution. If $\mathcal{I}'$ has an $m$-solution then it must be $h\coloneqq \bigcup\{f: \{f\}\in \mathcal{I'}\}$ and by consistency $h$ also must be an $m$-solution to $\mathcal{I}''$. So $\mathcal{I}''$ has an $m$-solution in either case, and since it is a refinement of $\mathcal{I}_i$, it follows that the same $m$-solution works for $\mathcal{I}_i$.
	
	\emph{Case 2. There exist $X_i\in {V\choose k_i'}: i\in \{1,\dots,r\}$ such that for all $i$ there is no $f:X_i\rightarrow n$ which is $\ell_i$-admissible with respect to $\mathcal{I}_i$.} Let $X\coloneqq \bigcup_{i=1}^rX_i$. Then by Lemma~\ref{p} we can find an $f:X\rightarrow n$ which is an $\ell_0$-obstacle for $\mathcal{I}_0$. Then we define $f_i\coloneqq f|_{X_i}$. By our assumption for all $i\in \{1,\dots,r\}$ the map $f_i$ is not $\ell_i$-admissible with respect to $\mathcal{I}_i$. We write $W\sqsubset_i U$ if $X_i\cup W\subseteq U$ and $g|_{X_i}\neq f_i$ for all $g\in \mathcal{I}_i(U)$. Since $f_i$ is not $\ell_i$-admissible this implies that for all $W$ the set $\{U: W\sqsubset_i U\}$ is nonempty.
	
	Now we define the sequences $(\mathcal{J}_1,\dots,\mathcal{J}_r)$ and $(\ex_0,\dots,\ex_r)$ where $\mathcal{J}_i$ is $p_i$-PAS which is a refinement of $\mathcal{I}_i$, and $\ex_i$ is a function from ${V\choose p_i}$ to $\power({V \choose k_i})$ that will consist of some specific witnesses of $\mathcal{J}_i$ being a refinement of $\mathcal{I}_i$. We point out that this is where our proof slightly diverges from the one presented in~\cite{barto2021algebraic} where first $\ex_i$ is defined as a function from ${V\choose p_i}$ to ${V\choose k_i}$ and $\mathcal{J}_i$ is defined as the refinement of $\mathcal{I}_i$ via $\ex_i$. However, in order to make this original argument work (with infinitely many variables) we would need to invoke $\ac$. Our argument avoids this at the cost of some additional technical difficulties (and $\acf$ is still needed).

	We first define the pairs $(\mathcal{J}_i,\ex_i)$ for $i\geq 1$ recursively and going backwards. We put $\ex_{r+1}\coloneqq \emptyset$.
	
	Assuming that $\ex_{i+1}$ is already defined, we define $\mathcal{J}_i: i\geq 1$ and $\ex_i$ as follows. Let $Y\in {V\choose p_i}$ be arbitrary. Then we pick some function $\alpha_i: {Y\choose p_{i+1}}\rightarrow {V\choose k_{i+1}}$ satisfying $\alpha_i(Z)\in \ex_{i+1}(Z)$ for all $Z\in {Y\choose p_{i+1}}$, and we define $$W_i\coloneqq Y\cup \bigcup_{Z\in {Y\choose p_{i+1}}}\alpha_i(Z).$$ Note that in this case $|W_i|\leq \ell_i$, so we can find some $U_i\in {V\choose k_i}$ such that $W_i\sqsubset_i U_i$. Now we want to define $\mathcal{J}_i(Y)$ to be $\mathcal{I}_i(U_i)|_Y$. This is possible to do simultaneously using $\acf$ since all $\mathcal{J}_i(Y)$ have finitely many possible options. More precisely, for a $Y\in {V\choose p_i}$ we say that a set $S$ of functions in $\prescript{Y\!}{}{n}$ is \emph{good} if $S=\mathcal{I}_i(U_i)|_Y$ for some $\alpha_i,W_i,U_i$ are as described above. Then for all such $Y$ we have finitely many good choices, and therefore using $\acf$ we can define $\mathcal{J}_i$ to be a choice function which assigns to each $Y\in {V\choose p_i}$ a good set for $Y$. Finally, we put a set $X\in {V\choose k_i}$ in $\ex_i(Y)$ if and only if there exist $\alpha_i,W_i$ and $U_i$ above so that $\mathcal{I}_i(U_i)|_Y=\mathcal{J}_i(Y)$ and $X=U_i$.
	
	Next, we define the $p_0$-PAS $\mathcal{J}_0$. Let $Y\in {V\choose p_0}$ be arbitrary. We define the map $\alpha_0$ and the set $W_0$ the same way as above, note that in this case $|W_0|=\ell_0$. Since $f$ is an $\ell_0$-obstacle with respect to $\mathcal{I}_0$ we can pick some $U\in {V\choose k_0}$ such that $X\cup W_0\subseteq U$ and $f=g|_{W_0}$ for some $g\in \mathcal{I}_0$. Then we want to define $\mathcal{J}_0(Y)$ to be $\{g|_Y: g\in\mathcal{I}_0(U): g|_X\neq f\}$. Again, by the same argument as above, this is possible to do simultaneously using $\acf$. Note that the definition of $\mathcal{J}_0$ immediately implies that it is a refinement of $\mathcal{I}_0$ and $\val(\mathcal{J}_0)\leq d_0-1$ (since at least one possible value is always removed from $\mathcal{I}_0$). Finally, we put $U$ in $\ex_0(Y)$ if and only if there exist some $\alpha_0$, $W_0$ and $g$ as above with $\{g|_Y: g\in\mathcal{I}_0(U): g|_X\neq f\}=\mathcal{J}_0(Y)$.
	
	We show that the sequence $(\mathcal{J}_0,\dots,\mathcal{J}_r)$ is consistent. Let $Y_0\supseteq \dots \supseteq Y_r$ be a sequence with $|Y_i|=p_i$. We have to show that $\mathcal{J}_i(Y_i)|_{Y_j}\cap \mathcal{J}_j(Y_j)\neq \emptyset$ for some $i<j$. Let $U_0$ be as in the definition of $\mathcal{J}_0(Y_0)$. Then we define a sequence $U_0\supseteq U_1\supseteq \dots \supseteq U_r$ of subsets of $V$ such that $|U_i|=k_i$ and $U_i\in \ex_i(Y_i)$ for all $i\geq 1$. Let us assume that $U_0,\dots,U_i$ are already defined. Since $U_i\in \ex_i(Y_i)$ we can find some maps $\alpha_i,W_i$ witnessing this in the definition of $\mathcal{J}_i$. Let $U_{i+1}\coloneqq \alpha_i(Y_{i+1})$. Then by definition $U_{i+1}\in \ex_{i+1}(Y_{i+1})$ and $U_{i+1}\subseteq W_i\subseteq U_i$. Let us also observe that since $U_i\in \ex_i(Y_i)$ we have $\mathcal{J}_i(Y_i)=\mathcal{I}_i(U_i)|_{Y_i}$. Since $U_0\supseteq U_1\supseteq \dots \supseteq U_r$ and by our assumption the original sequence $(\mathcal{I}_0,\mathcal{I}_r)$ is consistent, we obtain that there exist $1\leq i<j\leq r$ such that $\mathcal{I}_i(U_i)|_{U_j}\cap \mathcal{I}(U_j)\neq \emptyset$. Let $g$ be a function in this intersection. We show that $g|_{Y_j}\in \mathcal{J}_i(Y_i)|_{Y_j}\cap \mathcal{I}(Y_j)$. If $1\leq i$ this is immediate, so let us assume that $i=0$. Let $g'\in \mathcal{I}_0(U_0)$ such that $g'|_{U_j}=g$. By definition we know that $W_j\sqsubset U_j$ for some $W_j\in {V\choose \ell_j}$, in particular $g|_{X_j}\neq f_j$. This implies $g'|_X\neq f$, and thus $g'|_{Y_0}\in \mathcal{J}_0(Y_0)$. Considering that $(g'|_{Y_0})|_{Y_j}=g'|_{Y_j}=g|_{Y_j}$ we have $g|_{Y_j}\in \mathcal{J}_i(Y_i)|_{Y_j}\cap \mathcal{I}(Y_j)$.

	We have shown that $(\mathcal{J}_0,\dots,\mathcal{J}_r)$ is consistent which, by the induction hypothesis, implies that $\mathcal{J}_i$ has an $m$-solution for some $i$. By our construction, this also provides an $m$-solution to $(\mathcal{I}_0,\dots,\mathcal{I}_r)$ which finishes the induction step.
\end{proof}


	Now we turn into proving Theorem~\ref{dr_reduction}. In our proof we follow the construction described in the proof of Theorem 5.1 in~\cite{barto2021algebraic}. While the original construction was used to give a polynomial time reduction between the corresponding PCSPs, we can also use it to give a finite reduction. 

	In our reduction we need an intermediate promise template which requires the notion of free structures over minions.
	
\begin{definition}
	Let $\fs$ be a finite relational structure with $\dom(\fs)=n$, and let $\mathcal{M}$ be a minion. Then the \emph{free structure of $\mathcal{M}$ generated by $\fs$}, denoted by $\free_{\mathcal{M}}(\fs)$, is the structure similar to $\fs$ whose domain set is the set of $n$-ary functions in $\mathcal{M}$ and for a $k$-ary relational symbol $R$ in the signature of $\fs$ we define $R^{\free_{\mathcal{M}}(\fs)}$ to be the set of those $k$-tuples $(f_0,\dots,f_{k-1})$ from $\mathcal{M}^{(n)}$ such that there exists an $m$-ary function $g\in \mathcal{M}$ satisfying
	$$f_i(x_0,\dots,x_{n-1})=g(x_{r_0(i)},\dots,x_{r_{m-1}(i)})$$ where $m=|R^\fs|$ and $r_0,\dots,r_{m-1}$ are tuples enumerating the relation $R^{\fs}$.
\end{definition}

	We know that for any $\fs$ and $\mathcal{M}$ as above there exists a minion homomorphism from $\mathcal{M}$ to $\pol(\fs,\free_{\mathcal{M}}(\fs))$, see for instance the discussion after Definition 4.1 in~\cite{barto2021algebraic}. Thus, by Theorem~\ref{minions} we know that if $\mathcal{M}=\pol(\fa,\fb)$ and $\fa$ and $\fb$ are finite then $(\fa,\fb)$ pp-constructs $(\fs,\free_{\mathcal{M}}(\fs))$. Combining this with Theorem~\ref{construct_reduce}, we can conclude the following.

\begin{lemma}[$\zf$]\label{free_reduction}
	Let $\fa,\fb,\fs$ be finite structures with $\dom(\fs)\in \omega$, and let $\mathcal{M}\coloneqq \pol(\fa,\fb)$. Then $(\fs,\free_{\mathcal{M}}(\fs))$ finitely reduces to $(\fa,\fb)$.
\end{lemma}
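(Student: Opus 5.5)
The plan is to chain the results quoted just before the statement; no new construction should be needed. First, recall the claim from the discussion after Definition 4.1 in~\cite{barto2021algebraic}: there is a minion homomorphism $\xi\colon\mathcal{M}\to\pol(\fs,\free_{\mathcal{M}}(\fs))$. It sends a $k$-ary $t\in\mathcal{M}$ to the map
\[
\xi(t)\colon(a_0,\dots,a_{k-1})\mapsto t(x_{a_0},\dots,x_{a_{k-1}}),
\]
viewed as an $n$-ary member of $\mathcal{M}$ with $n=\dom(\fs)$. I would check briefly that this is a choice-free, explicit definition and that it is a homomorphism $\fs^k\to\free_{\mathcal{M}}(\fs)$.

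For the homomorphism property, take $(a^0,\dots,a^{j-1})\in R^{\fs^k}$, so each column lies in $R^{\fs}$. Consider the function $g$ of arity $m=|R^{\fs}|$ obtained from $t$ by sending the $i$th coordinate to the index of the $i$th column in the enumeration $r_0,\dots,r_{m-1}$. This $g$ witnesses that the tuple of images lies in $R^{\free_{\mathcal{M}}(\fs)}$. Compatibility with minors is immediate from the definition.

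Second, I would apply Theorem~\ref{minions} with $(\fa,\fb)$ and $(\fs,\free_{\mathcal{M}}(\fs))$. This needs $\free_{\mathcal{M}}(\fs)$ to be finite, which holds because $\mathcal{M}^{(n)}\subseteq\prescript{\prescript{n\!}{}{A}\!}{}{B}$ is finite. We also need $\fs\to\free_{\mathcal{M}}(\fs)$ so that the pair is a promise template; the image of any unary member of $\mathcal{M}$ under $\xi$ gives such a homomorphism. If $\mathcal{M}$ is empty, i.e.\ $\fa\nrightarrow\fb$, then there is nothing to prove, so we may assume $\mathcal{M}$ is nonempty. Theorem~\ref{minions} then gives that $(\fa,\fb)$ pp-constructs $(\fs,\free_{\mathcal{M}}(\fs))$.

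Third, Theorem~\ref{construct_reduce} turns the pp-construction into a finite reduction. The one point to watch is that we are working in $\zf$. Theorem~\ref{minions} is a statement about finite objects, so its proof is absolute and choice-free. Theorem~\ref{construct_reduce} was established via Lemmas~\ref{relax} and~\ref{ppp}, whose reductions $\Gamma$ are syntactic and likewise choice-free. The only real obstacle is therefore verifying the finiteness and nonemptiness hypotheses, and confirming that nothing in the cited equivalence uses choice on infinite objects; I expect both to be routine.
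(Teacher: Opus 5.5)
Your proposal is correct and takes the same route as the paper: the minion homomorphism $\mathcal{M}\to\pol(\fs,\free_{\mathcal{M}}(\fs))$, then Theorem~\ref{minions} to obtain a pp-construction, then Theorem~\ref{construct_reduce}. Your extra checks (the explicit formula for $\xi$, finiteness of $\free_{\mathcal{M}}(\fs)$, and $\fs\to\free_{\mathcal{M}}(\fs)$) fill in details the paper leaves implicit. Nonemptiness of $\mathcal{M}$ needs no case split, since $(\fa,\fb)$ is a promise template and any homomorphism $\fa\to\fb$ is already a unary polymorphism.
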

	
	Now we are ready to prove Theorem~\ref{dr_reduction}.

\begin{proof}
	Let $\mathcal{M}\coloneqq \pol(\fa,\fb)$. Then we construct an appropriate structure $\fs$ for which we can show that $(\fc,\fd)$ finitely reduces to $(\fs,\free_{\mathcal{M}}(\fs))$. By Lemma~\ref{free_reduction} this implies Theorem~\ref{dr_reduction}.
	
	By our assumption there exists a $(d,r)$-minion homomorphism $\xi$ from $\mathcal{M}$ to $\pol(\fc,\fd)$ for some $d,r\in \omega$. Let $k_0,\dots,k_r$ be as in the conclusion of Theorem~\ref{pas} applied in the case when $n=|\dom(\fd)|$ and $m$ is the maximal arity of the relations of $\fc$ (and thus also of $\fd)$. By Theorem~\ref{pas2} the existence of such numbers is provable in $\zf+\acf$. 
	
	We define the structure $\fs$ as follows. The domain set of $\fs$ is $S\coloneqq |C|^{k_0}$ and its relations are all partial functions from $S$ to $S$ (each of them represented by itself). Then we define an operation $\Gamma$ which maps instances of $(\fc,\fd)$ to $(\fs,\free_{\mathcal{M}}(\fs))$. Let $\ii$ be a structure similar to $\fc$. The domain set of $\Gamma(\ii)$ is defined as $\mathcal{X}\coloneqq \bigcup_{i=0}^r{V\choose k_i}$. For an $U\in \mathcal{X}$ we write $D_U$ for the set of homomorphisms from $\ii|_U$ to $\fc$. Then for each $U\in \mathcal{X}$ we pick some bijection $\sigma_U\colon D_U\rightarrow |D_U|$, note that $|D_U|\leq S$. This is possible to do simultaneously by $\acf$.  Then for all pairs $U,W\in \mathcal{X}$ with $W\subseteq U$ we put the pair $(U,W)$ in the relation $\{(\sigma_U(f),\sigma_W(f|_W)): f\in D_U\}$ in $\Gamma(\ii)$.
	
	We claim that $\Gamma$ gives a finite reduction from $(\fc,\fd)$ to $(\fs,\free_{\mathcal{M}}(\fs))$.
	
	Let us first observe that if $h: \ii\rightarrow\fc$ is a homomorphism then then $U\mapsto \sigma_U(h|_U)$ is a homomorphism from $\Gamma(\ii)$ to $\fs$. Next we check $\Gamma$ that satisfies item~\ref{it:def3} in Definition~\ref{def:fin} by using Lemma~\ref{lem:fin}. If $\fh$ is a finite substructure of $\Gamma(\ii)$ then we define $F\coloneqq \bigcup_{U\in H}U$, and we write $\ff$ for the substructure of $\ii$ induced on $F$. Then $\ff$ is finite, and $\fh$ is a substructure of $\Gamma(\ff)$. Therefore, the hypotheses of Lemma~\ref{lem:fin} hold, and thus $\Gamma$ satisfies item ~\ref{it:def3}.
	
	It remains to show that item~\ref{it:def2} in Definition~\ref{def:fin} holds for $\Gamma$, i.e, if $s\colon \Gamma(\ii)\rightarrow \free_{\mathcal{M}}(\fs)$ then $\ii\rightarrow\fd$. This essentially follows from the same argument as the one used for proving soundness in the proof of Theorem B.2 in~\cite{barto2021algebraic}. For the sake of completeness, we present this argument here as well. 

	We use the notation $\iota_m$ for the map $\id(m)\cup\,((S\setminus m)\times \{0\})$, i.e., the identity map on $m$ extended by the constant 0 map on $S\setminus m$. Let $U\in {V\choose k_i}$. We define the $|D_U|$-ary function $t(U)$ in $\mathcal{M}$ by $t(U)\coloneqq (s(U))_{\iota_{|D_U|}}$, i.e., $$t(U)(x_0,\dots,x_{|D_U|-1})=s(U)(x_0,\dots,x_{|D_U|-1},x_0,\dots,x_0).$$ We claim that if $W\subseteq U$ then $t(U)\xrightarrow{\pi_{U,W}}t(W)$ where $\pi_{U,W}: |D_U|\rightarrow |D_W|, m\mapsto \sigma_W((\sigma_U^{-1})(m)|_W)$. By definition $(U,W)$ is contained in the relation $\{(\sigma_U(f),\sigma_W(f|_W)): f\in D_U\}$ and $s$ is a homomorphism, it follows that there exists some $g\in \mathcal{M}$ of arity $|D_U|$ such that

\begin{itemize}
\item $s(U)(x_0,\dots,x_{S-1})=g(x_0,\dots,x_{|D_U|-1})$ and
\item $s(W)(x_0,\dots,x_{S-1})=g(x_{\pi_{U,W}(0)},\dots,x_{\pi_{U,W}(|D_U|-1)})$.
\end{itemize}

	This also implies that $s(U)$ only depends on the first $|D_U|$ and $s(W)$ only depends on the first $|D_W|$ coordinates. In particular, $g=t(U)$ and $g_{\pi_{U,W}}=t(W)$ which finishes the proof of our claim.
	
	We construct a sequence $(\mathcal{I}_0,\dots,\mathcal{I}_r)$ of PASes of arities $k_0,\dots,k_r$ on $V=\dom(\ii)$ as follows. Let $U\in {V\choose k_i}$. Then for a $g\in \xi(t(U))$ we define $$Z_U(g): U\rightarrow D, u\mapsto g(\sigma^{-1}_U(0)(u),\dots,\sigma^{-1}_U(|D_U|-1)(u)),$$ and we put $$\mathcal{I}_i(U)\coloneqq \{Z_U(g): g\in \xi(t(U))\}.$$ It is clear from the definition that all $\mathcal{I}_i$ has value at most $d_i$. Moreover, since $\xi(t(U))\in \pol(\fc,\fd)$ it follows that all elements of $\mathcal{I}_i$ are partial homomorphisms to $\fd$. We now show that the sequence $(\mathcal{I}_0,\dots,\mathcal{I}_r)$ is consistent. Let $V\supseteq U_0\supseteq \dots \supseteq U_r$ with $|U_i|=k_i$. Then, as we have seen, $$t(U_0)\xrightarrow{\pi_{U_0,U_1}}t(U_1)\xrightarrow{\pi_{U_1,U_2}}\dots \xrightarrow{\pi_{U_{r-1},U_r}}t(U_r)$$ is a sequence of minors in $\mathcal{M}$. Since $\xi$ is a $(d,r)$-minion homomorphism, it follows that there exist $i<j,g\in \xi(t(U_i)),h\in \xi(h(U_j))$ such that $g\xrightarrow{\pi_{U_i,U_j}} h$. Then for all $u\in U_j$ we have
\begin{align*}
Z_{U_j}(h)(u)=&h(\sigma^{-1}(0)(u),\dots,\sigma^{-1}(|D_W|-1)(u))\\=&g_{\pi(U_i,U_j)}(\sigma^{-1}(0)(u),\dots,\sigma^{-1}(|D_W|-1)(u))\\=&g(\sigma^{-1}(0)(u),\dots,\sigma^{-1}(|D_{U_i}|-1)(u))=Z_{U_i}(g)(u).
\end{align*}
	
	This means that $Z_{U_i}(g)|_{U_j}=Z_{U_j}(h)$, and thus $\mathcal{I}_i|_{U_j}\cap \mathcal{I}_j\neq \emptyset$ which shows that $(\mathcal{I}_0,\dots,\mathcal{I}_r)$ is consistent.
	
	By the choice of $k_0,\dots,k_r$ we can conclude that $\mathcal{I}_i$ has an $m$-solution $h$ for some $i$. Since $\mathcal{I}_i$ consist of partial homomorphisms to $\fd$ it follows that the restriction of $h$ to every at most $m$-element substructure of $\ii$ is a homomorphisms to $\fd$. Since all relations of $\fd$ has arity at most $m$ this means that $h$ is in fact a homomorphism from $\ii$ to $\fd$.
\end{proof}

\section{Conclusion and open problems}\label{sect:open}

	As we have seen, a lot of analogies can be drawn between reduction between promise CSPs and the reduction between the corresponding compactness principles, and we have shown that for several promise templates which are known to have hard CSPs, the corresponding compactness principles are as strong as possible, i.e., they are equivalent to the ultrafilter lemma. It would be interesting to see how far these analogies can be pushed, for example one can ask for any given promise template $(\fa,\fb)$ for which $\pcsp(\fa,\fb)$ is known or conjectured to be \npc, whether $K_{(\fa,\fb)}$ is still equivalent to the ultrafilter lemma. 
	
\begin{question}
	Is $K_{(K_3,K_6)}$ equivalent to $\ufl$ over $\zf$?
\end{question}

	More generally, we can make the following conjecture.
	
\begin{conjecture}\label{conj:promise}
	For all $3\leq c\in \omega$ the compactness principle $K_{(K_3,K_c)}$ is equivalent to $\ufl$ over $\zf$.
\end{conjecture}

	
	It could also be worthwhile to look at some of the other methods for proving hardness of PCSPs in some of the papers mentioned in the introduction (\cite{krokhin2023topology,avvakumov2025hardness,austrin20172+varepsilon,filakovsky2023hardness}), to see if they can be adapted in the context of compactness principles.
	
	We can also turn around the questions above, and ask whether there exists any promise template $(\fa,\fb)$ which has a tractable PCSP but $K_{(\fa,\fb)}$ still implies $\ufl$. Note that by the results of~\cite{katay2023csp} we know that this is impossible in the case when $\fa=\fb$. A promise template $(\fa,\fb)$ is called \emph{finitely tractable} if there exists some finite, not omniexpressive structure $\fc$ such that $(\fa,\fb)$ is a homomorphic relaxation of $(\fc,\fc)$. Note that if $(\fa,\fb)$ is finitely tractable then it follows from the finite-domain CSP dichotomy theorem~\cite{BulatovFVConjecture,zhuk2020proof} that $\pcsp(\fa,\fb)$ is tractable. Moreover, in this case $K_{(\fa,\fb)}$ is true in a model of $\zf$ as in Theorem~\ref{thm:ktv}, in particular it does not imply $\ufl$. The prototypical example for a promise template which is not finitely tractable, but has a tractable PCSP is $(T,H_2)$ where $T$ is the template structure for 1-in-3-SAT, i.e., $\dom(T)=2$, and its only (ternary) relation is $\{(0,0,1),(0,1,0),(1,0,0)\}$\cite{brakensiek2018promise,barto2021algebraic}.
	
\begin{question}
	Is $K_{(T,H_2)}$ equivalent to $\ufl$ over $\zf$?
\end{question}
	
	A slightly unsatisfactory aspect of the results of Section~\ref{sect:weak} is that in order to make the general reductions work we need to invoke $\acf$. This motivates the following question.
	
\begin{question}
	Is Theorem~\ref{dr_reduction} or Theorem~\ref{pas} provable in $\zf$?
\end{question}

	Finally, one can ask whether the converse of any of the main results of Section~\ref{sect:ac_fin} holds.
	
\begin{question}
	Does there exist a promise template $(\fa,\fb)$ which has a cyclic polymorphism of arity $p$ for some prime $p$ and $K_{(\fa,\fb)}$ implies $\kw(p)$ over $\zf$.
\end{question}

\begin{question}\label{no_cyc}
	Does there exist a promise template $(\fa,\fb)$ which has a cyclic polymorphism and $K_{(\fa,\fb)}$ implies $\acf$ over $\zf$?
\end{question}


\bibliographystyle{alpha}
\bibliography{local.bib}

\end{document}